\documentclass[12pt, reqno]{amsart}
\usepackage{amsmath, amsthm, amscd, amsfonts, amssymb, graphicx, xcolor}
\usepackage[bookmarksnumbered, colorlinks, plainpages]{hyperref}
\usepackage{mathrsfs}
\usepackage{tikz}
\usetikzlibrary{arrows.meta, calc, decorations.pathreplacing, positioning}

\newtheorem{theorem}{Theorem}[section]
\newtheorem{lemma}[theorem]{Lemma}

\newtheorem{proposition}[theorem]{Proposition}
\newtheorem{corollary}[theorem]{Corollary}
\theoremstyle{definition}
\newtheorem{definition}[theorem]{Definition}

\newtheorem{question}[theorem]{Question}
\theoremstyle{remark}
\newtheorem{remark}[theorem]{Remark}
\numberwithin{equation}{section}

\begin{document}
\setcounter{page}{1}

\newcommand{\ywy}[1]{{\color{red}{#1}}}

\centerline{}

\centerline{}

\title[Geometric Infiniteness in Gromov Hyperbolic Spaces]{Two Characterizations of Geometrically Infinite Actions on Gromov Hyperbolic Spaces}

\author{Chaodong Yang}	
 \address{Beijing International Center for Mathematical Research\\
Peking University\\
 Beijing 100871, China
P.R.}
 \email{be2sio4@stu.pku.edu.cn}
 
\author{Wenyuan Yang}

%Address of record for the research reported here
\address{Beijing International Center for Mathematical Research\\
Peking University\\
 Beijing 100871, China
P.R.}
\email{wyang@math.pku.edu.cn}

\date{February 25th 2026}

\keywords{Gromov hyperbolic space, geometrical finiteness, convergence group action}

\begin{abstract}
    We provide two new characterizations of geometrically infinite actions on Gromov hyperbolic spaces: one in terms of the existence of escaping geodesics, and the other via the presence of uncountably many non-conical limit points. These results extend corresponding theorems of Bonahon, Bishop, and Kapovich--Liu from the settings of Kleinian groups and pinched negatively curved manifolds to discrete groups acting properly on proper Gromov hyperbolic spaces.
\end{abstract}
%\dedicatory{This paper is dedicated to Professor ABCD}

\thanks{(W.Y.) Partially supported by National Key R \& D Program of China (2025YFA1017500) and  National Natural Science Foundation of China (No. 12131009 and No.12326601)}

\maketitle
\section{Introduction}

Let $X$ be a proper Gromov hyperbolic space, on which a countable group $G$ acts properly by isometries. The action naturally extends to a convergence group action on the Gromov boundary $\partial X$, which decomposes $\partial X$ into  the \textit{limit set} $\Lambda G$ and the  domain of discontinuity. Let $\Pi \subset \Lambda G$ be the set of parabolic points.  The action of $G$ on $X$ is called \textit{geometrically finite} if there exists a sufficiently separated $G$-invariant family of horoballs $\{B_p:p\in \Pi\}$  so that the action on the complement $\mathrm{CH}(\Lambda G)\setminus \bigcup_{p\in \Pi} B_p$ is cocompact, where $\mathrm{CH}(\Lambda G)$ denotes the weak convex hull of $\Lambda G$ in $X$.  In this case, $\Lambda G\setminus \Pi$ consists precisely of \textit{conical points}. We refer to Section \ref{preliminary} for precise definitions.

Historically, the notion of geometric finiteness originated in the theory of Kleinian groups (see \cite{BeaMas,Marden,thurstonnotes}) and has since been extended to the much broader class of groups considered here in a series of works \cite{Bow3,Bow4,bowditch2012relatively}. Since the formal introduction in \cite{Gromov}, this class of groups has been intensively studied in geometric group theory under the name of \textit{relatively hyperbolic groups}, encompassing many examples of geometric and algebraic origin. Several equivalent characterizations of relative hyperbolicity are then proposed in the literature, generalizing and going well beyond those known for Kleinian groups; see, for example, \cite{Farb, bowditch2012relatively, Osin, DruSapir, Ge1}, as well as \cite{Hru} for an overview and references therein. 

The aim of this paper is to supplement these descriptions with two new characterizations in the present general setting.  %We formulate it in terms of geometrical infiniteness.

Let $\{g_n\}_{n\ge 1}$ be a sequence of hyperbolic elements in $G$. We say that $\{g_n\}_{n\ge 1}$ is \textit{escaping} if for any point $o$ and any $R>0$, the $R$-neighborhood of $Go$ intersects the axes of only finitely many $g_n$. 

\begin{theorem}\label{geometricallyFiniteNearOrbit}
    Suppose that a group $G$ admits a non-elementary proper action on a proper Gromov hyperbolic space $X$. Then the action is geometrically infinite if and only if there exists an escaping sequence of hyperbolic elements in $G$.
\end{theorem}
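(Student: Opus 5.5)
The plan is to prove the two directions separately. Throughout I use the standard fact that geometric finiteness is equivalent to every limit point being either conical or bounded parabolic (Bowditch). I also use that, after removing a suitable $G$-invariant family of disjoint horoballs $\{B_p\}$ centred at parabolic points, geometric finiteness amounts to the thick part $\mathrm{CH}(\Lambda G)\setminus\bigcup_p B_p$ lying in a bounded neighbourhood $N_R(Go)$ of the orbit.

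\emph{Geometrically finite $\Rightarrow$ no escaping sequence.} Let $g$ be hyperbolic. Its axis (a quasi-geodesic line between its fixed points $g^{\pm}\in\Lambda G$) lies in a uniform neighbourhood of $\mathrm{CH}(\Lambda G)$. It cannot be contained in a single horoball $B_p$, since its endpoints are not both equal to $p$. By quasi-convexity it enters and exits horoballs and must spend time in the thick part. More precisely, a bi-infinite quasi-geodesic with endpoints in $\Lambda G$ and distinct from a single parabolic point meets the thick part, which is contained in $N_R(Go)$. Since $R$ does not depend on $g$, every axis meets $N_{R'}(Go)$ for a uniform $R'$, and no sequence of hyperbolic elements is escaping.

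\emph{Geometrically infinite $\Rightarrow$ escaping sequence.} By the characterization above, there is a limit point $\xi$ which is neither conical nor bounded parabolic. The idea is to approximate a geodesic ray $\gamma$ from $o$ to $\xi$ by axes of hyperbolic elements, with axes chosen to lie far from $Go$. I would split into two cases.

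Case one: for every $R$ the set $\mathrm{CH}(\Lambda G)\setminus N_R(Go)$ contains points outside all horoballs, i.e. the thick part is not cobounded. Then pick points $x_n$ in the hull at distance $\ge n$ from $Go$. Since $x_n$ lies on a geodesic between two limit points, and hyperbolic fixed-point pairs are dense in $\Lambda G\times\Lambda G$ (non-elementary convergence action), there is a hyperbolic $g_n$ whose axis passes within a bounded distance $\delta'$ of $x_n$. The set of such $g_n$ is infinite.

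Escaping requires more: each fixed $N_R(Go)$ must meet only finitely many axes. So I must choose the axis to be locally near $x_n$ but globally avoid $N_n(Go)$ as much as possible. That is impossible, because endpoints are limit points and the axis must come close to the orbit near infinity? Not necessarily: the orbit accumulates on $\Lambda G$ only in the weak sense. I would instead aim for axes entirely contained in $X\setminus N_{n}(Go)$. To find them, use the loxodromic element structure: take a point $x$ deep in the hull far from $Go$, together with limit points $a,b$ whose geodesic $[a,b]$ passes through $x$ and stays far from $Go$ on a long segment. Then take $g$ with fixed points near $a,b$ and show the axis stays close to $[a,b]$ on long segments.

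The main obstacle is this global control. A cleaner route, which I would try first, goes through the uncountably-many-non-conical-points characterization. One shows that the set $\Lambda_{\rm far}$ of limit points $\eta$ such that some geodesic line ending at $\eta$ eventually leaves every $N_R(Go)$ is nonempty and $G$-invariant. Then one picks pairs $(a_n,b_n)$ of such points with $d([a_n,b_n],Go)\to\infty$, which is possible by a diagonal argument using properness of $X$. Finally one perturbs each pair to hyperbolic fixed points, keeping the line $\ge n$ away from $o$'s orbit using thin triangles: nearby endpoints give lines that $\delta$-fellow-travel on all but neighbourhoods of the endpoints, and near the endpoints both lines are already far from $Go$.

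The final check is that the sequence is escaping. This holds because for fixed $R$ only $n\le R+C$ have axes meeting $N_R(Go)$.
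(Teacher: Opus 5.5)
Your first direction (geometrically finite $\Rightarrow$ no escaping sequence) matches the paper's argument and is fine. The converse carries all the content of the theorem, and there the proposal has a genuine gap.

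Case one presupposes a $G$-invariant family of horoballs, which a geometrically infinite action does not come with. As you concede, it also gives no global control over the axes.

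In the ``cleaner route'' three steps fail. First, invoking Theorem~\ref{geometricallyFiniteNonconicalCountable} would be circular, since the paper deduces it from the present theorem. Second, your $\Lambda_{\mathrm{far}}$ is just $\Lambda^{nc}G$: a limit point is non-conical exactly when a ray to it eventually leaves every $N_R(Go)$. You claim one can pick $a_n,b_n\in\Lambda_{\mathrm{far}}$ with $d([a_n,b_n],Go)\to\infty$ ``by a diagonal argument using properness''. This is exactly where the failure of bounded parabolicity must be used, yet nothing in your argument uses it. Without that input the claim is false: for a geometrically finite action with cusps, $\Lambda_{\mathrm{far}}$ is the nonempty $G$-invariant set of parabolic points, and every line joining two of them crosses the thick part, hence meets a fixed neighbourhood of $Go$. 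Third, the perturbation step is unjustified. Lines with visually close endpoints fellow-travel only on a bounded region around $o$. Beyond it the new line runs toward fixed points of a hyperbolic element, which are conical, so ``near the endpoints both lines are already far from $Go$'' does not follow. To rescue it you would need two things: the coarse $\tau(g)$-periodicity of $d(\cdot,Go)$ along an axis, and fellow-travelling along a segment longer than $\tau(g)$ plus a constant. That requires control of translation lengths, which the proposal does not provide.

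The missing idea is to argue contrapositively, as the paper does. Assume a uniform $R_0$ such that every hyperbolic element has an axis meeting $N_{R_0}(Go)$; this is equivalent to the absence of an escaping sequence. Then prove every non-conical limit point is bounded parabolic, in four steps.

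\begin{enumerate}
\item With properness, this assumption gives finitely many conjugacy classes of hyperbolic elements of translation length below any bound (Lemma~\ref{stableTranslationLength}).
\item For $\xi\in\Lambda^{nc}G$ and hyperbolic $g$ one gets a uniform bound $d([\xi,g\xi],Go)\le R_2$ (Lemmas~\ref{translationLengthLong}--\ref{translationNearOrbit}). When $\tau(g)$ is large, $[\xi,g\xi]$ fellow-travels the axis along a segment of length about $\tau(g)$, so it passes near an orbit point on the axis. The finitely many short conjugacy classes are handled by compactness of a fundamental domain for $\langle g\rangle$ on $\partial X\setminus\{g^\pm\}$.
\item Proposition~\ref{lineNearOrbit} extends the bound to all $g$ with $g\xi\ne\xi$. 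Suppose $d([\xi,g_n\xi],Go)\to\infty$. Non-conicality forces $g_n\to\xi$ locally uniformly off some point $\eta$, and Lemma~\ref{producingHyperbolicElements} makes $g_ng_N$ hyperbolic for large $n$. The $2\delta$-thin triangle $\triangle(\xi,g_n\xi,g_ng_N\xi)$ then transfers the bound $R_2$ on $[\xi,g_ng_N\xi]$ to $[\xi,g_n\xi]$ or to $g_n[\xi,g_N\xi]$. This is a contradiction for $n,N$ large.
\item The sets $B_{\xi,R}$ have pairwise disjoint translates once $R>R_\xi+2\delta$. Hence compactness of $Y_R/G$ makes $G_\xi$ act cocompactly on $S_{\xi,R}=B_{\xi,R}\cap Y_R$, which yields that $\xi$ is bounded parabolic.
\end{enumerate}

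So the paper never constructs escaping axes from a point that fails to be bounded parabolic. It builds the cusp structure out of the orbit-distance bound, and a direct approach like yours would in effect have to reproduce this.
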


The corresponding result for Kleinian groups was proved by Bonahon \cite{bonahon-bouts} and was recently generalized to pinched negatively curved manifolds by Kapovich and Liu \cite{KL19}. In contrast to these works, our approach relies only on elementary arguments in hyperbolic geometry and does not invoke the Margulis lemma.
In detail, we define a region that acts as a horoball (see Section~\ref{horoballConstruction}) and use Lemma~\ref{producingHyperbolicElements} to directly obtain a hyperbolic element of word length $2$, thus circumventing the essential obstacle in previous works.

Following the strategy of Bishop \cite{Bishop96} for Kleinian groups, we derive the second characterization of geometric infiniteness from Theorem \ref{geometricallyFiniteNearOrbit}. This refines the result of Beardon and Maskit \cite{BeaMas} in terms of non-conical limit sets. 

\begin{theorem}\label{geometricallyFiniteNonconicalCountable}
    Suppose that a group $G$ admits a non-elementary proper action on a proper Gromov hyperbolic space $X$. Then the action is geometrically infinite if and only if the set of non-conical points is uncountable.
\end{theorem}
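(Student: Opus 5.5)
We prove the two directions separately.

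\emph{Geometrically finite implies countably many non-conical points.} If the action is geometrically finite, then $\Lambda G\setminus\Pi$ consists of conical points, as recalled above. So the non-conical points lie in $\Pi$. Since $G$ is countable and there are finitely many $G$-orbits of horoballs (equivalently, every $p\in\Pi$ is fixed by some parabolic element, and each infinite-order element fixes at most two boundary points), $\Pi$ is countable. Hence the set of non-conical points is countable.

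\emph{Geometrically infinite implies uncountably many non-conical points.} Following Bishop, we start from Theorem~\ref{geometricallyFiniteNearOrbit}. It gives an escaping sequence $\{g_n\}$ of hyperbolic elements. Fix a basepoint $o$ in the weak convex hull. By passing to a subsequence we may assume that the axis $\gamma_n$ of $g_n$ stays outside $N_{R_n}(Go)$ with $R_n\to\infty$ as fast as we like. Translating each $g_n$ by an element of $G$ (which keeps the sequence escaping), we may also assume that the point of $\gamma_n$ nearest to $o$ is at distance about $d_n=d(Go,\gamma_n)$ from $o$.

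We build a Cantor set of limit points by a tree of quasi-geodesic concatenations. A binary sequence $\omega\in\{0,1\}^{\mathbb N}$ records, at stage $k$, a choice between two distinct elements of $G$. Concretely, at stage $k$ we travel from the current orbit point $h_{k-1}o$ out to the translated axis $h_{k-1}\gamma_{n_k}$. We then run along it by a large power $g_{n_k}^{m}$, with $m$ chosen from two values (or toward one of two endpoints), and come back to the orbit point $h_k o$, where $h_k = h_{k-1}\,u\,g_{n_k}^{m}\,u^{-1}$ for a connecting element $u$. With $n_k$ and the powers growing fast enough, the resulting piecewise geodesic paths are uniform local quasi-geodesics, and the local-to-global principle in hyperbolic spaces upgrades them to global quasi-geodesics. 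Each path therefore converges to a limit point $\xi_\omega\in\Lambda G$. Different $\omega$ produce paths that diverge at a definite branching point, so $\omega\mapsto\xi_\omega$ is injective, and we get uncountably many limit points.

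Next we check that $\xi_\omega$ is not conical. A quasi-geodesic ray from $o$ to $\xi_\omega$ is uniformly close to the concatenation. Along the $k$-th axis segment it stays at distance about $R_{n_k}\to\infty$ from $Go$, while it returns close to $Go$ at the junctions. Being conical would require the ray to come within a bounded distance $C$ of $Go$ infinitely often near points tending to $\xi_\omega$; the junction returns are exactly what must be avoided. So we modify the construction: instead of returning to the orbit, we pass directly from one long axis segment to the next through regions far from $Go$. Escaping alone may not permit this. Alternatively we keep the returns but arrange that every point of the ray beyond the $k$-th junction is at distance $\ge k$ from $Go$. This is doable if the junction points are chosen as points on the axes (not orbit points), using the connecting geodesics between far-apart axes $h\gamma_{n_k}$ and $h'\gamma_{n_{k+1}}$, which themselves stay far from $Go$ by thin triangles.

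The main obstacle is this non-conicality control: all the rays produced must eventually leave every bounded neighborhood of $Go$, uniformly along a tail. What I would try first is Bishop's trick: make the transition between consecutive axes occur near a far point of the previous axis. By $\delta$-thinness, the geodesic from a far point of $\gamma_{n_k}$ to a far point of $\gamma_{n_{k+1}}$ stays within $\delta$ of the union of those axes and the short segment joining them. That segment sits deep in the complement of $N_{R}(Go)$ once the escaping rate dominates the connecting distances. Finally, we remove the countably many parabolic points (which might coincide with some $\xi_\omega$). This leaves uncountably many non-conical limit points.
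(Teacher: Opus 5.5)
Your first direction is fine and matches the paper. One caveat: your parenthetical claim that every $p\in\Pi$ is fixed by a parabolic element can fail when $G_p$ is an infinite torsion group, but finitely many orbits of horoballs, or pairwise disjoint horoballs in the separable space $X$, already give countability.

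In the other direction, the construction you finally settle on is the paper's: axis segments joined by bridges between consecutive axes. However, the step you yourself call the main obstacle is asserted rather than proved, and the reasons you give do not work. The bridge $[x_n,y_{n+1}]$ realizing $d(\gamma_n,\gamma_{n+1})$ is not short. Its length is at least about $d(\gamma_{n+1},Go)-d(\gamma_n,Go)-\tau(g_n)$, so faster escape does not make it negligible. What decides whether it stays far from $Go$ is how far $d(\gamma_n,\gamma_{n+1})$ falls below $d(\gamma_n,Go)+d(\gamma_{n+1},Go)$. That deficit is governed by the Gromov product $\langle p_n,p_{n+1}\rangle_o$ of the nearest-point projections $p_n\in\pi_{\gamma_n}(o)$, and escaping does not control it. If $\langle p_n,p_{n+1}\rangle_o$ stays bounded, then $[p_n,p_{n+1}]$ passes near $o$. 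By thinness of the quadrilateral $(p_n,x_n,y_{n+1},p_{n+1})$, with $\gamma_n,\gamma_{n+1}$ far from $o$, the bridge passes near $o$ as well. After translation the ray then returns near $Go$ at every junction, and its endpoint is conical. Your first version (returning to $h_ko$) is conical for the same reason, and the ``keep the returns'' alternative is self-contradictory. Removing parabolic points at the end is beside the point, since parabolic points are non-conical anyway.

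The missing idea is to control directions. Keep the normalization $d(\gamma_n,o)=d(\gamma_n,Go)$, which you already have, and pass to a subsequence with $g_n^+\to\xi\in\partial X$. Since $d(o,[p_n,g_n^+])\ge d(o,\gamma_n)\to\infty$, also $p_n\to\xi$, hence $\langle p_n,p_{n+1}\rangle_o\to\infty$. You must then turn distance from $o$ into distance from all of $Go$. The paper shows $d(x,Go)\ge d(x,o)-2\delta$ for $x\in[p_n,p_{n+1}]$, using that $o$ is a closest orbit point to $p_n$ and $p_{n+1}$, and combines this with Lemma~\ref{fourPoints} to obtain Lemma~\ref{qnFarAway}. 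Alternatively, for $z$ on the bridge and $g\in G$, the triangle inequality together with the normalization gives $2d(z,go)\ge d(\gamma_n,Go)+d(\gamma_{n+1},Go)-d(x_n,y_{n+1})\ge 2\langle p_n,p_{n+1}\rangle_o$, since $d(x_n,y_{n+1})\le d(p_n,p_{n+1})$.

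Two further steps also need genuine arguments. First, the local quasi-geodesic property at the junctions uses that translates of distinct axes are far apart (Lemma~\ref{contractingSystem}) and that the bridges are attached at closest points. Second, injectivity does not follow merely from paths ``diverging at a branching point.'' The paper shows that the endpoint projects to the relevant translated axis near the departure point (Lemma~\ref{projectiveNear}), and it imposes $|k_n-k_n'|\tau(g_n)>4D_0+600\delta$.
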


We remark that the Kleinain group case was done by Bishop \cite{Bishop96} and the pinched negatively curved manifold case was established by Kapovich and Liu \cite{KL20} after their work \cite{KL19} on Theorem \ref{geometricallyFiniteNearOrbit}. To put this into perspective, for finitely generated geometrically infinite Kleinian groups, the non‑conical limit set has maximal Hausdorff dimension; its precise value was computed by Bishop–Jones \cite{BJ97bddgeo}, subsequent contributions by \cite{KL20}, and was recently completed in the full general case by \cite{mj2025hausdorff}. The question of whether Theorem~\ref{geometricallyFiniteNonconicalCountable} holds at this level of generality arose naturally in the course of that study.

\begin{remark}
    As a direct corollary of Theorem~\ref{geometricallyFiniteNonconicalCountable}, such an action is geometrically finite if and only if every point of $\Lambda G$ is either conical or parabolic. That is, we can remove the \textit{bounded} assumption on parabolic points. See Definition \ref{defn:limitpoints}.  
\end{remark}

Finally, let us conclude the Introduction with an open question.  
By a theorem of Yaman \cite{Yaman} (and of Bowditch \cite{Bow98} in the cocompact case), geometrically finite actions on Gromov hyperbolic spaces can be recovered from the dynamics on the boundary. More precisely, if a convergence group action is geometrically finite (i.e., its limit set consists only of conical points and bounded parabolic points) then the group admits a geometrically finite action on some proper Gromov hyperbolic space. Moreover, the induced action on the boundary coincides with the original convergence group action. In the same sprint of those works, one can ask  whether the analogue of Theorem \ref{geometricallyFiniteNonconicalCountable} holds in a purely dynamical term of convergence groups. 

\begin{question}\label{geometricallyFiniteConvergenceGroup}
    Suppose that a discrete group $G$ admits a convergence group action on a compact metrizable space with  countably many non-conical limit points. Does $G$ admit a geometrically finite action on some Gromov hyperbolic space?
\end{question}
 
\begin{remark}

\begin{enumerate}
\item
This is related to a question of Kapovich \cite[Problem 7]{Kap05} asking whether every convergence group action can be realized as the boundary action of a proper action on some Gromov hyperbolic space. If so, then a positive answer would follow from Theorem~\ref{geometricallyFiniteNonconicalCountable} together with Yaman’s theorem.
It is also deserved mentioning that in \cite{sun2019dynamicalcharacterizationacylindricallyhyperbolic}, Sun proved that the action constructed in \cite{Bow98} on a hyperbolic space contains WPD (i.e., weak proper discontinuity) elements.  

\item The assumption that the action is a convergence group action is necessary. It is easy to give examples of \emph{non-relatively hyperbolic} groups $G$ which admit a minimal action on a compact metrizable space $M$ such that all but {countably many points} $\xi\in M$ are \emph{conical points} in the usual sense.
\begin{itemize}
    \item  There exist a sequence $\{g_n\}$ in $G$ and distinct points $a, b \in M$ such that $g_n x \to b$ locally uniformly for all $x \in M \setminus \{\xi\}$, while $g_n \xi \to a$.  
\end{itemize}
Indeed, any non-elementary action on a locally infinite tree $T$ gives rise to the desired action on the compact space $M=\partial T\cup T^0$, the union of the ends of the tree and the vertex set. We equip $M$ with Bowditch topology  (since $T$ is a fine graph in the sense of \cite{bowditch2012relatively}). It is easy to check that the ends of the tree are conical points in the above sense.
%(Equivalently, $M$ is the horofunction compactification of $T$ in the pointwise convergence topology.)      
\end{enumerate}
    
\end{remark}
\subsection*{Structure of the paper}
Section 2 reviews the necessary preliminaries on convergence group actions and $\delta$-hyperbolic spaces. A key technical lemma (Lemma~\ref{producingHyperbolicElements}) for producing hyperbolic elements under a dynamical condition is presented.
In Section 3, assuming the condition in Theorem~\ref{geometricallyFiniteNearOrbit}, we construct geometric objects analogous to those in a cusp-uniform action. We prove that every point in the limit set $\Lambda G$ is either conical or bounded parabolic, thereby establishing the theorem.
In Section 4, we largely follow the strategy of \cite{Bishop96} to construct uncountably many non-conical limit points under the hypothesis of geometric infiniteness. The proof uses the tool of $L$-local quasi-geodesics to build the required paths, thus proving Theorem~\ref{geometricallyFiniteNonconicalCountable}.

\subsection*{Acknowledgments}
The second-named author is grateful to Mahan MJ  for insightful discussions about  Theorem \ref{geometricallyFiniteNonconicalCountable}.

\section{Preliminaries}\label{preliminary}
In this section we introduce the preliminaries we will use on convergence group actions (cf. \cite{tukia1994convergence, bowditch1999convergence}) and hyperbolic spaces (cf. \cite{bridson2013metric}). In particular, we follow the account of  \cite{bowditch2012relatively} on geometrically finite actions. 

\subsection{Convergence actions}
\begin{definition}
    Let $M$ be a compact metrizable space and $G$ a discrete group acting on $M$ by homeomorphisms. The action is called a \textit{convergence group action} if, for every sequence $\{g_n\}$ of distinct elements in $G$, there exist a subsequence $\{g_{n_i}\}$ and points $a, b \in M$ such that $g_{n_i} x \to b$ locally uniformly for all $x \in M \setminus \{a\}$.
\end{definition}

\begin{definition}
    Given such an action, the \textit{limit set} $\Lambda H$ of a subgroup $H \subset G$ is defined as the set of all accumulation points of the $H$-orbits in $M$. 
    
    The action is said to be \textit{non-elementary} if $\#\Lambda G > 2$.
\end{definition}

If the action is non-elementary, the limit set $\Lambda G$ is the unique minimal $G$-invariant closed subset; moreover, $\Lambda G$ is a perfect set (i.e., it has no isolated points) and therefore contains uncountably many points.  

%\begin{definition}
    An element of $G$ is called \textit{elliptic} if it has finite order. By the very definition of convergence group action, every element of infinite order has at least one but at most two fixed points in $M$. Among these, an element is termed \textit{hyperbolic} if it has exactly two fixed points, and \textit{parabolic} if it has exactly one.
%\end{definition}

The following lemma from \cite{tukia1994convergence} is used to produce hyperbolic elements in convergence group actions. We modify some conditions and provide a proof here.
\begin{lemma}\label{producingHyperbolicElements}
    Let the action of $G$ on $M$ be a non-elementary convergence group action.
    If there exist an element $f \in G$ and an open set $U \subset M$ such that $f\overline{U} \subsetneqq U$, then $f$ is hyperbolic.
\end{lemma}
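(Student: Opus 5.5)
The plan is to rule out the two alternatives to being hyperbolic: $f$ elliptic and $f$ parabolic. Throughout, write $V=M\setminus U$. The hypothesis $f\overline U\subsetneqq U$ has two immediate consequences. First, $U\neq M$: otherwise $\overline U=M$ and $fM=M$ could not be a proper subset of $U=M$. Hence $V$ is a nonempty closed set. Second, $U\neq\emptyset$. Moreover $f\overline U\subset U$ is equivalent to $f^{-1}V\subset M\setminus\overline U\subset V$. So $f$ maps $\overline U$ into itself and $f^{-1}$ maps $V$ into itself. By induction, $f^n\overline U\subset\overline U$ and $f^{-n}V\subset V$ for all $n\ge 0$.

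\emph{Step 1: $f$ has infinite order.} Suppose $f^k=1$ for some $k\ge1$. Since $f\overline U\subset U\subset\overline U$, the sets $f^n\overline U$ are nested decreasing. This gives
\[
\overline U=f^k\overline U\subset f\overline U\subsetneqq U\subset\overline U,
\]
which is a contradiction. So $f$ is not elliptic, and by the discussion preceding the lemma it has one or two fixed points in $M$.

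\emph{Step 2: $f$ is not parabolic.} I will use the standard dynamics of a parabolic element in a convergence group (Tukia, Bowditch). If $f$ is parabolic with unique fixed point $p$, then $f^{n}x\to p$ and $f^{-n}x\to p$ as $n\to\infty$, for every $x\in M$. If I want to avoid quoting this, I would derive it from the convergence property directly. Apply the definition to $\{f^n\}$ and to $\{f^{-n}\}$ to get attracting/repelling pairs along subsequences. Then use that any attracting or repelling point of $\{f^{n_i}\}$ is fixed by $f$, since $f$ commutes with $f^{n_i}$. With only one fixed point, all these points equal $p$, and every subsequence has a further subsequence converging to $p$ off $p$. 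This derivation is the only genuinely technical step, and I expect it to be the main obstacle if done from scratch; I would simply cite it.

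Granting this dynamics, pick $x\in\overline U$ (possible since $U\neq\emptyset$). The points $f^nx$ lie in the nested compact sets $f^n\overline U$, so the limit $p$ lies in $\bigcap_n f^n\overline U\subset f\overline U\subset U$. Next pick $y\in V$ (possible since $V\neq\emptyset$). The points $f^{-n}y$ lie in the closed set $V$, so $p\in V=M\setminus U$. These two conclusions contradict each other, so $f$ is not parabolic.

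Combining Steps 1 and 2, $f$ has infinite order and exactly two fixed points, i.e.\ $f$ is hyperbolic. Non-elementarity is not really needed beyond guaranteeing that we are in the convergence-group framework with the elliptic/parabolic/hyperbolic trichotomy.
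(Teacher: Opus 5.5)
Your proof is correct and follows essentially the same route as the paper. The infinite-order step is identical, and your forward and backward limits of $f^{n}x$ and $f^{-n}y$ land in exactly the paper's disjoint nonempty closed $f$-invariant sets $K=\bigcap_{n\ge1} f^n\overline U$ and $L=\bigcap_{n\ge1} f^{-n}(M\setminus U)$. Your explicit appeal to parabolic dynamics ($f^{\pm n}x\to p$ for every $x$) spells out the step that the paper compresses into ``$K$ and $L$ must be distinct fixed points of $f$.''
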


\begin{proof}
    We first show that $f$ has infinite order.
    Assume to the contrary that $f^n = \operatorname{id}$ for some $n \in \mathbb{N}$.
    Then $U \subseteq \overline{U} = f^n\overline{U} \subsetneqq U$, a contradiction.

    Define $K = \bigcap\limits_{n=1}^{\infty} f^{n}\overline{U}$.
    Since $M$ is compact, $K$ is nonempty.
    From $f\overline{U} \subset U \subset \overline{U}$, we obtain $fK=K$.

    Next, define $L = \bigcap\limits_{n=1}^{\infty} f^{-n}(M \setminus U)$.
    Observe that $f^{-1}(M \setminus U) \subset M \setminus \overline{U} \subset M \setminus U$.
    Similarly, we have $L\ne \varnothing$ and $f^{-1}L=L$, hence $fL=L$.

    Since $K \cap L \subset f\overline{U} \cap (M \setminus U) = \varnothing$ and $f$ has infinite order, the sets $K$ and $L$ must be distinct fixed points of $f$.
    Therefore, $f$ is hyperbolic.
\end{proof}

\begin{definition}\label{defn:limitpoints}
    A point $\xi \in M$ is called \textit{conical} if there exist a sequence $\{g_n\}$ in $G$ and distinct points $a, b \in M$ such that $g_n x \to b$ locally uniformly for all $x \in M \setminus \{\xi\}$, while $g_n \xi \to a$.
    
    A point $\xi \in M$ is \textit{parabolic} if its stabilizer $G_\xi$ is infinite and its limit set satisfies $\Lambda G_\xi = \{\xi\}$.
    
    A parabolic point $\xi$ is \textit{bounded parabolic} if the action of $G_\xi$ on $M \setminus \{\xi\}$ is cocompact.

\end{definition}
    
    Note that both conical and parabolic points lie in the limit set $\Lambda G$.
    We denote by $\Lambda^{c} G$ and $\Lambda^{nc} G$ the sets of conical and non-conical points in $\Lambda G$, respectively. In particular, the fixed points of any hyperbolic element are conical. The following definition has its origin in the work of Beardon-Maskit \cite{BeaMas} for Kleinian groups.

\begin{definition}\label{defn:geomfiniteconvergence}
    A convergence group action of $G$ on $M$ is \textit{geometrically finite} if every point of $\Lambda G$ is either conical or bounded parabolic.
\end{definition}

Note that the induced action of $G$ on $\Lambda G$ is a convergence group action. In the literature, geometrically finite actions are usually required to be minimal, i.e., $\Lambda G = M$. We deliberately adopt this definition to remain consistent with the notion of a geometrically finite action on a hyperbolic space; see Definition \ref{defn:geomfiniteonhypspace} and Lemma~\ref{geomFinite}.

\subsection{Gromov hyperbolic geometry}
\begin{definition}
    Let $\delta \ge 0$. A geodesic space $X$ is called \textit{$\delta$-hyperbolic} if every geodesic triangle in $X$ is \textit{$\delta$-thin}; that is, each side is contained in the $\delta$-neighborhood of the union of the other two sides.
\end{definition}
In the sequel, we always assume that $X$ is a proper geodesic $\delta$-hyperbolic space.

\begin{definition}
    The set of asymptotic classes of geodesic rays in $X$ is called the \textit{Gromov boundary}, denoted by $\partial X$. We set $\overline{X} := X \cup \partial X$, which forms a compactification of $X$ under the cone topology.
\end{definition}

Note that any two points $x, y \in \overline{X}$ can be joined by a geodesic. We denote by $[x, y]$ a (often arbitrary) choice of geodesic between $x$ and $y$. Given a geodesic $\gamma=[x, y]$ and $x^\prime, y^\prime\in \gamma$, we denote the subpath of $\gamma$ between $x^\prime, y^\prime$ by $[x^\prime, y^\prime]_{\gamma}$. 

For three distinct points $x, y, z \in \overline{X}$, an \textit{ideal triangle} $\triangle(xyz)$ means the union $[x, y] \cup [y, z] \cup [z, x]$ (with endpoints possibly in boundary). In a $\delta$-hyperbolic space, every ideal triangle is $2\delta$-thin. Furthermore, in a $\delta$-hyperbolic space $X$, any quadrilateral is $2\delta$-thin, and any ideal quadrilateral is $4\delta$-thin. These facts are left as exercises for the interested reader.

\begin{definition}
    For three points $x, y, z\in X$, the \textit{Gromov product} $\langle x, y\rangle_z$ is defined as
$$
\langle x, y\rangle_z=\frac{1}{2}\left(d(z, x)+d(z, y)-d(x, y)\right).
$$
\end{definition}
A basic estimate is that for any $x, y, z\in X$,
$$
d([x, y], z)-4\delta \le \langle x, y\rangle_z\le d([x, y], z).
$$

Gromov's four-point condition provides an equivalent definition of $\delta$-hyperbolicity: for any $x, y, z, w\in X$,
$$
\langle x, y\rangle_w\ge \min\left\{ \langle x, z\rangle_w, \langle y, z\rangle_w \right\}-\delta.
$$
From this, one deduces the following useful lemma.
\begin{lemma}\label{fourPoints}
    Suppose $X$ is a $\delta$-hyperbolic space for some $\delta\ge 0$. For any four points $x, y, z, w\in X$,
    $$
    d(w, [x, y])\ge \min \left\{d(w, [x, z]), d(w, [y, z])\right\}-5\delta.
    $$
\end{lemma}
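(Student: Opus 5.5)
We want to show $d(w,[x,y]) \ge \min\{d(w,[x,z]),d(w,[y,z])\}-5\delta$. The plan is to pass from distances to geodesics to Gromov products, apply the four-point condition once with basepoint $w$, and then pass back, tracking the constants.

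First step: convert each distance into a Gromov product using the basic estimate $d([a,b],w)-4\delta \le \langle a,b\rangle_w \le d([a,b],w)$. On the left-hand side we use the upper bound:
\[
d(w,[x,y]) \ge \langle x,y\rangle_w .
\]

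Second step: apply Gromov's four-point condition with basepoint $w$ and auxiliary point $z$:
\[
\langle x,y\rangle_w \ge \min\{\langle x,z\rangle_w,\langle y,z\rangle_w\}-\delta .
\]

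Third step: convert back using the lower half of the basic estimate:
\[
\langle x,z\rangle_w \ge d(w,[x,z])-4\delta, \qquad \langle y,z\rangle_w \ge d(w,[y,z])-4\delta .
\]
Chaining the three steps gives
\[
d(w,[x,y]) \ge \min\{d(w,[x,z]),d(w,[y,z])\}-4\delta-\delta ,
\]
which is exactly the claimed constant $5\delta$.

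The only point requiring care is the compatibility of constants between the two definitions of hyperbolicity. The four-point condition is stated in the paper with constant $\delta$ for a $\delta$-thin-triangle space; in general the two $\delta$'s differ by a universal factor. I will take the paper's convention as given, as it is asserted just before the lemma, and likewise take the basic estimate with the constant $4\delta$ as stated. With these two facts granted, no further obstacle remains, since the geodesics $[x,y]$, $[x,z]$, $[y,z]$ may be arbitrary choices and the basic estimate applies to any choice.
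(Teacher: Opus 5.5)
Your argument is correct and is exactly the derivation the paper has in mind: the paper gives no separate proof and simply deduces the lemma from the four-point condition together with the basic estimate. Chaining the trivial bound $d(w,[x,y])\ge\langle x,y\rangle_w$, the four-point inequality at basepoint $w$, and the lower estimate $\langle a,z\rangle_w\ge d(w,[a,z])-4\delta$ gives precisely the constant $5\delta$; your remark that the paper uses the same $\delta$ for thin triangles and for the four-point condition is fair, and taking those conventions as stated is appropriate here.
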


\begin{definition}
    Fix a point $z\in X$. There exists a constant $K\ge 1$ such that for sufficiently small $\varepsilon>0$, we can equip $\overline{X}$ with a metric $\rho_{z, \varepsilon}$ satisfying
    \begin{align}\label{eqn: visual metric}
    \frac{1}{K}\, \mathrm{e}^{-\varepsilon d(z, [x, y])}\le \rho_{z, \varepsilon}(x, y)\le \mathrm{e}^{-\varepsilon d(z, [x, y])},
    \end{align}
    for all $x\ne y \in \overline{X}$. This family of  metrics  are called \textit{visual metrics} based at $z$ with parameter $\varepsilon$. 
\end{definition}
The topology induced on $\partial X$ by any visual metric coincides with the cone topology (see \cite{bridson2013metric}).

Given a path $p$ in $X$, we denote the endpoints of $p$ by $p_-, p_+$. 
\begin{definition}
    Given $\lambda\ge 1, C\ge 0$, a path $p$ in $X$ is called a $(\lambda, C)$\textit{-quasi-geodesic} if $\operatorname{Len}(q)\le \lambda d(q_-, q_+)+C$ for every connected subpath $q$ of $p$. 
    
    Let $L>0, \lambda\ge 1$ and $C\ge 0$. 
    A path $p$ is called an $L$\textit{-local} $(\lambda, C)$\textit{-quasi-geodesic} if every subpath of $p$ of length $L$ is a $(\lambda, C)$-quasi-geodesic.
\end{definition}

We will use the following Morse Lemma for $\delta$-hyperbolic spaces.
\begin{lemma}\cite[Page 401]{bridson2013metric}\label{MorseLemma}
    Given $\lambda\ge 1$ and $C\ge 0$, there exists a constant $B=B(\lambda, C)$ with the following property.

    If $\gamma$ is a $(\lambda, C)$-quasi-geodesic in $X$, then $d_{\mathrm{Haus}}(\gamma, [\gamma_-, \gamma_+])\le B$.
\end{lemma}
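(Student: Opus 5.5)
The statement is the Morse Lemma, quoted from \cite[Page 401]{bridson2013metric}. We would reproduce the standard argument in two halves. The first half shows that the geodesic $[\gamma_-,\gamma_+]$ lies in a bounded neighborhood of $\gamma$. The second half shows the converse, that $\gamma$ lies in a bounded neighborhood of the geodesic. Throughout we use the paper's convention: every subpath $q$ satisfies $\operatorname{Len}(q)\le\lambda d(q_-,q_+)+C$. In particular $\gamma$ is rectifiable, and after reparametrizing by arc length we may assume it is continuous. If one prefers, we may first replace $\gamma$ by a piecewise geodesic path through points spaced at unit arc length. This changes the Hausdorff distance and the constants only by an amount depending on $\lambda$ and $C$.

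\textbf{Step 1: the geodesic is near $\gamma$.} We use the standard lemma that in a $\delta$-hyperbolic space, if $c$ is a rectifiable path from $x$ to $y$, then every point of $[x,y]$ lies within $\delta\,|\log_2\operatorname{Len}(c)|+1$ of $c$. This is proved by repeatedly bisecting $c$ and applying $\delta$-thinness of triangles.

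Let $D=\sup_{p\in[\gamma_-,\gamma_+]} d(p,\gamma)$, which is attained at some point $x_0$. The ball $B(x_0,D)$ does not meet $\gamma$.

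On the geodesic, pick points $y$ and $z$ on either side of $x_0$ at distance $2D$ from $x_0$, or the endpoints if these are closer. Choose $y',z'\in\gamma$ with $d(y,y')\le D$ and $d(z,z')\le D$.

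Form the path $[y,y']\cdot\gamma|_{[y',z']}\cdot[z',y]$ from $y$ to $z$. This path avoids $B(x_0,D)$ except possibly near its ends, and by the quasi-geodesic inequality its length is at most about $6D+\lambda\cdot 6D+C$.

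Applying the lemma gives $D\le\delta\log_2\bigl((6+6\lambda)D+C\bigr)+1$. A linear quantity is bounded by a logarithmic one, so $D\le D_0(\lambda,C,\delta)$.

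\textbf{Step 2: $\gamma$ is near the geodesic.} This is the expected obstacle, since a quasi-geodesic could a priori make long excursions.

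Consider a maximal subpath $\gamma|_{[a,b]}$ lying outside the $D_0$-neighborhood of $[\gamma_-,\gamma_+]$. By Step 1, every point of the geodesic lies within $D_0$ of $\gamma|_{[0,a]}$ or of $\gamma|_{[b,\ell]}$. Both of these sets are closed, and the geodesic is connected, so some point $w$ on the geodesic lies within $D_0$ of points $\gamma(s)$ and $\gamma(t)$ with $s\le a\le b\le t$.

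It follows that $d(\gamma(s),\gamma(t))\le 2D_0$. The quasi-geodesic inequality then gives $\operatorname{Len}(\gamma|_{[s,t]})\le 2\lambda D_0+C$.

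Every point of the excursion is therefore within $2\lambda D_0+C+D_0$ of $w$, hence within that distance of the geodesic. Taking $B=(2\lambda+1)D_0+C$, together with $D_0$ from Step 1, bounds the Hausdorff distance. Since $X$ is $\delta$-hyperbolic with $\delta$ fixed throughout the paper, $B$ depends only on $\lambda$ and $C$, as the statement requires.
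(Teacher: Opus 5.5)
Your argument is the standard Bridson--Haefliger proof that the paper simply cites: the logarithmic estimate III.H.1.6 followed by Theorem III.H.1.7. You are right that the paper's definition of a quasi-geodesic already gives the length control $\operatorname{Len}(q)\le\lambda d(q_-,q_+)+C$, so no taming step is needed. Only tidy Step 1: when $y=\gamma_-$ take $y'=\gamma_-$ (likewise for $z$), and write the path as $[y,y']\cdot\gamma|_{[y',z']}\cdot[z',z]$ rather than ending in $[z',y]$. Then it misses the open ball $B(x_0,D)$ entirely, not merely ``except near its ends'', which is exactly what the logarithmic estimate requires. Also note that the paper applies the lemma in Section 4 to rays ending in $\partial X$; these need the usual Arzel\`a--Ascoli limit of $[\gamma(0),\gamma(T)]$ as $T\to\infty$, which only enlarges $B$ by a multiple of $\delta$.
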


\begin{lemma}[\cite{bridson2013metric}, Page 405]\label{localQuasiGeodesic}
    For any $\lambda\ge 1, C\ge 0$, there exist $L_0>0$, $\lambda^\prime\ge 1$ and $C^\prime\ge 0$ with the following property.
    Given $L\ge L_0$, let $\gamma$ be an $L$-local $(\lambda, C)$-quasi-geodesic. 
    Then $\gamma$ is a $(\lambda^\prime, C^\prime)$-quasi-geodesic.
\end{lemma}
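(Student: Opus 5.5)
This is the standard local-to-global theorem for quasi-geodesics in hyperbolic spaces, quoted from Bridson--Haefliger. The plan is to first reduce to the case of local geodesics, then use thin triangles to show that a long local geodesic makes linear progress, and finally rescale.

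\emph{Step 1: reduction to local geodesics.} Let $\gamma$ be an $L$-local $(\lambda, C)$-quasi-geodesic.
\begin{itemize}
\item Subdivide $\gamma$ at points $t_0<t_1<\dots<t_n$ spaced about $L/2$ apart in arclength.
\item Replace each short segment by the geodesic $[\gamma(t_i), \gamma(t_{i+1})]$. The result is a piecewise geodesic path $\gamma'$.
\item By Lemma~\ref{MorseLemma}, applied to each subpath of length at most $L$, the paths $\gamma'$ and $\gamma$ are at Hausdorff distance at most $B(\lambda, C)+L/2$ locally.
\item Any two consecutive breakpoints $\gamma(t_{i-1}), \gamma(t_{i+1})$ lie on a subpath of $\gamma$ of length at most $L$, which is $(\lambda, C)$-quasi-geodesic. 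Hence their distance is at least $(L-C)/\lambda$, so they are far apart once $L\ge L_0$ is large.
\item The same estimate shows that the Gromov product at the breakpoint $\gamma(t_i)$, namely $\langle\gamma(t_{i-1}), \gamma(t_{i+1})\rangle_{\gamma(t_i)}$, is bounded by a constant $c=c(\lambda, C, \delta)$. The bound is independent of $L$ because the middle point lies within $B$ of a geodesic between the outer points.
\end{itemize}
The problem is thus reduced to a piecewise geodesic with long segments and uniformly small backtracking at each break.

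\emph{Step 2: linear progress along the chain.} We show by induction on $n$ that
\[
d\bigl(\gamma(t_0),\gamma(t_n)\bigr) \ \ge\ \sum_i d\bigl(\gamma(t_i),\gamma(t_{i+1})\bigr) \;-\; 2n\bigl(c+2\delta\bigr).
\]
The standard argument (cf. the proof in \cite{bridson2013metric}) runs as follows.
\begin{itemize}
\item The inductive claim is that the geodesic $[\gamma(t_0), \gamma(t_k)]$ passes within $c+2\delta$ of the last breakpoint's incoming segment, far from $\gamma(t_k)$.
\item To pass from $k$ to $k+1$, apply $\delta$-thinness to the triangle with vertices $\gamma(t_0), \gamma(t_k), \gamma(t_{k+1})$.
\item Because each segment has length of order $L/\lambda$, much larger than $c+\delta$, the Gromov product $\langle \gamma(t_0), \gamma(t_{k+1})\rangle_{\gamma(t_k)}$ stays bounded by $c+O(\delta)$. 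It does not accumulate, since the previous tripod point sits deep inside the segment $[\gamma(t_{k-1}), \gamma(t_k)]$.
\end{itemize}
This is the main obstacle: we must keep the constant from growing with $k$. We need the segment lengths to exceed a fixed multiple of $c+\delta$, and that is exactly what determines $L_0$.

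\emph{Step 3: conclusion.} Since each segment has length comparable to $L$ and the loss per segment is a fixed constant, $d(\gamma(t_0), \gamma(t_n))$ is at least a fixed proportion of $\operatorname{Len}(\gamma')$. The length of $\gamma'$ is in turn comparable to $\operatorname{Len}(\gamma)$ by the local quasi-geodesic inequality. Applying this to every subpath, with endpoints adjusted by at most $L$ and absorbed into the additive constant, gives a global $(\lambda', C')$-quasi-geodesic inequality. The constant $\lambda'$ depends only on $\lambda, C, \delta$, and $C'$ depends additionally on $L$, which is acceptable since $L$ is fixed.
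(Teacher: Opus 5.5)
Your proof is correct in outline, but it takes a different route from the source the paper relies on. The paper gives no proof of its own; it cites \cite{bridson2013metric}, p.~405. As I recall that reference, the theorem there (III.H.1.13) treats only $k$-local \emph{geodesics} with $k>8\delta$. Its proof is a direct thin-quadrilateral argument showing that the path stays $2\delta$-close to $[c(a),c(b)]$, after which linear progress is read off from projections to that geodesic. This yields explicit constants $C'=2\delta$, $\lambda'=(k+4\delta)/(k-4\delta)$. That argument does not by itself cover $L$-local $(\lambda,C)$-quasi-geodesics, which is what the lemma asserts and what Section~4 uses, with $(3,2D_0)$.

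Your approach is the Coornaert--Delzant--Papadopoulos / Ghys--de la Harpe proof:
\begin{enumerate}
\item discretize the path;
\item bound $\langle x_{i-1},x_{i+1}\rangle_{x_i}\le c:=B(\lambda,C)$ at the breakpoints via the Morse lemma, independently of $L$ as you correctly note;
\item run the chain lemma.
\end{enumerate}
It handles the quasi-geodesic case directly, at the cost of non-explicit constants, so it actually supplies what the citation leaves implicit. The clean form of your Step~2 invariant is $\langle x_0,x_{k+1}\rangle_{x_k}\le c+\delta$. By the four-point condition this follows from $\langle x_0,x_{k-1}\rangle_{x_k}=d(x_{k-1},x_k)-\langle x_0,x_k\rangle_{x_{k-1}}>c+\delta$. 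That inequality holds once $d(x_{k-1},x_k)>2c+2\delta$, and this is the condition that fixes $L_0$.

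Two bookkeeping points need repair. First, in the statement $\lambda'$ and $C'$ are chosen before $L$, so your remark that $C'$ may depend on $L$ does not literally give what is claimed. The fix is easy: an $L$-local $(\lambda,C)$-quasi-geodesic with $L\ge L_0$ is also $L_0$-local, because subpaths of a $(\lambda,C)$-quasi-geodesic inherit the inequality. Running your argument at scale $L_0$ then gives constants depending only on $\lambda$, $C$ and $\delta$. Second, with spacing ``about $L/2$'' the leftover final piece can make two consecutive pieces longer than $L$. Taking pieces of length between $L/4$ and $L/2$ keeps every consecutive pair inside a subpath of length at most $L$, so the Morse-lemma bound at each breakpoint applies.
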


The isometries of a Gromov hyperbolic space $X$ can be classified into three mutually exclusive types: elliptic, parabolic, and hyperbolic elements. The notion of hyperbolic elements plays a crucial role in the sequel.
\begin{definition}
    An isometry $g$ on $X$ is called  \textit{hyperbolic} if  the \textit{translation length} $\tau(g)$ of $g$ defined by 
    $$\tau(g):=\inf_{x\in X} d(x, gx)$$
    is strictly positive. 
\end{definition}
Note that the translation length is invariant under conjugacy. 

\begin{definition}
    A hyperbolic isometry $g$ has exactly two fixed points  on the boundary defined as $g^+:=\lim\limits_{n\to \infty} g^n o\in \partial X$ and $g^-:=\lim\limits_{n\to \infty} g^{-n} o \in \partial X$.
    
    By an \textit{axis} of $g$ we mean a geodesic line $[g^+, g^-]$ between the two fixed points.
\end{definition}

By Lemma~\ref{MorseLemma}, there exists a constant $D_0>0$ depending only on $\delta$ such that the Hausdorff distance between any two axes of $g$ is bounded by $D_0$. The following result is well-known. We include a proof as we are unable to find appropriate reference.
\begin{lemma}\label{translationDistance}
    There exists a constant $D_1>0$ such that 
    $$
    \tau(g)\le d(x, gx)\le \tau(g) + D_1
    $$
    for any hyperbolic element $g\in G$ and any point $x$ on some axis of $g$.
\end{lemma}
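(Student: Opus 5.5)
The lower bound $\tau(g)\le d(x,gx)$ is immediate from the definition of $\tau(g)$ as an infimum, so all the work is in the upper bound. The plan is to compare $x$ on an axis $\ell=[g^-,g^+]$ with an almost-minimizing point $y$, i.e. one with $d(y,gy)\le \tau(g)+\delta$, and show that $d(x,gx)$ exceeds $d(y,gy)$ by at most a constant depending only on $\delta$.

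\emph{Reduction to points near the axis.} The first step is to show that every almost-minimizing point $y$ lies within a uniform distance $E=E(\delta)$ of $\ell$. Concatenate the geodesic segments $[g^ky,g^{k+1}y]$ for $k\in\mathbb Z$ to get a $g$-invariant bi-infinite path $\gamma$ from $g^-$ to $g^+$. Let $p$ be a closest point of $\ell$ to $y$. Consider the triangles with vertices $y$, $gy$ and their projections to $\ell$. Because $g\ell$ is another axis at Hausdorff distance $\le D_0$ from $\ell$, the projection of $gy$ lies within a bounded distance of $gp$. If $d(y,\ell)=r$ is large, the standard projection estimate in a $\delta$-hyperbolic space gives
\[
d(y,gy)\ge 2r+d(p,gp)-C(\delta).
\]
This estimate holds once $d(p,gp)$ exceeds a constant; otherwise one uses $d(y,gy)\ge 2r-C$ directly. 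Combining it with $d(p,gp)\ge\tau(g)$ and $d(y,gy)\le\tau(g)+\delta$ forces $r\le E(\delta)$.

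\emph{Comparison along the axis.} Next fix such a $y$ and its projection $p\in\ell$, so $d(y,p)\le E$. Then $d(p,gp)\le d(y,gy)+2E\le \tau(g)+\delta+2E$. For an arbitrary $x\in\ell$ we need to compare $d(x,gx)$ with $d(p,gp)$. The point $gx$ lies on $g\ell$, within $D_0$ of some point $x'\in\ell$, and $gp$ is within $D_0$ of some $p'\in\ell$. Since $g$ is an isometry, $d(gx,gp)=d(x,p)$. Hence $d(x',p')$ and $d(x,p)$ differ by at most $2D_0$.

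The remaining difficulty is that $g$ might act on $\ell$ with a "flip": the points $x',p'$ could be arranged in the opposite order from $x,p$. This is ruled out because $g$ fixes $g^+$ and $g^-$ individually, so $g$ preserves the orientation of $\ell$ up to bounded error. Concretely, the map $x\mapsto x'$ is a coarse translation of $\ell$ toward $g^+$, with displacement within bounded error of the constant $d(p,p')$. One way to check this uses the $2\delta$-thinness of ideal triangles with vertex $g^+$: the Busemann-type quantity $\beta_{g^+}(x,gx)$ is coarsely independent of $x\in\ell$ because $g$ fixes $g^+$, and on $\ell$ this quantity approximates $d(x,gx)$ up to $O(\delta+D_0)$. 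It follows that
\[
|d(x,gx)-d(p,gp)|\le C'(\delta),
\]
and therefore $d(x,gx)\le\tau(g)+\delta+2E+C'$. We then take $D_1=\delta+2E+C'$.

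The main obstacle is making the coarse-translation / Busemann step precise with constants that depend only on $\delta$ and not on $g$. The first thing to try is the Busemann cocycle based at $g^+$: it is $g$-equivariant up to an additive $O(\delta)$ error, and along a geodesic ray to $g^+$ it coincides with the signed distance up to $O(\delta)$.
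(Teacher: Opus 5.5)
Your Step 1 is false as stated. The inequality that breaks is the fallback ``otherwise one uses $d(y,gy)\ge 2r-C$ directly'': when the projections of $y$ and $gy$ to $\ell$ are close, nothing keeps $y$ and $gy$ apart. In $\mathbb{H}^2$, an isometry of translation length $\tau$ moves a point at distance $r$ from its axis by $d$, where $\sinh(d/2)=\cosh r\,\sinh(\tau/2)$. So $\{y : d(y,gy)\le\tau+\delta\}$ contains points at distance about $\log(\delta/\tau)$ from the axis. Proper non-elementary actions on $\mathbb{H}^2$ can contain hyperbolic elements of arbitrarily small translation length, even within a single infinite-type Fuchsian group, so no uniform $E(\delta)$ exists. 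The repair is cheap. If $d(p,gp)\le K$ (your small branch), then $p\in\ell$ already satisfies $d(p,gp)\le\tau(g)+K$. Otherwise your projection estimate gives $d(p,gp)\le d(y,gy)-2r+C\le\tau(g)+\delta+C$ directly. What you actually obtain is a near-minimizer \emph{on} $\ell$, not a bound on where all near-minimizers lie. This is exactly the paper's case split: when $[y,gy]$ stays $8\delta$ away from the axis, the paper shows $d(z,gz)\le 2D_0+108\delta$ for $z\in\pi_\gamma(y)$, a bound that does not involve $\tau(g)$ at all.

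Your Step 2 is only sketched, but the Busemann idea is sound, and once written out it proves the lemma without any Step 1. Let $r$ be a subray of $\ell$ toward $g^+$ and set $\beta(a,b)=\lim_{t\to\infty}\bigl(d(a,r(t))-d(b,r(t))\bigr)$. The limit exists since $t\mapsto d(a,r(t))-t$ is non-increasing and bounded below. Then:
\begin{itemize}
\item $\beta$ is an exact cocycle with $|\beta(a,b)|\le d(a,b)$;
\item $|\beta(x,x')|=d(x,x')$ for $x,x'\in\ell$;
\item $|\beta(ga,gb)-\beta(a,b)|\le c\delta$ for a universal $c$, because $g^{-1}r$ is another ray to $g^+$.
\end{itemize}
The cocycle identity gives $|\beta(y,gy)-\beta(x,gx)|\le c\delta$ for \emph{all} $x,y\in X$, hence $d(y,gy)\ge|\beta(x,gx)|-c\delta$. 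For $x\in\ell$, choose $x'\in\ell$ with $d(gx,x')\le D_0$; then $d(x,gx)\le d(x,x')+D_0\le|\beta(x,gx)|+2D_0$. Taking the infimum over $y$ yields $d(x,gx)\le\tau(g)+2D_0+c\delta$, and your orientation (``flip'') worry never arises.

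The paper instead stays with thin polygons and projections. Starting from one near-minimal $z\in\gamma$, it covers $\gamma$ by segments $[x_n,x_{n+1}]_\gamma$ with $x_n\in\pi_\gamma(g^nz)$. It then uses $2\delta$-thin quadrilaterals to place each $x\in\gamma$ near some $g^nz$ or near $g^n[z,gz]$, where displacement is near-minimal. Your Busemann route is shorter; the paper's avoids horofunction estimates entirely.
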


\begin{proof}
    Suppose $\gamma$ is an axis of $g$. 
    By definition, we have $d(x, gx)\ge \tau(g)$ for any $x\in \gamma$, and there exists $y\in X$ such that $\tau(g)\le d(y, gy)\le \tau(g)+\delta$.
    Then for any $n\in \mathbb{Z}$ and any $y^\prime\in g^n[y, gy]$, we have $gy^\prime\in g^{n+1}[y, gy]$, and 
    \begin{align}
        \tau(g)
        \le d(y^\prime, gy^\prime)
        \le d(y^\prime, g^{n+1}y)+d(gy^\prime, g^{n+1}y)
        =d(y, gy)
        \le \tau(g)+\delta.\notag
    \end{align}
    
    If $d([y, gy], \gamma)\le 8\delta$, then there exist $z\in \gamma$ and $y^\prime\in [y, gy]$ such that 
    $$d(z, gz)\le d(z, y^\prime)+d(y^\prime, gy^\prime)+d(gy^\prime, gz)\le \tau(g)+17\delta.$$
    
    If $d([y, gy], \gamma)>8\delta$, then $\operatorname{diam}(\pi_{\gamma}(y)\cup \pi_{\gamma}(gy))\le 8\delta$ by Lemma~\ref{geodesicContracting} from below.
    Let $z$ be a point in $\pi_{\gamma}(y)$. 
    Then $gz\in \pi_{g\gamma}(gy)$.
    By Lemma~\ref{finiteHausdorffDistanceProjection} below, we have $d_{\mathrm{Haus}}(\pi_{g\gamma}(gy), \pi_{\gamma}(gy))\le 2D_0+100\delta$.
    Hence $$d(z, gz)\le \operatorname{diam}(\pi_{\gamma}(y)\cup \pi_{\gamma}(gy))+d_{\mathrm{Haus}}(\pi_{g\gamma}(gy), \pi_{\gamma}(gy))\le 2D_0+108\delta.$$

    In summary, there exists a point $z\in \gamma$ such that $d(z, gz)\le \tau(g)+2D_0+200\delta$.

    Analogously, for any $n\in\mathbb{Z}$ and any $z^\prime\in g^n[z, gz]$, we have 
    $$\tau(g)\le d(z^\prime, gz^\prime)\le \tau(g)+2D_0+108\delta.$$

    For $n\in \mathbb{Z}$, choose $x_n\in \pi_{\gamma}(g^nz)$. 
    Then $d(x_n, g^nz)\le D_0$ and hence $\lim_{n\to +\infty} x_n = g^+, \lim_{n\to -\infty} x_n = g^-$.
    This implies that $\gamma=\bigcup_{n\in \mathbb{Z}}[x_n, x_{n+1}]_{\gamma}$.

    Therefore, given $x\in \gamma$, there exists $n\in \mathbb{Z}$ such that $x\in [x_{n}, x_{n+1}]_{\gamma}$.
    Since the quadrilateral $(x_n, g^n z, g^{n+1}z, x_{n+1})$ is $2\delta$-thin, for any $x\in [x_n, x_{n+1}]_{\gamma}$ we have 
    $$d(x, [x_n, g^nz]\cup [g^nz, g^{n+1}z]\cup[g^{n+1}z, x_{n+1}])\le 2\delta.$$

    If $d(x, [x_n, g^nz]\cup [x_{n+1}, g^{n+1}z])\le 2\delta$, then either $d(x, g^nz)\le D_0+2\delta$ or $d(x, g^{n+1}z)\le D_0+2\delta$.
    Without loss of generality, assume $d(x, g^nz)\le D_0+2\delta$.
    Then we have 
    $$d(x, gx)\le d(x, g^nz)+d(g^nz, g^{n+1}z)+d(g^{n+1}z, gx)\le \tau(g)+4D_0+112\delta.$$

    If $d(x, [g^{n}z, g^{n+1}z])\le 2\delta$, then there exists $z^\prime\in [g^{n}z, g^{n+1}z]$ such that $d(x, z^\prime)\le 2\delta$.
    Hence 
    $$d(x, gx)\le d(x, z)+d(z, gz)+d(gz, gx)\le \tau(g)+2D_0+112\delta. $$

    Thus, taking $D_1=4D_0+112\delta$ completes the proof.
\end{proof}

Until the end of this subsection, we discuss the strong contracting property of the shortest projection to quasigeodesics. This is also standard but less written in the literature, so we give some details at the convenience of the reader.
\begin{definition}
    For a closed subset $A \subset X$, the projection $\pi_{A}: X\to \mathcal{P}(A)$ to $A$ is defined as
$$
\pi_{A}(x):=\{y\in A: d(x, y)=d(x, A)\},
$$
where $\mathcal{P}(A)$ denotes the set of all subsets of $A$. 
\end{definition}
    Note that $\pi_{A}(x)$ is non-empty, thanks to the proper assumption on $X$.

\begin{lemma}\label{geodesicContracting}
    Let $\gamma$ be a geodesic line in $X$ and $x, y\in X$ such that $d(x^\prime, y^\prime)> 8\delta$ for some $x^\prime \in \pi_{\gamma}(x)$ and $y^\prime \in \pi_{\gamma}(y)$. 
    Then there exists a subpath $\gamma^\prime$ of $[x, y]$ such that $d_{\mathrm{Haus}}(\gamma^\prime, [x^\prime, y^\prime])\le 8\delta$, where $d_{\mathrm{Haus}}$ denotes the Hausdorff distance.

    Conversely, if $d([x, y], \gamma) > 8\delta$, then $\operatorname{diam}\bigl(\pi_{\gamma}([x, y])\bigr)\le 8\delta.$
\end{lemma}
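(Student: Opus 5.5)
The first assertion is a standard fellow-travelling argument for projections, which I will carry out with thin triangles and quadrilaterals. Parametrize $\gamma$ and, after possibly swapping $x$ and $y$, assume $x'$ precedes $y'$, with $d(x',y')>8\delta$. The key preliminary fact is about a single point. For $x$ with projection $x'$ and any point $z\in\gamma$, the geodesic $[x,z]$ passes within $2\delta$ of $x'$. To see this, consider the triangle $(x,x',z)$, whose side $[x',z]$ lies on $\gamma$. A point $w$ on $[x',z]_\gamma$ at distance about $2\delta$ from $x'$ is $\delta$-close to either $[x,x']$ or $[x,z]$. If it were $\delta$-close to a point $u\in[x,x']$, then $d(x,w)\le d(x,u)+\delta<d(x,x')$ as soon as $d(u,x')>\delta$. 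The triangle inequality $d(u,x')\ge d(w,x')-\delta$ gives exactly this, contradicting the minimality of $x'$. Hence such a $w$ is close to $[x,z]$. A cleaner constant comes from the Gromov product: $\langle x,z\rangle_{x'}\le$ a small multiple of $\delta$, because $d(x,z)\ge d(x,x')+d(x',z)-c\delta$. I expect this to follow from the triangle argument.

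Next I apply this to the quadrilateral $(x,x',y',y)$, which is $2\delta$-thin. I take a point $p\in[x',y']_\gamma$ at distance more than a few $\delta$ from both endpoints. It is $2\delta$-close to one of $[x,x']$, $[y,y']$ or $[x,y]$. It cannot be close to $[x,x']$, since that would produce a point of $\gamma$ strictly closer to $x$ than $x'$, as in the first paragraph. By symmetry it cannot be close to $[y,y']$ either. Therefore the middle part of $[x',y']$ lies in the $2\delta$-neighbourhood of $[x,y]$. Points of $[x',y']$ near its endpoints lie within a few $\delta$ of points $x''$, $y''$ of $[x,y]$ that are close to $x'$ and $y'$. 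I then let $\gamma'=[x'',y'']\subset[x,y]$ and invoke thinness once more, this time for the quadrilateral $(x'',x',y',y'')$, whose two short sides have length $O(\delta)$. This gives the reverse inclusion $\gamma'\subset N_{c\delta}([x',y'])$. The hypothesis $d(x',y')>8\delta$ guarantees that the middle portion is nonempty, so that $x''$ precedes $y''$ in the right order.

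The main obstacle is the bookkeeping of constants needed to land exactly at $8\delta$. I will choose the cut-off distances near the endpoints (about $3\delta$) so that the total stays at most $8\delta$. If that fails, I will adjust to the Gromov product estimate $d([x,y],z)-4\delta\le\langle x,y\rangle_z$ stated earlier.

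The converse is the contrapositive of the first part. Suppose $\operatorname{diam}\pi_\gamma([x,y])>8\delta$. Then there are points $a,b\in[x,y]$ with projections $a',b'$ satisfying $d(a',b')>8\delta$. The first part, applied to $a,b$, gives a subpath of $[a,b]\subset[x,y]$ within Hausdorff distance $8\delta$ of $[a',b']\subset\gamma$. In particular $d([x,y],\gamma)\le 8\delta$, which contradicts the hypothesis.
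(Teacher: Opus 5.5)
Your proposal is correct and follows essentially the same route as the paper. Both arguments use the $2\delta$-thin quadrilateral $(x,x',y',y)$ together with the fact that $x'$ is also a closest point of $\gamma$ to every point of $[x,x']$. This rules out points of $[x',y']$ lying more than $4\delta$ from both endpoints from being $2\delta$-close to the vertical sides. A second thin quadrilateral, whose short sides have length at most $6\delta$, then gives $2\delta+6\delta=8\delta$.

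The one correction is that the cut-off near the endpoints must be $4\delta$, not about $3\delta$; this is exactly why the hypothesis reads $d(x',y')>8\delta$. Your first-paragraph fact about $[x,z]$ is not needed. Your contrapositive argument for the converse, which the paper leaves implicit, is fine.
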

\begin{proof}
    Let $z$ be the midpoint of $[x^\prime, y^\prime]$; thus $d(z, x^\prime)=d(z, y^\prime)> 4\delta$. 
    We claim that $d(z, [x, x^\prime])>2\delta$ and $d(z, [y, y^\prime])>2\delta$.

    Assume to the contrary that there exists $z^\prime\in [x, x^\prime]\cup [y, y^\prime]$ such that $d(z, z^\prime)\le 2\delta$.
    Without loss of generality, assume that $z^\prime\in [x, x^\prime]$.
    Then $d(z, x^\prime)\le d(z, z^\prime) + d(z^\prime, x^\prime)\le 2d(z, z^\prime)\le 4\delta$, a contradiction.
    
    Since the quadrilateral $(x, x^\prime, y^\prime, y)$ is $2\delta$-thin, there exist points in $[x, y]$ whose distance to $[x^\prime, y^\prime]$ is at most $2\delta$. 
    Let $p, q \in [x, y]$ be the points closest to $x$ and $y$, respectively, that lie in the $2\delta$-neighborhood of $[x^\prime, y^\prime]$.
    Then $d(p, [x, x^\prime])=d(q, [y, y^\prime])=2\delta$.

    Choose $p^\prime \in [x, x^\prime]$ and $q^\prime\in [y, y^\prime]$ such that $d(p, p^\prime)=d(q, q^\prime)=2\delta$.
    Then 
    $$d(p, x^\prime)\le d(p, p^\prime)+d(p^\prime, x^\prime)\le 2\delta + d(p^\prime, p)+d(p, [x^\prime, y^\prime])\le 6\delta,$$
    and similarly $d(q, y^\prime)\le 6\delta$.
    Since the (ideal) quadrilateral $(p, x^\prime, y^\prime, q)$ is $2\delta$-thin, we have $d_{\mathrm{Haus}}([p, q], [x^\prime, y^\prime])\le 8\delta$.
\end{proof}

The projection to a geodesic line $\gamma$ can be extended to the boundary as follows: $$\pi_{\gamma}:\overline{X}\setminus\{\gamma_-, \gamma_+\}\to \gamma$$
Indeed, suppose $\{x_n\}$ is a sequence in $X$ which converges to $\xi\in \partial X$ and $y_n\in \pi_{\gamma}(x_n)$.
Then any limit point of $\{y_n\}$ lies in $\pi_{\gamma}(\xi)$.
Note that $d([x_n, x_m], \gamma)>8\delta$ for sufficiently large $m, n>0$.
By Lemma~\ref{geodesicContracting} we obtain that $d(y_n, y_m)\le 8\delta$.
Therefore, the extension map $\pi_{\gamma}: \overline{X}\setminus\{\gamma_-, \gamma_+\}\to \gamma$ is $8\delta$-coarsely continuous.
Hence Lemma~\ref{geodesicContracting} can be generalized as follows.

\begin{corollary}\label{projectionFellowTravel}
    Let $\gamma$ be a geodesic line in $X$ and $x, y\in \overline{X}$ such that $d(x^\prime, y^\prime)> 24\delta$ for some $x^\prime \in \pi_{\gamma}(x)$ and $y^\prime \in \pi_{\gamma}(y)$. 
    Then there exists a subpath $\gamma^\prime$ of $[x, y]$ such that $d_{\mathrm{Haus}}(\gamma^\prime, [x^\prime, y^\prime])\le 24\delta$, where $d_{\mathrm{Haus}}$ denotes the Hausdorff distance.

    Conversely, if $d([x, y], \gamma) > 24\delta$, then $\operatorname{diam}\bigl(\pi_{\gamma}([x, y])\bigr)\le 24\delta.$
\end{corollary}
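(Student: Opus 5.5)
The plan is to reduce Corollary~\ref{projectionFellowTravel} to Lemma~\ref{geodesicContracting}. We approximate the possibly ideal points $x,y\in\overline{X}$ by interior points and use the $8\delta$-coarse continuity of the extended projection established just before the statement. Each approximation step will cost a bounded multiple of $\delta$, and the enlarged constant $24\delta$ absorbs these losses.

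For the first assertion, suppose $x'\in\pi_\gamma(x)$ and $y'\in\pi_\gamma(y)$ with $d(x',y')>24\delta$. If $x\in\partial X$, we pick a sequence $x_n\to x$ in $X$; if $x\in X$ we simply set $x_n=x$. We choose $y_n\to y$ in the same way and take $x_n'\in\pi_\gamma(x_n)$, $y_n'\in\pi_\gamma(y_n)$. By the coarse continuity remark, for $n$ large we have $d(x_n',x')\le 8\delta$ and $d(y_n',y')\le 8\delta$. Hence $d(x_n',y_n')>8\delta$, and Lemma~\ref{geodesicContracting} gives a subpath $\gamma_n'\subset[x_n,y_n]$ with $d_{\mathrm{Haus}}(\gamma_n',[x_n',y_n'])\le 8\delta$.

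We then transfer this to the fixed geodesic $[x,y]$. Since $[x_n,y_n]$ converges to a geodesic between $x$ and $y$ uniformly on compacta, and any two geodesics with the same endpoints are $2\delta$-close by thinness of ideal bigons, the compact segment $\gamma_n'$ lies in a bounded neighborhood of $[x,y]$ for large $n$. Concretely, we use the ideal quadrilateral $(x_n,x,y,y_n)$, which is $4\delta$-thin. Points of $\gamma_n'$ stay at distance roughly $d(x_n',y_n')$ away from the sides $[x_n,x]$ and $[y_n,y]$ once $x_n,y_n$ are far out, so they are $4\delta$-close to $[x,y]$. This yields a subpath $\gamma'\subset[x,y]$ Hausdorff close to $[x_n',y_n']$. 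Finally, $[x_n',y_n']$ and $[x',y']$ are subsegments of $\gamma$ whose endpoints differ by at most $8\delta$, so they are $8\delta$-Hausdorff close. Summing these contributions should give $8\delta+8\delta+4\delta+(\text{slack})\le 24\delta$. Choosing the endpoints of $\gamma'$ as closest points to $x',y'$ on $[x,y]$ is the delicate bookkeeping step.

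The converse follows by contraposition from the first part, applied to points $p,q\in[x,y]$ rather than $x,y$. Suppose $\operatorname{diam}\pi_\gamma([x,y])>24\delta$, so there are $p,q\in[x,y]$ (interior points, after coarse continuity if needed) with projections more than $24\delta$ apart. Then the first part gives a subpath of $[p,q]\subset[x,y]$ that is $24\delta$-close to a nondegenerate segment of $\gamma$; in particular, $d([x,y],\gamma)\le24\delta$, a contradiction.

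The main obstacle is the approximation step when $x$ or $y$ is ideal. There we must check that the fellow-travelling subpath of $[x_n,y_n]$ really lies near the fixed geodesic $[x,y]$, and not near the approximating sides $[x_n,x]$ or $[y_n,y]$. We also need to keep the accumulated constant within $24\delta$.
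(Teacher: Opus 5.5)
Your proposal is correct and takes the same route as the paper: approximate ideal endpoints by interior points, use the $8\delta$-coarse continuity of the extended projection, and apply Lemma~\ref{geodesicContracting}; the converse then follows by applying the first part to subsegments of $[x,y]$. The paper gives no more detail than this, and your deferred bookkeeping does fit the budget: take the endpoints of $\gamma'$ on $[x,y]$ within $4\delta$ of those of $\gamma_n'$, and a $2\delta$-thin quadrilateral gives $d_{\mathrm{Haus}}(\gamma',\gamma_n')\le 6\delta$, for a total of $6\delta+8\delta+8\delta=22\delta\le 24\delta$.
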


\begin{lemma}\label{finiteHausdorffDistanceProjection}
    Let $\gamma_1, \gamma_2$ be two geodesics in a $\delta$-hyperbolic space $X$ such that 
    $$d_{\mathrm{Haus}}(\gamma_1, \gamma_2)\le D$$
    for some $D>0$.
    Then $d_{\mathrm{Haus}}(\pi_{\gamma_1}(x), \pi_{\gamma_2}(x))\le 2D+100\delta$ for any $x\in \overline{X}$.
\end{lemma}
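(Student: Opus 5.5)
Take $x\in\overline{X}$ (first $x\in X$, then pass to the boundary via the coarse continuity of $\pi_\gamma$ noted before Corollary~\ref{projectionFellowTravel}), and pick $p_1\in\pi_{\gamma_1}(x)$, $p_2\in\pi_{\gamma_2}(x)$. It suffices to show $d(p_1,p_2)\le 2D+100\delta$ for every such pair, since this gives the Hausdorff bound at once. The plan is to transfer $p_2$ to $\gamma_1$ and compare it with $p_1$ using the contraction property of Lemma~\ref{geodesicContracting}.

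The first step is to choose $q\in\gamma_1$ with $d(p_2,q)\le D$, using the Hausdorff hypothesis. Then $d(x,q)\le d(x,p_2)+D$. Next pick $q'\in\gamma_2$ with $d(p_1,q')\le D$. Since $p_2$ is a closest point on $\gamma_2$, we get $d(x,p_2)\le d(x,q')\le d(x,p_1)+D$. Combining these, $d(x,q)\le d(x,\gamma_1)+2D$, so $q$ is a $2D$-almost closest point to $x$ on $\gamma_1$.

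The second step is a standard lemma: if $q\in\gamma_1$ satisfies $d(x,q)\le d(x,\gamma_1)+E$, then $d(q,p_1)\le E+C\delta$. I would prove it from the thinness of the triangle $(x,p_1,q)$, with side $[p_1,q]\subset\gamma_1$. Let $m$ be the point of $[p_1,q]$ where the $\delta$-thin triangle splits (the internal point), so that $d(m,[x,p_1])\le \delta$ and $d(m,[x,q])\le\delta$ up to a few $\delta$. Because $p_1$ is a closest point, any $u\in[x,p_1]$ has $p_1$ as a closest point on $\gamma_1$. Hence $d(u,m)\ge d(u,p_1)$, which gives $d(m,p_1)\le 2\delta$ roughly. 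Since $m$ is near $[x,q]$, we have $d(x,q)\approx d(x,m)+d(m,q)\ge d(x,\gamma_1)+d(m,q)-O(\delta)$. This yields $d(m,q)\le E+O(\delta)$, and therefore $d(p_1,q)\le E+O(\delta)$. Alternatively, one could use Gromov products: $\langle x,q\rangle_{p_1}\le O(\delta)$ gives $d(x,q)\ge d(x,p_1)+d(p_1,q)-O(\delta)$ directly.

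With $E=2D$ this gives $d(p_1,p_2)\le d(p_1,q)+d(q,p_2)\le 3D+O(\delta)$, which is off by an extra $D$. The main obstacle is therefore sharpening the constant to $2D$. I would redo the comparison symmetrically. Apply the second-step estimate on $\gamma_2$ to $q'$, using $d(x,q')\le d(x,p_1)+D\le d(x,\gamma_2)+2D$, and sum the inequalities of the form $d(x,q)\ge d(x,p_1)+d(p_1,q)-O(\delta)$ and $d(x,q')\ge d(x,p_2)+d(p_2,q')-O(\delta)$. Adding these and using $d(x,q)\le d(x,p_2)+D$ and $d(x,q')\le d(x,p_1)+D$ gives $d(p_1,q)+d(p_2,q')\le 2D+O(\delta)$. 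Then
$$d(p_1,p_2)\le d(p_1,q)+d(q,p_2)\le 2D+O(\delta)+D.$$
To remove the last $D$, I would instead bound $d(p_1,p_2)\le \min\{d(p_1,q)+D,\ d(p_2,q')+D\}\le D+\tfrac12(2D+O(\delta))$, which is at most $2D+100\delta$ once the $O(\delta)$ terms are tracked (the generous $100\delta$ leaves room). For $x\in\partial X$, apply the result to points $x_n\to x$ and use the $8\delta$-coarse continuity of the extended projections, which absorbs into the $100\delta$ slack after adjusting the constants.
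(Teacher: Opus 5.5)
Your argument is correct, and it takes a different route from the paper. The paper fixes $z\in\pi_{\gamma_2}(p_1)$, so that $d(p_1,z)\le D$, and splits into two cases. If $d(z,p_2)\le D+100\delta$, the triangle inequality finishes. Otherwise, Corollary~\ref{projectionFellowTravel} (the contraction property of $\gamma_2$) gives a subsegment of $[p_1,x]$ of length at least $d(z,p_2)-48\delta$ that $24\delta$-fellow-travels $[z,p_2]_{\gamma_2}$, and this forces $d(x,\gamma_2)<d(x,\gamma_2)-28\delta$.

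You avoid both the contraction lemma and the case split. Your only inputs are:
\begin{itemize}
\item the thin-triangle estimate $d(x,q)\ge d(x,p_1)+d(p_1,q)-4\delta$ for $q\in\gamma_1$;
\item its twin on $\gamma_2$;
\item the bound $d(p_1,p_2)\le D+\min\{d(p_1,q),d(p_2,q')\}\le D+\tfrac12\bigl(d(p_1,q)+d(p_2,q')\bigr)$.
\end{itemize}
To make the $4\delta$ precise, use connectedness to choose $m\in[p_1,q]$ within $\delta$ of both $[x,p_1]$ and $[x,q]$; then $d(m,p_1)\le 2\delta$, so no ``roughly'' is needed. This yields $d(p_1,p_2)\le 2D+4\delta$ for $x\in X$. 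The preliminary $3D+O(\delta)$ attempt can simply be dropped. Your argument also works verbatim for geodesic segments and rays, whereas the paper's contraction corollary is stated only for lines. The paper's version has the advantage of reusing the contraction machinery it needs anyway in Lemmas~\ref{translationLengthLong}, \ref{projectiveNear} and \ref{injective}.

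For $x\in\partial X$, the paper only asserts that it suffices to treat $x\in X$. Your limiting argument is the right way to fill this in, with one caveat: $p_1$ and $p_2$ may arise from different approximating sequences $x_n, x_n'\to x$. This is handled by the converse part of Lemma~\ref{geodesicContracting}, because $[x_n,x_n']$ eventually stays far from $\gamma_2$ when $x$ is not an endpoint of $\gamma_2$.
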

\begin{proof}
    It suffices to prove that $d(y_1, y_2)\le 2D+100\delta$ for $x\in X$, $y_1\in \pi_{\gamma_1}(x)$ and $y_2\in \pi_{\gamma_2}(x)$.
    By the definition of the Hausdorff distance, we have $d(x, y_1)\le d(x, \gamma_2)+D$.
    Let $z\in \pi_{\gamma_2}(y_1)$.
    If $d(z, y_2)\le D+100\delta$, then $d(y_1, y_2)\le 2D+100\delta$.
    Now assume that $d(z, y_2)>D+100\delta$.
    
    By Corollary~\ref{projectionFellowTravel}, there exists a subpath $[y^\prime, x^\prime]$ of $[y_1, x]$ such that 
    $$d_{\mathrm{Haus}}([y^\prime, x^\prime], [z, y_2]_{\gamma_2})\le 24\delta.$$
    Therefore,
    \begin{align}
    &d(x, \gamma_2)\notag\\
    \le &d(x, x^\prime)+24\delta\notag\\
    \le &d(x, y_1)-d(x^\prime, y^\prime)+24\delta \notag\\
    \le &d(x, \gamma_2) + D - (d(z, y_2)-24\delta - 24\delta) + 24\delta\notag\\
    < &d(x, \gamma_2) - 28\delta,\notag
    \end{align}
    which is a contradiction. 
\end{proof}

\subsection{Geometrically finite actions}
For the remainder of this section, we assume that a group $G$ acts properly by isometries on a $\delta$-hyperbolic space $X$, where $\delta\ge 0$, and fix a basepoint $o \in X$.

The action of $G$ on $X$ extends naturally to an action on the Gromov boundary $\partial X$. The induced boundary action is always a convergence group action. Furthermore, an element $g \in G$ is elliptic, parabolic, or hyperbolic with respect to the isometry classification on $X$ if and only if it is, respectively, elliptic, parabolic, or hyperbolic with respect to the induced convergence action on $\partial X$.

The geometry of $X$ provides a convenient characterization of conical limit points. A point $\xi \in \partial X$ is conical if and only if, for some (and hence any) geodesic ray $\gamma$ with $\gamma(\infty) = \xi$, there exists a sequence $t_n \to \infty$ and a constant $R > 0$ such that $d(\gamma(t_n), Go) \le R$.

\begin{definition}
    The \textit{weak-convex hull} $\mathrm{CH}(\Lambda G)$ of $\Lambda G$ is the union of all bi-infinite geodesics with two endpoints in $\Lambda G$. 
\end{definition}

By the $\delta$-thin triangle property, one sees that $\mathrm{CH}(\Lambda G)$ is a $G$-invariant \textit{uniformly quasi-convex} subset of $X$. Namely, there exists a constant $D$ (depending only on $\delta$) such that any geodesic with endpoints in $\mathrm{CH}(\Lambda G)$ is contained in the $D$-neighborhood of $\mathrm{CH}(\Lambda G)$. Further, the topological boundary of $\mathrm{CH}(\Lambda G)$ in $\partial X$ coincides with the limit set $\Lambda G$. 

Following \cite{bowditch2012relatively}, we say that a family of quasi-convex subsets in $X$ is \textit{sufficiently separated} if it is $r$-separated for a sufficiently large constant $r$ depending on $\delta$.  The following notion  was based on the work of Marden \cite{Marden} for Kleinian groups, and further developed by Bowditch in various contexts \cite{Bow3,Bow4,bowditch2012relatively}.
\begin{definition}\label{defn:geomfiniteonhypspace}
The action of $G$ on $X$ is called \textit{geometrically finite} if there exists a sufficiently separated  $G$-invariant family $\{B_p : p \in \Pi\}$ of horoballs in $X$, where each horoball $B_p$ is centered at a point $p$ in a $G$-invariant set $\Pi \subset \partial X$, and the action of $G$ on the complement $\mathrm{CH}(\Lambda G) \setminus \bigcup_{p \in \Pi} B_p$ is cocompact.    
\end{definition}

It is a fundamental fact that for proper actions on hyperbolic spaces, the geometric definition of geometric finiteness  is equivalent to the dynamical definition in terms of the boundary action in Definition \ref{defn:geomfiniteconvergence}. We record this equivalence in the following lemma.

\begin{lemma}[\cite{bowditch2012relatively}, Proposition 6.13]\label{geomFinite}
    The proper action of $G$ on $X$ is geometrically finite if and only if the action of $G$ on $\partial X$ is geometrically finite.
\end{lemma}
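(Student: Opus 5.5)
This lemma is cited from Bowditch as Proposition 6.13, so the plan is to reconstruct Bowditch's argument one direction at a time, using the horoball structure as the dictionary between the geometric and dynamical pictures.

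\emph{Geometric $\Rightarrow$ dynamical.} Assume $\{B_p:p\in\Pi\}$ is a sufficiently separated $G$-invariant family of horoballs and $K\subset X$ is compact with $GK\supset \mathrm{CH}(\Lambda G)\setminus\bigcup_p B_p$.
\begin{enumerate}
\item Take $\xi\in\Lambda G$ and a geodesic ray $\gamma\subset \mathrm{CH}(\Lambda G)$ ending at $\xi$; such a ray lies within bounded distance of the hull by quasi-convexity.
\item If $\gamma$ leaves every horoball it enters, then it returns to the thick part $\mathrm{CH}(\Lambda G)\setminus\bigcup_p B_p$ at unboundedly late times. There it lies within $\operatorname{diam}K$ of $Go$, so $\xi$ is conical by the geometric criterion for conical points.
\item Otherwise $\gamma$ eventually stays in a single $B_p$, and hence $\xi=p$.
\item To see that $p$ is bounded parabolic, note that the horosphere $\partial B_p\cap \mathrm{CH}(\Lambda G)$ is covered by translates of $K$. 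By separation, only elements of $G_p$ can carry $K$ near this horosphere, and proper discontinuity leaves only finitely many $G_p$-orbits of such translates. Hence $G_p$ acts cocompactly on the horosphere piece.
\item Projecting radially from $p$ then shows that $G_p$ acts cocompactly on $\Lambda G\setminus\{p\}$. This is the version needed, since $\partial X\setminus\Lambda G$ is irrelevant to the restricted action (cf.\ the remark after Definition~\ref{defn:geomfiniteconvergence}). Finally, $p$ cannot be fixed by a hyperbolic element, because its axis would penetrate arbitrarily deep into $B_p$ while remaining near $Go$.
\end{enumerate}

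\emph{Dynamical $\Rightarrow$ geometric.} This direction is the harder one.
\begin{enumerate}
\item Let $\Pi$ be the set of bounded parabolic points. I will first show there are finitely many $G$-orbits in $\Pi$. Second countability gives countability of $\Pi$, but finiteness of orbits is a genuine step. I would get it from a compactness argument once the horoballs are built: a sequence of horoballs in distinct orbits would produce a non-conical, non-parabolic limit.
\item For each $p\in\Pi$, choose a compact $F_p\subset\Lambda G\setminus\{p\}$ with $G_pF_p=\Lambda G\setminus\{p\}$. Using the geodesics from $p$ to $F_p$, define a horoball $B_p$ deep enough, equivariantly on orbits, so that the family is $r$-separated. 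This uses properness of $G$ on $X$ and the fact that distinct parabolic points have stabilizers whose intersection is finite.
\item The main obstacle is cocompactness of $G$ on $\mathrm{CH}(\Lambda G)\setminus\bigcup_p B_p$. I would argue by contradiction: if it fails, there are points $x_n$ in this complement with $d(x_n,Go)\to\infty$. Translate them into a fixed compact region in which they stay near $o$ (possible modulo the cocompactness question itself), or more directly pick $x_n$ on geodesics of the hull. Pass to a limit to obtain a geodesic ray in the hull whose distance to $Go$ tends to infinity, giving a non-conical endpoint $\xi$.
\item By hypothesis $\xi$ is bounded parabolic, so the ray eventually enters $B_\xi$. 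Cocompactness of $G_\xi$ on $\Lambda G\setminus\{\xi\}$ then forces the hull near $\xi$, outside $B_\xi$, to be within bounded distance of $G_\xi o$. This contradicts the choice of the $x_n$.
\end{enumerate}

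The delicate point is making this limiting argument uniform, meaning that the escaping points $x_n$ genuinely produce a single non-conical limit point. I would handle it with a diagonal argument on nested geodesics emanating from $o$, using the thin triangle estimates recorded in the preliminaries.
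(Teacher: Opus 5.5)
The paper does not prove this lemma; it only cites Bowditch, so your sketch has to stand on its own.

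Your direction from horoballs to boundary dynamics is essentially fine. Step~5 is the same radial-projection argument as the Claim at the end of Section~3. One correction in Step~4: it is not true that only elements of $G_p$ carry $K$ to $\partial B_p$. Rather, $K$ meets only finitely many of the $r$-separated horoballs, so the relevant $g$ form finitely many cosets $G_ph_1,\dots,G_ph_k$. That is what gives cocompactness of $G_p$ on $\partial B_p\cap\mathrm{CH}(\Lambda G)$.

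The genuine gap is in Steps~3--4 of the converse, which carry all the content of that direction.

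\emph{Step~3.} Translating the $x_n$ into a compact region near $o$ is impossible, since $d(gx_n,o)\ge d(x_n,Go)\to\infty$. An unspecified diagonal argument does not by itself yield a ray escaping from $Go$: limits of arbitrary segments $[o,x_n]$ can keep returning near the orbit.

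\emph{Step~4.} The claim is false as stated. The part of the hull near $\xi$ and outside $B_\xi$ contains points deep inside horoballs $B_q$, with $q\in G\xi$ close to $\xi$, and these are far from $Go$. Moreover, nothing in your sketch ties the $x_n$ to $B_\xi$, so no contradiction actually arises.

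\emph{The missing idea} is a nearest-point normalization. Translate so that $d(x_n,o)=d(x_n,Go)$, with $o\in\mathrm{CH}(\Lambda G)$. Then for all $y\in[o,x_n]$ and $g\in G$,
\[
d(y,go)\ \ge\ d(x_n,go)-d(x_n,y)\ \ge\ d(x_n,o)-d(x_n,y)\ =\ d(y,o),
\]
so $d(y,Go)=d(y,o)$ along the entire segment. The argument then runs as follows.
\begin{enumerate}
\item A limit ray $\rho$ of these segments satisfies $d(\rho(t),Go)=t$ and ends at some $\xi\in\Lambda G$. So $\xi$ is non-conical, hence bounded parabolic.
\item What bounded parabolicity really provides is that the horosphere piece $\partial B_\xi\cap N_D(\mathrm{CH}(\Lambda G))$ lies within some $c_\xi$ of $G_\xi o$. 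To see this, use $G_\xi$ to move the other endpoint of a geodesic $[\eta,\xi]$ into the compact set $F_\xi$.
\item For large $n$, the segment $[o,x_n]$ follows $\rho$ to depth $T$ inside $B_\xi$. Because $x_n\notin B_\xi$, it must leave through a point $z_n\in\partial B_\xi$ with $d(o,z_n)\ge T-O(\delta)$.
\item But $d(z_n,o)=d(z_n,Go)\le c_\xi$, which is a contradiction once $T$ is large.
\end{enumerate}

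With this argument, your Step~1 is not a prerequisite. Horoballs can be chosen orbit by orbit, each deep relative to its own constant $C_p$, where $d([p,q],Go)\le C_p$ for all $q\in\Lambda G\setminus\{p\}$ comes from $F_p$. Since $d([p,q],Go)\le\min\{C_p,C_q\}$, any two horoballs are then separated, regardless of the number of orbits. Finiteness of orbits follows afterwards from cocompactness.

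You should also explain how $B_p$ is made exactly $G_p$-invariant in a merely $\delta$-hyperbolic space, for example via boundedness of the Busemann quasi-character on $G_p$. The paper's orbit-distance sets $B_{\xi,R}$ from Section~3 are automatically $G_\xi$-invariant, though they are not literal horoballs.
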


\section{Escaping Closed Geodesics}
In this section, we assume that a group $G$ acts properly by isometries on a proper $\delta$-hyperbolic space $X$ for some $\delta\ge 0$. Moreover, we assume that the action is non-elementary. We fix a basepoint $o\in X$. Recall that $d_{\mathrm{Haus}}(\gamma_1, \gamma_2)\le D_0$ for any hyperbolic element $g\in G$ and any axes $\gamma_1, \gamma_2$ of $g$.

The goal of this section is to establish Theorem~\ref{geometricallyFiniteNearOrbit}.

The nontrivial direction is to show that, if there is no escaping sequence of hyperbolic elements, then the action is geometrically finite. Accordingly, we impose the following assumption up to Subsection~\ref{subsec:proofTheorem1}, where this implication, together with the converse direction in Theorem~\ref{geometricallyFiniteNearOrbit}, will be proved using the preparatory results established below.

\medskip
\noindent{\textbf{Non-Escaping Hyperbolic Elements (NEHE) Assumption}} 
\\
There exists $R_0>0$ such that for every hyperbolic element $g \in G$, some axis $\gamma$ of $g$ satisfies $d(\gamma, G o) \le R_0$.

\medskip
Since the action is non-elementary, there are infinitely many conjugacy classes of  hyperbolic elements. An immediate consequence of the \textbf{NEHE Assumption} is the following finiteness statement about conjugacy classes.
\begin{lemma}\label{stableTranslationLength}
    For any $r > 0$, there are only finitely many conjugacy classes of hyperbolic elements in $G$ with translation length less than $r$.
\end{lemma}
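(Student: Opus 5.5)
Under the \textbf{NEHE Assumption}, every hyperbolic element can be conjugated so that one of its axes passes close to $o$; properness of the action then leaves only finitely many candidates.

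Let $g\in G$ be hyperbolic with $\tau(g)<r$. By the \textbf{NEHE Assumption}, some axis $\gamma$ of $g$ contains a point $x$ with $d(x,ho)\le R_0$ for some $h\in G$. Put $g'=h^{-1}gh$. Then $g'$ is conjugate to $g$, so $\tau(g')=\tau(g)<r$. Moreover $h^{-1}\gamma$ is an axis of $g'$: indeed $h^{-1}$ sends the fixed points $g^\pm$ to the fixed points $g'^\pm$ and maps geodesic lines to geodesic lines. This axis contains the point $x'=h^{-1}x$, which satisfies $d(x',o)\le R_0$.

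By Lemma~\ref{translationDistance}, applied to the point $x'$ on an axis of $g'$, we get $d(x',g'x')\le \tau(g')+D_1< r+D_1$. The triangle inequality then gives
$$d(o,g'o)\le d(o,x')+d(x',g'x')+d(g'x',g'o)< 2R_0+r+D_1.$$
Since $X$ is proper and $G$ acts properly, only finitely many elements $f\in G$ satisfy $d(o,fo)\le 2R_0+r+D_1$. Every conjugacy class of a hyperbolic element with translation length less than $r$ contains such an $f$, so there are only finitely many such classes.

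No real obstacle arises. The only points needing care are that conjugation carries axes of $g$ to axes of $h^{-1}gh$, and that Lemma~\ref{translationDistance} holds for any axis with the uniform constant $D_1$. Both are immediate from the definitions.
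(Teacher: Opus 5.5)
Your proof is correct and follows essentially the same route as the paper: NEHE gives an orbit point within $R_0$ of an axis, Lemma~\ref{translationDistance} bounds the displacement there, and properness of the action finishes the argument. The only cosmetic difference is that you conjugate first and work with $h^{-1}gh$ and its axis $h^{-1}\gamma$, whereas the paper bounds $d(ho, gho)$ directly on $\gamma$ and then translates by $h^{-1}$.
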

\begin{proof}
    Let $r > 0$, and let $g \in G$ be a hyperbolic element with $\tau(g) < r$. By our assumption, there exists $h \in G$ and an axis $\gamma$ of $g$ such that $d(h o, \gamma) \le R_0$. Thus, we can choose a point $x \in \gamma$ satisfying $d(h o, x) \le R_0$. Applying the triangle inequality and using Lemma~\ref{translationDistance}, we estimate:
    $$
    d(h o, g h o) \le d(h o, x) + d(x, g x) + d(g x, g h o) \le R_0 + \tau(g) + D_1 + R_0 < 2R_0 + D_1 + r.
    $$
    This yields $d(o, h^{-1} g h \, o) = d(h o, g h o) < 2R_0 + D_1 + r$.
    Since the action of $G$ on $X$ is proper, the set $\{ k \in G : d(o, k o) < 2R_0 + D_1 + r \}$ is finite. Therefore, there is an element $h^{-1}gh$ which is conjugate to $g$ and belongs to this finite set. Consequently, there are only finitely many conjugacy classes of such elements.
\end{proof}

\subsection{The main proposition}
Assume that the non-conical limit set $\Lambda^{nc}G$ is non-empty; otherwise there is nothing to prove.  
Maintaining \textbf{NEHE Assumption}, our next goal is to prove the following key estimate, which describes the geometry of lines connecting a non-conical limit point to its translates.

\begin{proposition}\label{lineNearOrbit}
    For any $\xi\in \Lambda^{nc} G$, there exists a constant $R_{\xi}$ depending on $\xi$ such that $d([\xi, g\xi], G o)\le R_{\xi}$ for every $g\in G$ with $g\xi\ne \xi$.
\end{proposition}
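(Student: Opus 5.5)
We argue by contradiction. Suppose that for some $\xi\in\Lambda^{nc}G$ there is a sequence $g_n\in G$ with $g_n\xi\ne\xi$ and $d([\xi,g_n\xi],Go)\to\infty$. The idea is to build, from two such long lines, an element that acts as a ``ping-pong'' map near $\xi$. Lemma~\ref{producingHyperbolicElements} will show that this element is hyperbolic. We will then check that its axis stays far from $Go$, which contradicts the \textbf{NEHE Assumption}.

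\textbf{Step 1: normalize.} Fix a geodesic ray $\gamma$ from $o$ to $\xi$. Since $\xi$ is non-conical, $d(\gamma(t),Go)\to\infty$ as $t\to\infty$. The line $[\xi,g_n\xi]$ fellow-travels $\gamma$ up to the point where $\gamma$ and $g_n\gamma$ diverge, and this point lies within bounded distance of $Go$ only if it lies at a bounded depth along $\gamma$. So $d([\xi,g_n\xi],Go)\to\infty$ forces the following. Let $[o,g_no]$ pass close to a point $x_n$ in the orbit. Then $g_n\xi$ and $\xi$ are both seen from $o$ within an angle that shrinks to zero in the visual metric $\rho_{o,\varepsilon}$. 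Put differently, $g_n\xi\to\xi$, and the Gromov product $\langle \xi, g_n\xi\rangle_o$ (defined via the boundary extension, or computed with points far along the rays) tends to infinity at a rate governed by the escape of $[\xi,g_n\xi]$ from $Go$.

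\textbf{Step 2: produce the element.} Take a small visual ball $U$ around $\xi$. We want an element $f$ with $f\overline U\subsetneqq U$, or a word of length two in suitable $g_n^{\pm1}$ and translates. The natural candidate is $f=g_n h$ or $f = g_m^{-1}g_n$ for well-chosen $m,n$. Here is where the absence of conicality is used. Since $\xi$ is not conical, $g_n^{-1}$ cannot contract a neighborhood of $\xi$ onto a point away from $\xi$ in a uniform way. By the convergence property, after passing to a subsequence, $g_n\to b$ locally uniformly off a repeller $a$. We must identify $a$ and $b$. Because the lines $[\xi,g_n\xi]$ go deep into the region near $\xi$, which escapes $Go$, we expect $b=\xi$. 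Otherwise $g_n$ would push most of $X$, including $o$, to a point $b\neq \xi$, and then $[\xi,g_n\xi]$ would pass near $g_no$ or near $o$, giving a bounded distance to $Go$. We also expect $a=\xi$, by the symmetric argument applied to $g_n^{-1}[\xi,g_n\xi]=[g_n^{-1}\xi,\xi]$. The case $a=b=\xi$ is parabolic-like. In that case, composing two such elements $g_m^{-1}g_n$ with suitable indices gives a map sending the complement of a small neighborhood $V$ of $\xi$ into a smaller set. Applied on an appropriate open set $U$ (a complement-type region), this yields $f\overline U\subsetneqq U$. So $f$ is hyperbolic by Lemma~\ref{producingHyperbolicElements}, with fixed points in $K\subset \overline{U}$ and $L\subset M\setminus U$.

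\textbf{Step 3: the axis escapes.} Both fixed points of $f$ will lie very close to $\xi$ in $\rho_{o,\varepsilon}$, and they lie on opposite sides of the ping-pong. So the axis of $f$ stays within bounded distance of the lines $[\xi,g_m\xi]$ and $[\xi,g_n\xi]$ along its whole length. We would prove this with Lemma~\ref{fourPoints} and thin ideal quadrilaterals, together with the deep part of $\gamma$. Hence $d(\text{axis}(f),Go)\ge \min_{k=m,n}d([\xi,g_k\xi],Go)-C$, which can be made larger than $R_0+D_0$. This contradicts the \textbf{NEHE Assumption}.

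\textbf{Main obstacle.} The hardest step is Step 2. We must choose the open set $U$ and the length-two word carefully, so that $f\overline U\subsetneqq U$ holds strictly. We must also make sure that the attracting and repelling sets $K,L$ are localized so that Step 3 applies. The first thing I would try is to take $U$ to be the complement of a small closed neighborhood of the repeller of $g_m^{-1}$ and use uniform convergence on compacta. I would then use the quantitative depth estimates from Step 1 to control where both fixed points land.
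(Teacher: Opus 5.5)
\textbf{Steps 1--2 can be made to work.} Indeed $g_n\xi\to\xi$. The attracting point of $\{g_n\}$ must be $\xi$. The reason is not your heuristic about $[\xi,g_n\xi]$ passing near $o$ or $g_no$. It is that otherwise the repelling point equals $\xi$, and then $\xi$ is conical \emph{by definition}. The length-two word $f=g_ng_N$, with $N$ fixed so that $g_N\xi$ differs from the repelling point $a$, satisfies $f\overline U\subsetneqq U$ for a small neighbourhood $U$ of $\xi$ once $n$ is large. This is exactly the paper's choice.

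\textbf{The genuine gap is Step 3.} The proof of Lemma~\ref{producingHyperbolicElements} puts the repelling fixed point of such an $f$ in $M\setminus U$. For $f=g_ng_N$ that fixed point converges to $g_N^{-1}a\neq\xi$ as $n\to\infty$, so it stays boundedly away from $\xi$. Symmetrically, for $g_m^{-1}g_n$ with $m$ fixed, the attracting fixed point sits near $g_m^{-1}\xi$. If both indices vary, the convergence property gives no control of the composition at all. So the two fixed points are not both close to $\xi$.

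More fundamentally, visual proximity of endpoints only makes the axis fellow-travel a given line inside a large ball around $o$. The axis is bi-infinite, and nothing in your argument stops it from approaching $Go$ outside that ball. The inequality $d(\mathrm{axis}(f),Go)\ge\min_k d([\xi,g_k\xi],Go)-C$ is the whole difficulty, and thin polygons do not give it. When $\tau(f)$ is small, $\xi$ and $f\xi$ can have nearby projections to the axis while $[\xi,f\xi]$ stays far from it. In that case, escape of $[\xi,f\xi]$ from $Go$ tells you nothing about the axis.

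\textbf{What is missing} is the step that turns the \textbf{NEHE Assumption} into control of $[\eta,f\eta]$ for \emph{every} hyperbolic $f$ and every non-conical $\eta$, with a uniform constant $R_2$ (Lemma~\ref{translationNearOrbit}). This splits by translation length.
\begin{enumerate}
\item If $\tau(f)\ge 2D_0+200\delta$, the projections of $\eta$ and $f\eta$ to an axis are roughly $\tau(f)$ apart. So $[\eta,f\eta]$ fellow-travels a fundamental domain of the axis, and that domain contains a point near $Go$ (Lemma~\ref{translationLengthLong}). This fellow-travelling, together with $f$-invariance of $Go$, is what would rescue your Step 3, but only in this regime.
\item If $\tau(f)$ is small, NEHE gives only finitely many conjugacy classes (Lemma~\ref{stableTranslationLength}). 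For each class, a compactness argument on a fundamental domain of $\langle f\rangle$ acting on $\partial X\setminus\{f^\pm\}$ bounds $d([\eta,f\eta],Go)$ uniformly (Lemma~\ref{fixHyperbolicElement}).
\end{enumerate}

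Once $d([\xi,f\xi],Go)\le R_2$ is known, the paper draws the contradiction on the ideal triangle $\triangle(\xi,g_n\xi,g_ng_N\xi)$, not on the axis. Its sides $[\xi,g_n\xi]$ and $g_n[\xi,g_N\xi]$ lie at distance at least $n$ and $N$ from $Go$. By $2\delta$-thinness this forces $d([\xi,g_ng_N\xi],Go)\ge\min(n,N)-2\delta>R_2$. Your proposal never handles the short-translation-length case, and without it the argument does not close.
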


To establish Proposition \ref{lineNearOrbit}, we first prove several auxiliary lemmas. In the following lemmas, since the fixed points of every hyperbolic element $g\in G$ are conical, we have $g\xi\ne \xi$ for any non-conical point $\xi$.

\begin{lemma}\label{translationLengthLong}
    There exists a constant $R_1=R_1(R_0, \delta)$ such that $d([\xi, g\xi], G o)\le R_1$ for any $\xi\in \Lambda^{\mathrm{nc}} G$ and for any hyperbolic element $g\in G$ with $\tau(g)\ge 2D_0+ 200\delta$.
\end{lemma}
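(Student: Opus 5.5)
Fix $\xi\in\Lambda^{\mathrm{nc}}G$ and a hyperbolic $g$ with $\tau(g)\ge 2D_0+200\delta$. Since $\xi$ is non-conical, $\xi\neq g^{\pm}$, and so $g\xi\neq g^\pm$ as well. The strategy is to show that the line $[\xi,g\xi]$ fellow-travels a long segment of an axis $\gamma$ of $g$. By the \textbf{NEHE Assumption} we may choose $\gamma$ with $d(\gamma,Go)\le R_0$, say $d(ho,x_0)\le R_0$ for some $x_0\in\gamma$. Because $\langle g\rangle$ translates $\gamma$ coarsely, the $g$-translates $g^nho$ stay uniformly close to $\gamma$ and are spaced by roughly $\tau(g)$ along it. So points of the orbit $Go$ lie within a uniform distance of every subsegment of $\gamma$ of length about $\tau(g)+D_1$.

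\textbf{Projections.} Let $y\in\pi_\gamma(\xi)$, using the extended projection. Then $gy\in\pi_{g\gamma}(g\xi)$. The geodesic $g\gamma$ is another axis of $g$, so $d_{\mathrm{Haus}}(\gamma,g\gamma)\le D_0$. By Lemma~\ref{finiteHausdorffDistanceProjection}, any $y'\in\pi_\gamma(g\xi)$ satisfies $d(y',gy)\le 2D_0+100\delta$. Lemma~\ref{translationDistance} gives $d(y,gy)\ge\tau(g)$, and since $y,gy$ both lie within $D_0$ of $\gamma$, it follows that
$$d(y,y')\ge \tau(g)-2D_0-100\delta-O(D_0)\ge 100\delta-O(D_0).$$
This is where the threshold $2D_0+200\delta$ is meant to be used. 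The constants need care: I would write $gy=z\in g\gamma$, project $z$ to $\gamma$, and absorb the errors. Possibly one has to take $y$ on a specific axis so that $gy\in\gamma$ exactly, for instance by choosing $\gamma$ $g$-invariant up to $D_0$. The target is $d(y,y')>24\delta$.

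\textbf{Fellow travelling.} Once $d(y,y')>24\delta$, Corollary~\ref{projectionFellowTravel} gives a subpath of $[\xi,g\xi]$ within Hausdorff distance $24\delta$ of $[y,y']_\gamma$. It then remains to find an orbit point near $[y,y']_\gamma$. The segment $[y,y']_\gamma$ has length at least about $\tau(g)-O(D_0+\delta)$, but its length may be smaller than one translation period, so a point $g^nho$ need not lie near it.

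\textbf{Main obstacle.} The main difficulty is placing an orbit point near this subsegment when $\tau(g)$ is large compared with its length. To handle it, I would instead use the $g$-equivariance of the whole configuration. The segments $g^k[y,y']_\gamma$, for $k\in\mathbb{Z}$, together with short connectors, coarsely cover $\gamma$: the segment $g^k[y,y']$ runs from $g^ky$ to roughly $g^{k+1}y$. Hence some $k$ has $g^k[y,y']$ within $O(D_0+\delta)$ of the point $x_0$, which is near $ho$. Applying $g^{-k}$, the segment $[y,y']$ passes within $O(D_0+\delta)$ of $g^{-k}ho\in Go$, up to the gap of size $O(D_0+\delta)$ between $y'$ and $gy$. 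Combining this with the fellow-travel estimate gives
$$d([\xi,g\xi],Go)\le R_0+2D_0+O(\delta)=:R_1,$$
which depends only on $R_0$ and $\delta$, since $D_0$ depends only on $\delta$.
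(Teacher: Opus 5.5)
Your plan matches the paper's up to the last step: take the NEHE axis $\gamma$ near $ho$, project $\xi$ and $g\xi$ to $y,y'\in\gamma$, use Lemma~\ref{finiteHausdorffDistanceProjection} to get $d(y',gy)\le 2D_0+100\delta$, and then make $[\xi,g\xi]$ fellow-travel $[y,y']_\gamma$ via Corollary~\ref{projectionFellowTravel}. One step, as written, does not close, although the fix is immediate. The extra $O(D_0)$ in your lower bound for $d(y,y')$ is spurious, and with it your inequality does not give $d(y,y')>24\delta$. Since $d(y,gy)\ge\tau(g)$ holds for every point of $X$, the plain triangle inequality gives $d(y,y')\ge d(y,gy)-d(gy,y')\ge\tau(g)-2D_0-100\delta\ge 100\delta$. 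It does not matter that $gy$ lies on $g\gamma$ rather than on $\gamma$, so no special choice of axis is needed. This is exactly where the paper uses the threshold $2D_0+200\delta$.

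The last step is where you genuinely differ from the paper.
\begin{itemize}
\item The paper fixes $[y,z]$ and moves the orbit. It projects $g^nho$ to $x_n\in\gamma$ and uses the upper bound $d(x,gx)\le\tau(g)+D_1$ of Lemma~\ref{translationDistance} to get $d(x_n,x_{n+1})\le\tau(g)+D_1+2D_0+100\delta$. Comparing with $d(y,z)\ge\tau(g)-2D_0-100\delta$, the $\tau(g)$ terms cancel, and some $x_n$ lies within $D_1+4D_0+200\delta$ of $[y,z]$. So the ``obstacle'' you flag costs only a uniform constant.
\item You instead fix $ho$ and move the segment. The translates $g^k[y,y']_\gamma$ are linked by gaps $d(g^ky',g^{k+1}y)=d(y',gy)\le 2D_0+100\delta$. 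Hence $[y,y']_\gamma$ is a coarse fundamental domain for $\langle g\rangle$ on $\gamma$, $x_0$ is near some translate, and you pull back by $g^{-k}$.
\end{itemize}
Your version is the equivariant dual of the paper's step and is valid. Its advantage is that it never uses the upper bound in Lemma~\ref{translationDistance}, only $d(y,gy)\ge\tau(g)$.

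What you must spell out is the covering claim, because the pieces lie on different axes $g^k\gamma$. Project $g^ky$ and $g^ky'$ to points $a_k,b_k\in\gamma$, each at distance at most $D_0$. By $2\delta$-thin quadrilaterals, $[a_k,b_k]_\gamma$ lies within $D_0+2\delta$ of $g^k[y,y']_\gamma$. The intervals $[a_k,b_k]_\gamma$ and $[b_k,a_{k+1}]_\gamma$ (the latter of length at most $4D_0+100\delta$) form a connected chain running from $g^-$ to $g^+$, so they cover $\gamma$. This gives $d(x_0,\bigcup_k g^k[y,y']_\gamma)\le 3D_0+50\delta$, and hence $R_1=R_0+3D_0+74\delta$ rather than your $R_0+2D_0+O(\delta)$. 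The exact multiple of $D_0$ is immaterial, since $D_0$ depends only on $\delta$.
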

\begin{proof}
    For any hyperbolic element $g\in G$ with $\tau(g)\ge 2D_0+ 200\delta$, our \textbf{NEHE Assumption}  ensures the existence of an element $h\in G$, an axis $\gamma$ of $g$ and a point $x\in \gamma$ such that $d(x, h o)\le R_0$. 
    Then for any $n\in \mathbb{Z}$, the points $g^n h o\in G o$ and $g^n x\in g^n\gamma$ also satisfy $d(g^n h o, g^n x)\le R_0$ and $g^n\gamma$ is also an axis of $g$. 
    Hence $d_{\mathrm{Haus}}(g^n\gamma, \gamma)\le D_0$ and $d(g^n ho, \gamma)\le R_0+D_0$ for every $n\in \mathbb{Z}$.
    
    For each $n\in \mathbb{Z}$, fix $x_n\in \pi_{\gamma}(g^n ho)$.
    Then $gx_n\in \pi_{g\gamma}(g^{n+1}ho)$.
    By Lemma~\ref{finiteHausdorffDistanceProjection}, we have $d(gx_n, x_{n+1})\le 2D_0+100\delta$.
    Hence, by Lemma~\ref{translationDistance},
    \begin{align}
        d(x_n, x_{n+1})\le d(x_n, gx_n)+d(gx_n, x_{n+1})\le \tau(g)+D_1+2D_0+100\delta.\label{findOrbitPoints}
    \end{align}
    
    Choose points $y\in \pi_{\gamma}(\xi)$ and $z\in \pi_{\gamma}(g\xi)$. 
    Then we have $g y\in \pi_{g \gamma}(g \xi)$. 
    Since $d_{\mathrm{Haus}}(\gamma, g\gamma)\le D_0$, Lemma~\ref{finiteHausdorffDistanceProjection} implies $d(z, gy)\le 2D_0+100\delta$.
    Hence 
    $$d(y, z)\ge d(y, gy)-d(gy, z)\ge \tau(g) - 2D_0-100\delta \ge 100\delta.$$
    By Corollary~\ref{projectionFellowTravel}, there exists a subpath of the bi-infinite line $[\xi, g\xi]$ that lies within Hausdorff distance $24\delta$ of the subpath $[y, z]$.

    On the other hand, by inequality~\eqref{findOrbitPoints} there exists $n\in \mathbb{Z}$ such that 
    $$d([y, z], x_n)\le \tau(g)+D_1+2D_0+100\delta - d(y, z)\le D_1+4D_0+200\delta.$$ 
    Therefore, for every such $g$, we have 
    \begin{align}
    &d([\xi, g\xi], G o)\notag\\
    \le& d_{\mathrm{Haus}}([\xi, g\xi], [y, z]) + d([y, z], x_n)+d(x_n, g^nho)\notag\\
    \le& 24\delta + D_1+4D_0+200\delta + R_0+D_0\notag\\
    =& R_0+224\delta+5D_0+D_1. \notag
    \end{align}
    
    Setting $R_1 = R_0+224\delta+5D_0+D_1$ completes the proof.
\end{proof}

For hyperbolic elements with short translation length, the bound depends on the specific conjugacy class. The next lemma provides a uniform bound for all elements conjugate to a fixed hyperbolic element.

\begin{lemma}\label{fixHyperbolicElement}
    Let $[g]$ denote the conjugacy class of a hyperbolic element $g$ in $G$. Then there exists a constant $R_{[g]}\ge 0$ such that $d([\xi, g \xi], G o)\le R_{[g]}$ for any $\xi\in \Lambda^{\mathrm{nc}} G$ and for any element $g\in [g]$.
\end{lemma}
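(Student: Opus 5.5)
The plan is to reduce to a single fixed representative $g$ of the conjugacy class, then to a large power of $g$ where Lemma~\ref{translationLengthLong} applies, and finally to pass back to $g$ using thinness of ideal polygons.

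\emph{Reduction to one representative.} Let $k = hgh^{-1}$ be an element of $[g]$ and $\xi \in \Lambda^{\mathrm{nc}}G$. Put $\eta := h^{-1}\xi$; it is again non-conical, since $\Lambda^{\mathrm{nc}}G$ is $G$-invariant. We have $[\xi, k\xi] = h[\eta, g\eta]$ up to the choice of geodesic, and different choices are at uniformly bounded Hausdorff distance (a constant depending only on $\delta$). Since $Go$ is $G$-invariant, $d([\xi,k\xi],Go)$ and $d([\eta,g\eta],Go)$ differ by at most that constant. So it suffices to find $R_{[g]}$ with $d([\eta, g\eta], Go)\le R_{[g]}$ for all $\eta\in\Lambda^{\mathrm{nc}}G$ and this single $g$.

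\emph{Passing to a power.} Since $g$ is a hyperbolic isometry of a $\delta$-hyperbolic space, its stable translation length $\lim_n d(o,g^no)/n$ is positive. Also $\tau(g^m)$ is at least $m$ times the stable length, up to an additive constant depending only on $\delta$; alternatively, one can use Lemma~\ref{translationDistance} along an axis together with the fact that $n\mapsto g^nx$ is a quasi-geodesic. Hence we can fix $m = m(g)$ with $\tau(g^m)\ge 2D_0+200\delta$. The element $g^m$ is hyperbolic, so Lemma~\ref{translationLengthLong} gives $d([\eta, g^m\eta], Go)\le R_1$ for every non-conical $\eta$. Note that $\eta, g\eta,\dots,g^m\eta$ are distinct boundary points, because the fixed points of the hyperbolic elements $g^i$ are conical.

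\emph{Comparison via an ideal polygon.} Consider the ideal $(m+1)$-gon with vertices $\eta, g\eta, \dots, g^m\eta$, whose sides are $g^i[\eta, g\eta]$ for $0\le i<m$ together with $[\eta, g^m\eta]$. Triangulating from the vertex $\eta$ and using that ideal triangles are $2\delta$-thin, every point of $[\eta,g^m\eta]$ lies within $c_m := 2(m-1)\delta$ of the union of the other sides; any fixed bound depending only on $m$ and $\delta$ suffices. Take $p\in[\eta,g^m\eta]$ with $d(p,Go)\le R_1$. Then $p$ is within $c_m$ of some $g^i[\eta,g\eta]$, so
$$d([\eta,g\eta],Go) = d(g^i[\eta,g\eta],Go)\le R_1 + c_m,$$
using $G$-invariance of $Go$ once more. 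Setting $R_{[g]} := R_1 + c_{m(g)}$ plus the constant from the choice of geodesics finishes the proof.

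\emph{Main obstacle.} The only delicate point is justifying that $\tau(g^m)\to\infty$ with a uniform choice of $m$ depending only on $g$. This is standard in hyperbolic spaces, via positivity of the stable translation length for hyperbolic isometries. Everything else is bookkeeping with thin ideal polygons and the $G$-invariance of the orbit $Go$.
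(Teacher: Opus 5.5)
Your proof is correct, but it takes a different route from the paper's.

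\textbf{The paper's argument.} For a fixed $g$, the paper argues by compactness on the boundary. Since $\langle g\rangle$ acts properly and cocompactly on $\partial X\setminus\{g^+,g^-\}$, one can take a compact fundamental domain $K$. On $K$ the visual distance between $\xi$ and $g\xi$ is bounded below by some $c>0$, and the visual-metric estimate then forces $[\xi,g\xi]$ to pass within $-\frac{1}{\varepsilon}\log c$ of $o$. The $\langle g\rangle$-invariance of $\xi\mapsto d([\xi,g\xi],Go)$ extends this to all $\xi\neq g^{\pm}$. Conjugates are then handled exactly as in your first step.

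\textbf{Your argument.} You stay entirely in coarse geometry. You pass to a power $g^m$ with $\tau(g^m)\ge 2D_0+200\delta$, apply Lemma~\ref{translationLengthLong} to it, and return to $g$ using thinness of the ideal $(m+1)$-gon with vertices $\eta,g\eta,\dots,g^m\eta$.

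\textbf{What each approach buys.}
\begin{enumerate}
\item The paper's proof uses neither the NEHE Assumption nor Lemma~\ref{translationLengthLong}. It gives the bound for every $\xi\in\partial X\setminus\{g^{\pm}\}$, not only for non-conical $\xi$.
\item Your proof is valid only under NEHE. That assumption is in force at this point of the paper, so this costs nothing here.
\item In exchange, you avoid the cocompactness of $\langle g\rangle$ on $\partial X\setminus\{g^\pm\}$ and visual metrics altogether. You also get an explicit constant, $R_1+2(m(g)-1)\delta$ plus the bigon constant.
\end{enumerate}

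\textbf{Your ``main obstacle''.} This step is easier than you suggest. Since $d(x,h^nx)\le n\,d(x,hx)$, the stable length $\ell$ satisfies $\ell(h)\le\tau(h)$ for every isometry $h$. Hence $\tau(g^m)\ge \ell(g^m)=m\,\ell(g)$ exactly, with no additive constant. Positivity of $\ell(g)$ for a loxodromic $g$ then finishes it.

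\textbf{Conjugation step.} You also track the choice of geodesic there more carefully than the paper does.
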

\begin{proof}
    Fix a hyperbolic element $g\in G$.
    Note that the action of the subgroup $\langle g\rangle$ on $\partial X\setminus\{g^+, g^-\}$ is proper and cocompact.
    Choose a compact fundamental domain $K$ for this action.
    Let $\varepsilon>0$ be sufficiently small so that the visual metric $d_{o, \varepsilon}$ is well-defined.
    Since the action is proper and $K$ is compact, the distance 
    $d_{o, \varepsilon}(\xi, g\xi)$ is bounded below by a positive constant $c>0$ on $K$.
    By the estimate for the visual metric (\ref{eqn: visual metric}), we have
    $$d([\xi, g\xi], o)\le -\frac{1}{\varepsilon}\log d_{o, \varepsilon}(\xi, g\xi)\le -\frac{1}{\varepsilon}\log c:=R_{g}.$$
    Thus, $d([\xi, g\xi], Go)\le R_g$ for any $\xi\in K$.
    Since the function $\xi\mapsto d([\xi, g\xi], Go)$ is $\langle g\rangle$-invariant, the above estimate holds for every $\xi\in \partial X\setminus\{g^+, g^-\}$.

    Now, let $g^\prime = h^{-1} g h$ be an element conjugate to $g$ for some $h\in G$. For $\xi\in \Lambda^{\mathrm{nc}} G$, we have
    $$
    d([\xi, g^\prime \xi], G o)=d([\xi, h^{-1} g h\xi], G o)=d([h\xi, g h\xi], G o)\le R_g,
    $$
    where the last inequality follows from the previous paragraph because $h\xi \in \Lambda^{nc}G$.
    Hence, we can set $R_{[g]} = R_g$, where $[g]$ denotes the conjugacy class containing $g$.
    This completes the proof.
\end{proof}

Combining the previous two lemmas, we obtain a uniform bound valid for all hyperbolic elements.

\begin{lemma}\label{translationNearOrbit}
    There exists a constant $R_2\ge R_0$ such that $d([\xi, g\xi], G o)\le R_2$ for any $\xi\in \Lambda^{\mathrm{nc}} G$ and for any hyperbolic element $g\in G$.
\end{lemma}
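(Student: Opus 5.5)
The plan is to combine Lemma~\ref{translationLengthLong} with Lemma~\ref{fixHyperbolicElement}, using Lemma~\ref{stableTranslationLength} to reduce the short-translation-length case to finitely many constants. Set $r := 2D_0 + 200\delta$, the threshold appearing in Lemma~\ref{translationLengthLong}, and split hyperbolic elements $g\in G$ into two classes according to whether $\tau(g)\ge r$ or $\tau(g)<r$.

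\textbf{Long translation length.} If $\tau(g)\ge r$, then Lemma~\ref{translationLengthLong} applies directly and gives $d([\xi, g\xi], Go)\le R_1$ for every $\xi\in\Lambda^{\mathrm{nc}}G$, where $R_1 = R_1(R_0,\delta)$ is independent of $g$ and $\xi$.

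\textbf{Short translation length.} By Lemma~\ref{stableTranslationLength}, which uses the \textbf{NEHE Assumption}, the hyperbolic elements with $\tau(g)<r$ fall into finitely many conjugacy classes $[g_1],\dots,[g_k]$. Translation length is a conjugacy invariant, so every hyperbolic element with $\tau<r$ lies in one of these classes. For each class, Lemma~\ref{fixHyperbolicElement} provides a constant $R_{[g_i]}$ such that $d([\xi, g\xi], Go)\le R_{[g_i]}$ for all $\xi\in\Lambda^{\mathrm{nc}}G$ and all $g\in[g_i]$.

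\textbf{Conclusion.} Take
$$R_2 := \max\{R_0,\ R_1,\ R_{[g_1]},\dots,R_{[g_k]}\}.$$
This is a finite maximum, so $R_2$ is finite, and by construction $R_2\ge R_0$. Since every hyperbolic $g$ is covered by one of the two cases, the bound $d([\xi,g\xi],Go)\le R_2$ holds for all $\xi\in\Lambda^{\mathrm{nc}}G$ and all hyperbolic $g\in G$.

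The quantity $d([\xi,g\xi],Go)$ could depend on the choice of the geodesic $[\xi, g\xi]$. This causes no difficulty: the earlier lemmas are read with the same convention (an arbitrary choice of geodesic), and any two such geodesics lie within uniformly bounded Hausdorff distance of each other, so at worst a constant depending only on $\delta$ is added.

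There is no real obstacle here. The only point to check is that the finiteness coming from Lemma~\ref{stableTranslationLength} is exactly what turns the class-dependent bounds of Lemma~\ref{fixHyperbolicElement} into a uniform one. The case where $\xi$ is fixed by $g$ does not arise, because $g\xi\neq\xi$ for every non-conical $\xi$, as noted before these lemmas.
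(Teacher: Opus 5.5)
Your proposal is correct and matches the paper's proof: split by whether $\tau(g)\ge 2D_0+200\delta$, apply Lemma~\ref{translationLengthLong} to the long elements, apply Lemma~\ref{fixHyperbolicElement} to the short ones (which fall into finitely many conjugacy classes by Lemma~\ref{stableTranslationLength}), and take the maximum. Including $R_0$ in the maximum is harmless but redundant, since $R_1=R_0+224\delta+5D_0+D_1$ already exceeds $R_0$.
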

\begin{proof}
    Let $\mathcal{C}$ denote the collection of conjugacy classes of hyperbolic elements with stable translation length less than $2D_0+200\delta$, which is a finite set by Lemma~\ref{stableTranslationLength}.
    Let $R_c$ be the constant given by Lemma~\ref{fixHyperbolicElement} for each conjugacy class $c \in \mathcal{C}$. For hyperbolic elements with $\tau(g) \ge 2D_0+200\delta$, let $R_1$ be supplied by Lemma~\ref{translationLengthLong}.
    Defining $R_2 = \max\left( \{R_c : c \in \mathcal{C} \} \cup \{R_1\} \right)$ yields the desired constant.
\end{proof}

We are now ready to prove the main proposition.

\begin{figure}[htbp]
\centering
\begin{tikzpicture}[
    scale=1,
    point/.style={circle, fill=black, inner sep=1.2pt},
    boundarypoint/.style={circle, fill=red!70, inner sep=1.5pt},
    neighborhood/.style={line width=1.2pt},
    font=\small
]

% 定义圆弧的半径和角度范围
\def\radius{5}
\def\startangle{50}
\def\endangle{130}

% 画圆弧表示边界 ∂X
\draw[thick] (\startangle:\radius) arc (\startangle:\endangle:\radius);
\node at (40:5) {$\partial X$};

% 标记三个边界点：点保持在圆弧上（半径5）
\coordinate (xi) at (110:\radius);
\coordinate (gNxi) at (80:\radius);
\coordinate (eta) at (65:\radius);

% 标注点：只调整标签位置，不移动点
% 使用 label={[shift={...}]...} 来微调标签位置
\node[boundarypoint, label={[shift={(0,-0.6)}]below:$\xi$}] at (xi) {};
\node[boundarypoint, label={[shift={(0,-0.4)}]below:$g_N\xi$}] at (gNxi) {};
\node[boundarypoint, label=above:$\eta$] at (eta) {};

% 计算旋转角度
\def\rotangle#1{%
    \pgfmathparse{#1+90}%
    \pgfmathresult%
}

% ξ 的邻域 U
\pgfmathsetmacro{\xitheta}{110}
\pgfmathsetmacro{\xirot}{\xitheta+90}
\draw[neighborhood, blue!80, rotate around={\xirot:(xi)}] 
    (xi) ellipse (1.6 and 0.6);
\node[blue!80] at ($(xi)+(0.8,1.2)$) {$U$};

% ξ 的邻域 V
\draw[neighborhood, red!80, rotate around={\xirot:(xi)}] 
    (xi) ellipse (1.0 and 0.4);
\node[red!80] at ($(xi)+(-0.5,0.7)$) {$V$};

% g_Nξ 的邻域 g_N U
\pgfmathsetmacro{\gNtheta}{80}
\pgfmathsetmacro{\gNrot}{\gNtheta+90}
\draw[neighborhood, green!70!black, rotate around={\gNrot:(gNxi)}] 
    (gNxi) ellipse (0.9 and 0.4);
\node[green!70!black] at ($(gNxi)+(0.4,0.7)$) {$g_NU$};

% g_n g_N U
\draw[neighborhood, orange!80, rotate around={\xirot:(xi)}] 
    (xi) ellipse (0.6 and 0.3);
% 将g_n g_NU的标注向右移动更多
\node[orange!80] at ($(xi)+(1.2,-0.3)$) {$g_n g_NU$};

\end{tikzpicture}
\caption{Illustrating the hyperbolic elements $g_ng_N$ in the proof of Proposition~\ref{lineNearOrbit}.}
\label{fig:neighborhoods}
\end{figure}

\begin{proof}[Proof of Proposition \ref{lineNearOrbit}]
    Suppose, for the sake of contradiction, that for each $n\in \mathbb{N}$ there exists an element $g_n\in G$ with $g_n\xi\ne \xi$ such that $d([\xi, g_n\xi], G o)\ge n$.
    In particular, $d([\xi, g_n\xi], o)\ge n$, which implies that the visual distance between $g_n\xi$ and $\xi$ tends to $0$.
    Hence $\lim\limits_{n\to\infty} g_n\xi = \xi$.
    
    By the convergence property, after passing to a subsequence, we may assume that $g_n \to \zeta$ locally uniformly on $\overline{X}\setminus \{\eta\}$ for some $\eta, \zeta\in \partial X$. We claim that $\zeta=\xi$. Indeed, if $\zeta\ne \xi$, then every subsequence $\{g_n\}$ would converge to some $\zeta\ne \xi$ locally uniformly on $\overline{X}\setminus \{\xi\}$. This would imply that $\xi$ is a conical point, a contradiction.
    
    Fix an integer $N$ such that $N > R_2 + 2\delta$ and $g_N\xi \ne \eta$. We claim that the product $g_n g_N$ is a hyperbolic element for all sufficiently large $n$.

    Choose neighborhoods $U$ and $V$ of $\xi$ in $\partial X$ such that $\xi \in V \subset \overline{V} \subsetneq U$ and $\eta \notin g_N \overline{U}$. Such neighborhoods exist because $\partial X$ is a compact Hausdorff space with no isolated points and $g_N \xi \ne \eta$.
    Since $g_n x \to \xi$ locally uniformly on $\overline{X}\setminus \{\eta\}$, for sufficiently large $n$ we have $g_n (g_N \overline{U}) \subseteq V \subsetneq U$. See Figure \ref{fig:neighborhoods}. By Lemma~\ref{producingHyperbolicElements}, this implies that $g_n g_N$ is hyperbolic, proving the claim.

    Now let $n$ be large enough so that $g_n g_N$ is hyperbolic and $n > R_2 + 2\delta$.
    By Lemma~\ref{translationNearOrbit}, we have $d([\xi, g_n g_N \xi], G o) \le R_2$.
    Since the ideal triangle $\triangle(\xi, g_n\xi, g_ng_N\xi)$ is $2\delta$-thin, the geodesic $[\xi, g_n g_N \xi]$ lies within $2\delta$ of $[\xi, g_n \xi] \cup [g_n \xi, g_n g_N \xi]$. See Figure \ref{fig:ideal_triangle}.
    Therefore,
    $$
    d([\xi, g_n \xi], G o) \le R_2 + 2\delta \quad \text{or} \quad d([g_n \xi, g_n g_N \xi], G o) \le R_2 + 2\delta.
    $$
    
    However, by our choice of $n$ and $N$, we have $d([\xi, g_n\xi], Go) \ge n > R_2+2\delta$ and $d([\xi, g_{N}\xi], Go) \ge N > R_2+2\delta$.
    Hence $d([g_n\xi, g_ng_N\xi], Go) > R_2+2\delta$, which is a contradiction.
\end{proof}

% 需要在导言区添加以下包：
% \usepackage{tikz}
% \usetikzlibrary{calc}

\begin{figure}[htbp]
\centering
\begin{tikzpicture}[
    scale=1.2,
    point/.style={circle, fill=black, inner sep=1.2pt},
    boundarypoint/.style={circle, fill=red!70, inner sep=1.5pt},
    orbitpoint/.style={circle, fill=blue!70, inner sep=1.2pt},
    geodesic/.style={thick, black},
    font=\small
]

% 定义理想三角形的三个顶点
\coordinate (xi) at (90:3);
\coordinate (gnxi) at (210:3);
\coordinate (gngNxi) at (330:3);

% 标注三个边界点
\node[boundarypoint, label=above:$\xi$] at (xi) {};
\node[boundarypoint, label=below left:$g_n\xi$] at (gnxi) {};
\node[boundarypoint, label=below right:$g_n g_N \xi$] at (gngNxi) {};

% 计算每条边向内凸的控制点：向三角形中心（原点）偏移更多（80%）
% 边 [ξ, g_n g_N ξ] 的控制点：取中点并向中心偏移80%
\coordinate (mid1) at ($(xi)!0.5!(gngNxi)$);
\coordinate (ctrl1) at ($(mid1)!0.8!(0,0)$); % 80% 向中心偏移

% 边 [ξ, g_n ξ] 的控制点
\coordinate (mid2) at ($(xi)!0.5!(gnxi)$);
\coordinate (ctrl2) at ($(mid2)!0.8!(0,0)$);

% 边 [g_n ξ, g_n g_N ξ] 的控制点
\coordinate (mid3) at ($(gnxi)!0.5!(gngNxi)$);
\coordinate (ctrl3) at ($(mid3)!0.8!(0,0)$);

% 画理想三角形的三条边：使用向内凸的圆弧
\draw[geodesic] (xi) .. controls (ctrl1) .. (gngNxi);
\draw[geodesic] (xi) .. controls (ctrl2) .. (gnxi);
\draw[geodesic] (gnxi) .. controls (ctrl3) .. (gngNxi);

% 在边 [ξ, g_n g_N ξ] 附近画一个点 Go
\coordinate (Go) at ($(mid1)!0.3!(0,0)$);
\node[orbitpoint] at (Go) {};
\node[above right] at (Go) {$ho\in Go$};

\end{tikzpicture}
\caption{Ideal triangle and a nearby orbit point.}
\label{fig:ideal_triangle}
\end{figure}

\subsection{Identifying the cusp regions}\label{horoballConstruction}

We now introduce the key geometric sets that will be used to analyze the dynamics near a non-conical limit point. These sets are essential for establishing the geometric finiteness criterion in the sufficiency direction of Theorem~\ref{geometricallyFiniteNearOrbit}. 

Our strategy is to construct, for each non-conical limit point $\xi$, a geometric object $B_{\xi,R}$, which plays the role of an invariant horoball. We will show that for sufficiently large $R$, the translates of these sets are pairwise disjoint, and that their truncations $S_{\xi,R} = B_{\xi,R} \cap Y_R$ have compact quotient under the stabilizer $G_\xi$. This will prove that every $\xi \in \Lambda^{nc}G$ is a bounded parabolic point, thereby completing the argument.

Recall the \textbf{NEHE Assumption} that the constant $R_0$ satisfies $d(\gamma, G o) \le R_0$ for every hyperbolic element $g \in G$ and some axis $\gamma$ of $g$. 

For $R \ge R_0$ and $\xi \in \Lambda^{nc} G$, define $Y_R := \{x \in X : d(x, Go) \le R\}$ and 
\[
B_{\xi, R} := \left\{
x \in X \;\middle|\;
\begin{aligned}
&\text{there exists a geodesic ray } \gamma = [x, \xi] \\
&\text{such that } d(\gamma, Go) \ge R
\end{aligned}
\right\}.
\]
 
The following lemma justifies the equivariance of this construction.

\begin{lemma}\label{equivariant}
    For $g \in G$, $\xi \in \Lambda^{nc} G$, and $R \ge R_0$, we have $g B_{\xi, R} = B_{g\xi, R}$.
\end{lemma}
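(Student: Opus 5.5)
The plan is a direct verification from the definitions: isometries carry geodesic rays to geodesic rays, preserve distances, and preserve the orbit $Go$ as a set. Before that, note that $g\xi \in \Lambda^{nc}G$ whenever $\xi\in\Lambda^{nc}G$: the limit set is $G$-invariant, and conicality is preserved by $G$. Indeed, if $h_n x\to b$ locally uniformly off $\xi$ and $h_n\xi\to a$, then $h_ng^{-1}$ witnesses that $g\xi$ is conical. Hence $B_{g\xi,R}$ is defined.

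For the inclusion $gB_{\xi,R}\subseteq B_{g\xi,R}$, take $x\in B_{\xi,R}$. By definition there is a geodesic ray $\gamma=[x,\xi]$ with $d(\gamma,Go)\ge R$. Then $g\gamma$ is a geodesic ray starting at $gx$. Its endpoint is $g\xi$, because the extension of $g$ to $\partial X$ sends the asymptotic class of $\gamma$ to that of $g\gamma$. Since $g$ is an isometry and $g(Go)=Go$,
$$d(g\gamma,Go)=d(g\gamma,gGo)=d(\gamma,Go)\ge R.$$
Thus $gx\in B_{g\xi,R}$.

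For the reverse inclusion, apply the same argument to $g^{-1}$ and the non-conical point $g\xi$. This gives $g^{-1}B_{g\xi,R}\subseteq B_{\xi,R}$, that is, $B_{g\xi,R}\subseteq gB_{\xi,R}$, and equality follows.

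There is no real obstacle. The only point needing care is that the boundary action is the one induced from $X$, so $g[x,\xi]$ is indeed a ray asymptotic to $g\xi$. The hypothesis $R\ge R_0$ plays no role in this lemma.
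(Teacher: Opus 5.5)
Your proposal is correct and follows the same approach as the paper: both transport the defining geodesic ray $[x,\xi]$ by the isometry $g$ and use that $gGo=Go$, so $d(g\gamma,Go)=d(\gamma,Go)$. You are slightly more careful than the paper's one-line argument in two places. You check that $g\xi\in\Lambda^{nc}G$, so that $B_{g\xi,R}$ is defined, and you obtain the reverse inclusion explicitly by applying the argument to $g^{-1}$.
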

\begin{proof}
    Observe that $x \in B_{\xi, R}$ if and only if there exists a geodesic ray $\gamma = [x, \xi]$ lying outside $Y_R$. 
    This is equivalent to that $g\gamma = [gx, g\xi]$ lies outside $g Y_R = Y_R$, which means $gx \in B_{g\xi, R}$.
\end{proof}

The sets $B_{\xi, R}$ enjoy a crucial dynamical property: for a fixed non-conical limit point $\xi$, their translates under the group action become pairwise disjoint once $R$ is sufficiently large. This is precisely the content of the next lemma.

Let $R_\xi$ be the constant provided by Proposition~\ref{lineNearOrbit}.
\begin{lemma}\label{horoballDisjoint}
    For any $\xi\in \Lambda^{nc} G$ and for any $g\in G$ with $g\xi \ne \xi$, we have $B_{\xi, R} \cap B_{g\xi, R} = \varnothing$ for all $R > R_\xi + 2\delta$.
\end{lemma}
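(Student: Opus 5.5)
Argue by contradiction. Suppose $x \in B_{\xi,R} \cap B_{g\xi,R}$ for some $R > R_\xi + 2\delta$. By the definition of these sets there are geodesic rays $\gamma_1 = [x,\xi]$ and $\gamma_2 = [x,g\xi]$ with
\[
d(\gamma_1, Go) \ge R \quad\text{and}\quad d(\gamma_2, Go) \ge R.
\]
Since $g\xi \ne \xi$, the points $x$, $\xi$, $g\xi$ span an ideal triangle $\triangle(x\,\xi\,g\xi)$. One side is $\gamma_1 \cup \gamma_2$ at the vertex $x$, and the third side is a bi-infinite geodesic $[\xi, g\xi]$. As recalled in the preliminaries, ideal triangles in a $\delta$-hyperbolic space are $2\delta$-thin. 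Hence every point of $[\xi,g\xi]$ lies within $2\delta$ of $\gamma_1 \cup \gamma_2$.

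Next apply Proposition~\ref{lineNearOrbit}. Since $\xi \in \Lambda^{nc}G$ and $g\xi \ne \xi$, it gives $d([\xi,g\xi], Go) \le R_\xi$. Because $Go$ is a discrete set and $X$ is proper, this distance is attained: there are a point $y \in [\xi,g\xi]$ and an element $h \in G$ with $d(y, ho) \le R_\xi$. (If one prefers to avoid attainment, take $d(y,ho) \le R_\xi + \epsilon$ with $\epsilon$ small enough that $R_\xi + \epsilon + 2\delta < R$; the hypothesis leaves room for this.) By thinness, pick $z \in \gamma_1 \cup \gamma_2$ with $d(y,z) \le 2\delta$. Then
\[
d(z, ho) \le R_\xi + 2\delta < R.
\]
This contradicts the fact that both rays stay at distance at least $R$ from $Go$. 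Therefore $B_{\xi,R} \cap B_{g\xi,R} = \varnothing$.

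The only delicate point is the geodesic $[\xi,g\xi]$. Proposition~\ref{lineNearOrbit} is stated for "$[\xi,g\xi]$", and geodesics between boundary points need not be unique. In the thin-triangle step we are free to choose which bi-infinite geodesic serves as the third side. So the plan is to take exactly the geodesic to which Proposition~\ref{lineNearOrbit} applies; its proof works for any choice, since the contradiction argument there is insensitive to the choice.

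If one insists on a specific geodesic, a fallback is to use that any two geodesics with the same endpoints are at Hausdorff distance bounded by a constant depending only on $\delta$. This costs a constant, which can be absorbed by enlarging $R_\xi$, but that would not match the stated threshold $R_\xi + 2\delta$. So I would use the first approach: the statement as phrased holds for an arbitrary choice of $[\xi,g\xi]$.
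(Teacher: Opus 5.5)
Your proof is correct and is the same argument as the paper's: $2\delta$-thinness of the ideal triangle with vertices $x,\xi,g\xi$, combined with Proposition~\ref{lineNearOrbit}, gives a point on one of the two rays at distance at most $R_\xi+2\delta<R$ from $Go$, which is a contradiction. Your $\epsilon$-version is the safer way to choose the point on $[\xi,g\xi]$, since attainment of the infimum follows from both endpoints being non-conical (so $d(\cdot,Go)\to\infty$ along the line) rather than from discreteness of $Go$ alone; your remark that the proof of Proposition~\ref{lineNearOrbit} works for any choice of $[\xi,g\xi]$ is also accurate.
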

\begin{proof}
    Suppose, for contradiction, that there exist some $R > R_\xi + 2\delta$ and a point $x \in B_{\xi, R} \cap B_{g\xi, R}$.
    By definition of $B_{\xi, R}$ and $B_{g\xi, R}$, there exist geodesic rays $[x, \xi]$ and $[x, g\xi]$ such that $d([x, \xi], G o) \ge R$ and $d([x, g\xi], G o) \ge R$.
    
    By Proposition~\ref{lineNearOrbit}, we have $d([\xi, g\xi], G o) \le R_\xi$.
    Consider the ideal triangle $\triangle(\xi, g\xi, x)$, which is $2\delta$-thin.
    Therefore, any point on the side $[\xi, g\xi]$ lies within distance $2\delta$ of the union of the other two sides $[x,\xi]$ and $[x,g\xi]$.
    
    Let $p \in [\xi, g\xi]$ be a point such that $d(p, G o) = d([\xi, g\xi], G o) \le R_\xi$.
    Then there exists a point $q$ on either $[x,\xi]$ or $[x,g\xi]$ with $d(p,q) \le 2\delta$.
    Consequently,
    $$
    d(q, G o) \le d(p, G o) + 2\delta \le R_\xi + 2\delta  < R.
    $$
    Hence, the side containing $q$ satisfies $d([x, \xi], G o) < R$ or $d([x, g\xi], G o) < R$.
    This contradicts the assumption that both $d([x, \xi], G o) \ge R$ and $d([x, g\xi], G o) \ge R$.
\end{proof}

\begin{lemma}\label{eventuallyHoroball}
    Let $\xi\in \Lambda^{nc} G$ and $R\ge R_0$. Then any geodesic ray $\gamma$ with $\gamma(\infty)=\xi$ is eventually contained in $B_{\xi, R}$; that is, there exists $a\ge0$ such that $\gamma([a, \infty))\subset B_{\xi, R}$.
\end{lemma}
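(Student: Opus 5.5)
The plan is to exploit that $\xi$ is non-conical, so that any geodesic ray toward $\xi$ eventually leaves every bounded neighborhood of $Go$. By the characterization of conical points recalled in the preliminaries, this holds for every geodesic ray ending at $\xi$, not just one.

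\emph{Step 1: escape along $\gamma$.} Fix $R \ge R_0$ and a geodesic ray $\gamma$ with $\gamma(\infty)=\xi$. Since $\xi\in\Lambda^{nc}G$, there is no sequence $t_n\to\infty$ with $d(\gamma(t_n),Go)$ bounded. Hence $d(\gamma(t),Go)\to\infty$ as $t\to\infty$. Fix a margin $M$ (a multiple of $\delta$, say $M=10\delta$, to be tuned) and choose $a_0$ with $d(\gamma(t),Go)\ge R+M$ for all $t\ge a_0$.

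\emph{Step 2: choosing the witnessing ray.} For $x=\gamma(t)$ with $t\ge a_0$, the natural witness is the subray $\gamma|_{[t,\infty)}$. This is itself a geodesic ray $[x,\xi]$, and every point on it is at distance at least $R+M\ge R$ from $Go$. So $x\in B_{\xi,R}$ directly by the definition of $B_{\xi,R}$, and we may take $a=a_0$; the margin $M$ is then not even needed. The only subtlety is whether $B_{\xi,R}$ requires the condition for \emph{some} geodesic ray or for \emph{every} one. As defined in the paper it says ``there exists a geodesic ray'', so the subray suffices.

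\emph{Step 3: robust version, if needed.} If later arguments need the stronger property that every ray $[x,\xi]$ avoids $Y_R$, I would use the margin $M$ from Step 1. Any two rays from $x$ to $\xi$ are uniformly close (within about $8\delta$, by thinness of ideal bigons). Therefore every ray $[\gamma(t),\xi]$ with $t\ge a_0$ stays at distance at least $R+M-8\delta\ge R$ from $Go$.

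The main (mild) obstacle is Step 1: converting ``non-conical'' into ``$d(\gamma(t),Go)\to\infty$'' for this particular ray $\gamma$. This relies on the fact, stated in the preliminaries, that the geodesic-ray characterization of conical points is independent of the chosen ray, which in turn follows because two rays to the same boundary point are eventually within uniformly bounded Hausdorff distance.
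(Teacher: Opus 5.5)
Your proof is correct and is essentially the paper's argument, run directly rather than by contradiction: both take the subray $\gamma|_{[t,\infty)}$ as the witnessing ray $[\gamma(t),\xi]$, and both use that non-conicality of $\xi$ forces $d(\gamma(t),Go)\to\infty$. Your Step 3 (and the margin $M$) is unnecessary, since $B_{\xi,R}$ is defined with an existential quantifier over geodesic rays.
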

\begin{proof}
    Suppose, for contradiction, that there exists a geodesic ray $\gamma$ with $\gamma(\infty)=\xi$ that fails to be eventually contained in $B_{\xi, R}$. 
    Then for every $n\in \mathbb{N}$ the intersection $\gamma([n, \infty))\setminus B_{\xi, R}$ is nonempty. 
    Thus, we can choose a sequence $\{t_n\}$ with $t_n \ge n$ and $\gamma(t_n)\notin B_{\xi, R}$ for all $n$.
    
    Since $\gamma(t_n) \notin B_{\xi, R}$, the definition of $B_{\xi, R}$ yields 
    $$d(\gamma([t_n, \infty)), Go)<R.$$
    Hence, $\xi$ is a conical limit point, contradicting the assumption that $\xi \in \Lambda^{nc} G$.
\end{proof}
In particular, $B_{\xi, R}$ is non-empty and unbounded for any $R\ge R_0$.

\begin{lemma}\label{horoballClosed}
    Let $\xi\in \Lambda^{nc}G$ and $R\ge R_\xi + 2\delta$. Then $B_{\xi, R}$ is a closed subset in $X$.
\end{lemma}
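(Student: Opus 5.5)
We show that the complement of $B_{\xi,R}$ is open, or equivalently that $B_{\xi,R}$ is sequentially closed. So let $x_n\in B_{\xi,R}$ with $x_n\to x\in X$, and choose geodesic rays $\gamma_n=[x_n,\xi]$ with $d(\gamma_n,Go)\ge R$.

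\emph{Step 1: a limiting ray.} Since $X$ is proper, Arzel\`a--Ascoli lets us pass to a subsequence so that the $\gamma_n$, parametrized by arc length from $x_n$, converge uniformly on compact sets to a geodesic ray $\gamma$ starting at $x$. Each $\gamma_n$ ends at $\xi$ and $x_n\to x$, so the rays $\gamma_n$ stay within a uniform distance (about $\delta$) of each other on long initial segments. This uniform fellow-travelling passes to the limit and gives $\gamma(\infty)=\xi$. Thus $\gamma$ is a geodesic ray $[x,\xi]$.

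\emph{Step 2: the distance bound survives.} Take any point $\gamma(t)$. It is the limit of the points $\gamma_n(t)$, and each satisfies $d(\gamma_n(t),Go)\ge R$. Since $y\mapsto d(y,Go)$ is $1$-Lipschitz, we get $d(\gamma(t),Go)\ge R$. Hence $d(\gamma,Go)\ge R$, so $x\in B_{\xi,R}$.

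The main obstacle is Step 1, namely making sure the limit ray really ends at $\xi$ rather than elsewhere on the boundary. The argument above does not use the hypothesis $R\ge R_\xi+2\delta$, which may mean this hypothesis is only a convenience (or is needed for a later step of the paper). If the convergence of endpoints is problematic, the fallback is a $\delta$-thin triangle argument. For $n$ large, the ideal triangle $\triangle(x,x_n,\xi)$ is $2\delta$-thin with short side $[x,x_n]$. Consequently an arbitrary geodesic ray $[x,\xi]$ lies within $2\delta+d(x,x_n)$ of $\gamma_n$ outside a bounded initial piece. On its own this only gives $d\ge R-2\delta-\epsilon$, so the limiting-ray argument of Steps 1 and 2 is the one I would rely on for the sharp inequality $\ge R$.
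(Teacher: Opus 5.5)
Your proposal is correct and follows essentially the same route as the paper: you take an Arzel\`a--Ascoli limit $\gamma=[x,\xi]$ of the rays $[x_n,\xi]$, and the bound $d(\gamma,Go)\ge R$ survives the limit because $d(\cdot,Go)$ is $1$-Lipschitz. You additionally justify why the limit ray ends at $\xi$, which the paper takes for granted, and you are right that the hypothesis $R\ge R_\xi+2\delta$ plays no role in this argument.
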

\begin{proof}
    Let $\{x_n\}$ be a sequence of points in $B_{\xi, R}$ converging to a point $x\in X$.
    Then there exists a sequence $\{\gamma_n=[x_n, \xi]\}$ of geodesic rays such that $d(\gamma_n, Go)\ge R$.
    By the Arzelà–Ascoli lemma, there exists a subsequence $\{n_i\}$ such that $\gamma_{n_i}$ converges locally uniformly to a geodesic ray $\gamma=[x, \xi]$.
    Then $d(\gamma, Go)\ge R$, which implies $x\in B_{\xi, R}$. Thus, $B_{\xi, R}$ is closed.
\end{proof}

For any non-conical limit point $\xi\in \Lambda^{nc} G$, let $S_{\xi, R}=B_{\xi, R}\cap Y_{R}$. By Lemma~\ref{horoballClosed}, $S_{\xi, R}$ is a closed subset of $Y_R$, and by Lemma~\ref{equivariant} we have $gS_{\xi, R}=S_{g\xi, R}$.

\begin{corollary}\label{disjoint}
    For any $\xi\in \Lambda^{nc} G$, $g\in G$ with $\xi\ne g\xi$, and for any $R > R_\xi + 2\delta$, we have $S_{\xi, R} \cap S_{g\xi, R} = \varnothing$.
\end{corollary}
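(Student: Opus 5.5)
This follows at once from Lemma~\ref{horoballDisjoint}, using the definition $S_{\xi,R}=B_{\xi,R}\cap Y_R$. By definition we have the inclusions $S_{\xi,R}\subset B_{\xi,R}$ and $S_{g\xi,R}=B_{g\xi,R}\cap Y_R\subset B_{g\xi,R}$. For the second set, the identity $gS_{\xi,R}=S_{g\xi,R}$ from Lemma~\ref{equivariant} confirms that it is the set attached to $g\xi$. That identity uses only the $G$-invariance of $Y_R$ and the hypothesis $R\ge R_0$.

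Hence
\[
S_{\xi,R}\cap S_{g\xi,R}\subset B_{\xi,R}\cap B_{g\xi,R}.
\]
Now assume $g\xi\ne\xi$ and $R>R_\xi+2\delta$. Lemma~\ref{horoballDisjoint} then gives $B_{\xi,R}\cap B_{g\xi,R}=\varnothing$, and the corollary follows.

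The constants need one check. The sets $B_{\xi,R}$ are only defined for $R\ge R_0$, so we need $R_\xi+2\delta\ge R_0$. We may enlarge $R_\xi$ at will, since Proposition~\ref{lineNearOrbit} remains true for any larger constant. So we may assume $R_\xi\ge R_0$, and then every $R>R_\xi+2\delta$ is admissible.

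The argument has no real obstacle: all the geometric content, namely the thin ideal triangle $\triangle(\xi,g\xi,x)$ and the bound $d([\xi,g\xi],Go)\le R_\xi$, is already contained in Lemma~\ref{horoballDisjoint}. The only care needed is this compatibility of constants.
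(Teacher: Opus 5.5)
Your proof is correct and matches the paper's argument: the paper also derives the corollary in one line from Lemma~\ref{horoballDisjoint}, via the inclusions $S_{\xi,R}\subset B_{\xi,R}$ and $S_{g\xi,R}\subset B_{g\xi,R}$. Your side remark about enlarging $R_\xi$ to ensure $R\ge R_0$ is harmless but not needed, because the definition of $B_{\xi,R}$ and the proof of Lemma~\ref{horoballDisjoint} make sense for every $R>0$ and never use $R\ge R_0$.
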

\begin{proof}
    This follows directly from Lemma~\ref{horoballDisjoint}.
\end{proof}

\begin{lemma}\label{boundaryInCocompact}
    For every $\xi\in \Lambda^{nc} G$ and $R> R_\xi+2\delta$, the set $S_{\xi, R}$ is an unbounded closed subset of $Y_R$. Moreover, $\xi$ belongs to the limit set of $S_{\xi, R}$.
\end{lemma}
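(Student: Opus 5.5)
Closedness is immediate: $B_{\xi,R}$ is closed by Lemma~\ref{horoballClosed} (since $R>R_\xi+2\delta$), and $Y_R$ is closed as a closed metric neighborhood, so $S_{\xi,R}=B_{\xi,R}\cap Y_R$ is closed in $Y_R$. The substance is to produce a sequence of points of $S_{\xi,R}$ converging to $\xi$; this gives both unboundedness and $\xi\in$ the limit set of $S_{\xi,R}$.

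I plan to use geodesic rays ending at $\xi$ that start near the orbit. Since $\xi\in\Lambda G$, choose $h_n\in G$ with $h_n o\to\xi$. Let $\gamma_n=[h_n o,\xi]$ be a geodesic ray. By Lemma~\ref{eventuallyHoroball}, $\gamma_n$ is eventually contained in $B_{\xi,R}$. Its starting point $h_n o$ lies in $Y_R$, hence the set $T_n=\{t\ge 0: \gamma_n(t)\in Y_R\}$ is nonempty. The key step is to find a point of $\gamma_n$ lying in both $B_{\xi,R}$ and $Y_R$. Take $a_n=\inf\{a\ge0:\gamma_n([a,\infty))\subset B_{\xi,R}\}$. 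Since $B_{\xi,R}$ is closed, $\gamma_n(a_n)\in B_{\xi,R}$. I claim $\gamma_n(a_n)\in Y_R$ as well. If $a_n=0$, this holds because $\gamma_n(0)=h_n o$. If $a_n>0$, there are $t_k\uparrow a_n$ with $\gamma_n(t_k)\notin B_{\xi,R}$. By definition of $B_{\xi,R}$ applied to the subray $\gamma_n|_{[t_k,\infty)}$, that subray meets the open set $\{d(\cdot,Go)<R\}$. Because the subray $\gamma_n|_{[a_n,\infty)}$ lies in $B_{\xi,R}$, it is natural to take it as a witness ray; more carefully, every subray $\gamma_n|_{[s,\infty)}$ with $s> a_n$ avoids $\{d<R\}$ only if it is itself a witness. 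The clean way to see this: define instead $s_n=\sup\{t: d(\gamma_n(t),Go)\le R\}$, which is finite because $\gamma_n(t)\in B_{\xi,R}$ for large $t$ and $\xi$ is non-conical (the argument of Lemma~\ref{eventuallyHoroball}). Then $x_n=\gamma_n(s_n)$ satisfies $d(x_n,Go)=R$ by continuity, so $x_n\in Y_R$. Moreover the subray $[x_n,\xi]\subset\gamma_n$ satisfies $d(\cdot,Go)\ge R$ everywhere, so $x_n\in B_{\xi,R}$. Hence $x_n\in S_{\xi,R}$; this sup-version is the argument I will actually write.

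It remains to show $x_n\to\xi$. Here $x_n$ lies on the ray $[h_n o,\xi]$. Since $h_n o\to\xi$, we have $d(o,[h_n o,\xi])\to\infty$; this follows by the visual-metric estimate \eqref{eqn: visual metric}, or by thin triangles with $[o,\xi]$. Every point of $\gamma_n$ is therefore far from $o$, and all of $\gamma_n$ converges to $\xi$ uniformly in the cone topology. Indeed, for $y\in\gamma_n$, the Gromov product $\langle y,\xi\rangle_o\gtrsim d(o,\gamma_n)-O(\delta)$. Thus $x_n\to\xi$, $d(o,x_n)\to\infty$, and $S_{\xi,R}$ is unbounded with $\xi$ in its limit set.

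The main obstacle I anticipate is the step producing a point of the ray in both sets. Specifically, one must make sure the "last visit" to $Y_R$ exists and gives a full witness subray with $d\ge R$. The non-strict inequalities in the definitions of $B_{\xi,R}$ (namely $\ge R$) and $Y_R$ (namely $\le R$) make the boundary point $d=R$ belong to both, which is exactly what is needed. Finiteness of $s_n$ uses non-conicality of $\xi$ as in Lemma~\ref{eventuallyHoroball}.
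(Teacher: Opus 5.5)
Your argument is correct, but it reaches points of $S_{\xi,R}$ along different geodesics than the paper does.

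\textbf{The paper's route.} It uses minimality of the action on $\Lambda G$ to choose $g_n$ with $g_n\xi\ne\xi$ and $g_n\xi\to\xi$, and works on the bi-infinite lines $[g_n\xi,\xi]$. To know that such a line enters $Y_R$ at all, it must invoke Proposition~\ref{lineNearOrbit}: the line comes within $R_\xi<R$ of $Go$. It then takes the last exit from $Y_R$, just as you do.

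\textbf{Your route.} You use rays $[h_no,\xi]$ issuing from orbit points with $h_no\to\xi$, so the starting point lies in $Y_R$ for free. The last-exit time $s_n$ is finite by non-conicality of $\xi$ alone, so your appeal to $B_{\xi,R}$ at that step is superfluous. You should also simply delete the abandoned $a_n$ attempt. The final step, $x_n\to\xi$ via $d(o,\gamma_n)\to\infty$ and the visual-metric estimate, is the same in both proofs.

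\textbf{What each buys.} Your route is independent of Proposition~\ref{lineNearOrbit} and of minimality. Apart from closedness of $B_{\xi,R}$ (whose proof in Lemma~\ref{horoballClosed} does not actually use the lower bound on $R$ either), it shows that $S_{\xi,R}$ accumulates at $\xi$ for every $R>0$. The paper's route has a different advantage: its points lie on geodesics with both endpoints in $\Lambda G$, hence inside the weak hull $\mathrm{CH}(\Lambda G)$. That fits the horoball/convex-hull picture of geometric finiteness, though this extra information is not needed for the lemma or for its later use (showing $G_\xi$ is infinite).
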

\begin{proof}
    Consider the orbit $G\xi$ of $\xi$ in $\Lambda G$.
    Since the action of $G$ on $\Lambda G$ is topologically minimal, the orbit $G\xi$ is dense in $\Lambda G$.
    Hence there exists a sequence $\{g_n\}$ in $G$ such that $g_n\xi\ne \xi$ and $g_n\xi\to \xi$ in $\Lambda G$.
    For each $n$, choose a geodesic $\gamma_n=[g_n\xi, \xi]$.
    
    By Lemma~\ref{eventuallyHoroball}, each geodesic $\gamma_n$ is eventually contained in $B_{\xi, R}$.
    By Proposition~\ref{lineNearOrbit}, there exists a point $x_n\in \gamma_n$ such that $d(x_n, Go)\le R_\xi<R$.
    Since the function $d(\cdot, Go)$ is continuous along $\gamma_n$, and $\gamma_n$ eventually lies in $B_{\xi, R}$ (where the distance to $Go$ is at least $R$), there exists a point $y_n$ on the subpath $[x_n, \xi]_{\gamma_n}$ such that $d(y_n, Go)=R$ and $d([y_n, \xi]_{\gamma_n}, Go)\ge R$. Thus, $y_n\in B_{\xi, R}\cap Y_R = S_{\xi, R}$.

    Because $g_n\xi\to \xi$, the visual distance between $g_n\xi$ and $\xi$ tends to zero, which implies
    $$
    \lim\limits_{n\to\infty} d(o, \gamma_n)=\infty.
    $$
    Consequently,
    $$
    \lim\limits_{n\to\infty} d(o, [y_n, \xi]_{\gamma_n})=\infty.
    $$
    This forces $y_n$ to converge to $\xi$ in $\overline{X}$; in particular, $\xi$ is a limit point of $S_{\xi, R}$.
\end{proof}

\subsection{Completion of the proof of Theorem~\ref{geometricallyFiniteNearOrbit}}\label{subsec:proofTheorem1}

With the preceding geometric constructions established, we now establish the two directions of the equivalence separately.
\\
    \paragraph{\textit{Necessity direction}.} This direction is well-known and we include a proof for completeness. Assume the action of $G$ on $X$ is geometrically finite. By Lemma~\ref{geomFinite}, there exists a $G$-invariant family of pairwise disjoint horoballs $\{B_p: p\in \Lambda^{nc} G\}$ such that the action of $G$ on the complement $\mathrm{CH}(\Lambda G)\setminus \bigcup_{p\in \Lambda^{nc} G}B_p$ is cocompact.
    
    It is a known fact that no bi-infinite geodesics are contained in a horoball. Since horoballs $\{B_p: p\in \Lambda^{nc} G\}$ are   pairwise disjoint, any axis of any hyperbolic element $g \in G$ must intersect the complement $X\setminus \bigcup_{p\in \Lambda^{nc} G}B_p$.
    Suppose the weak hull $\mathrm{CH}(\Lambda G)$ is $D$-quasiconvex for some $D>0$.
    Then every axis $\gamma$ of $g$ lies in the $D$-neighborhood of $\mathrm{CH}(\Lambda G)$.
    Hence $\gamma\cap N_{D}(\mathrm{CH}(\Lambda G))\setminus\bigcup_{p\in \Lambda^{nc} G}B_p\ne \varnothing$.
    
    Since the action on the complement $\mathrm{CH}(\Lambda G)\setminus \bigcup_{p\in \Lambda^{nc} G}B_p$ is cocompact, there exists a constant $R>0$ such that the orbit $G \cdot B(o, R)$ covers this set. Therefore, $d(\gamma, G o) \le R+D$ for every hyperbolic element $g\in G$ and every axis $\gamma$ of $g$, proving necessity with $R_0 = R+D$.
\\
    \paragraph{\textit{Sufficiency direction}.} Conversely, assume  there exists a constant $R_0>0$ such that $d(\gamma, G o)\le R_0$ for every hyperbolic element $g$ in $G$ and some axis $\gamma$ of $g$ (i.e. \textbf{NEHE assumption}). We must show that the action is geometrically finite. By Lemma~\ref{geomFinite}, it is enough to prove that every non-conical limit point is a bounded parabolic point.

    Fix a non-conical limit point $\xi \in \Lambda^{nc} G$ and choose $R > R_\xi + 2\delta$, where $R_\xi$ is the constant from Proposition~\ref{lineNearOrbit}.
    Consider the quotient map $$\pi: Y_R \to Y_R/G$$ induced by the proper action.
    The image $\pi(S_{\xi, R})$ is a closed subset of the compact space $Y_R/G$, hence $\pi(S_{\xi, R})$ is compact.
    
    For any $g \in G$ with $g\xi \ne \xi$, Lemma~\ref{equivariant} gives $g S_{\xi, R} = S_{g\xi, R}$, and Corollary~\ref{disjoint} ensures that $S_{\xi, R} \cap S_{g\xi, R} = \varnothing$.
    Consequently, the $\pi$-preimage of $\pi(S_{\xi, R})$ is precisely the $G_\xi$-orbit of $S_{\xi, R}$, inducing a homeomorphism $$\pi(S_{\xi, R}) \cong S_{\xi, R} / G_\xi$$ where $G_\xi$ denotes the stabilizer of $\xi$ in $G$.
    The compactness of $\pi(S_{\xi, R})$ thus implies that the action of $G_\xi$ on $S_{\xi, R}$ is cocompact.

    By Lemma~\ref{boundaryInCocompact}, $S_{\xi, R}$ is unbounded. Since a cocompact action on a non-compact space requires an infinite group of isometries, the stabilizer $G_\xi$ must be infinite.
    Moreover, as $\xi$ is non-conical, its limit set under the action of $G_\xi$ is the singleton $\Lambda G_\xi = \{\xi\}$. Indeed, if $\#\Lambda G_{\xi}\ge 2$, then $G_\xi$ would contain a hyperbolic element, contradicting that $\xi$ is non-conical. Hence, $\xi$ is a parabolic point.

    It remains to prove $\xi$ is \textit{bounded} parabolic. To that end, we construct a compact fundamental domain for the action of $G_\xi$ on $\partial X \setminus \{\xi\}$.
    First, let $K \subset S_{\xi, R}$ be a compact fundamental domain for the cocompact action of $G_\xi$ on $S_{\xi, R}$. % (such a $K$ exists by the argument above).
    Define a boundary subset $D \subset \partial X \setminus \{\xi\}$ by
    $$
    D := \{\eta \in \partial X \setminus \{\xi\} : \text{there exists a geodesic line } [\eta, \xi] \text{ intersecting } K \}.
    $$

    \textit{Claim:} The closure $\overline{D}$ of $D$ in $\partial X \setminus \{\xi\}$ is a compact fundamental domain for the action of $G_\xi$ on $\partial X \setminus \{\xi\}$.

    \begin{proof}[Proof of the Claim]
        Since the action of $G_\xi$ on $S_{\xi, R}$ is cocompact, the limit set of $S_{\xi, R}$ is $\Lambda S_{\xi, R} = \{\xi\}$. This implies that any geodesic line $\gamma$ with $\gamma(\infty) = \xi$ must intersect $S_{\xi, R}$. For any $\eta \in \Lambda G \setminus \{\xi\}$, the line $[\eta, \xi]$ therefore meets some $G_\xi$-translate of $K \subset S_{\xi, R}$. Thus, $G_\xi D = \Lambda G \setminus \{\xi\}$.

        It remains to show $\overline{D} \subset \Lambda G \setminus \{\xi\}$. It suffices to verify that $\xi$ is not an accumulation point of $D$. Let $\{\eta_n\} \subset D$ be a sequence converging to some point in $\partial X$. For each $n$, choose a geodesic line $\gamma_n = [\eta_n, \xi]$ and a point $x_n \in \gamma_n \cap K$. Since $K$ is compact, the distances $d(o, x_n)$ are uniformly bounded. In a $\delta$-hyperbolic space, this yields a uniform lower bound on the Gromov product $\langle  \eta_n, \xi \rangle_o$, and hence a positive lower bound on the visual distance between $\eta_n$ and $\xi$ with respect to any visual metric on $\partial X$. Therefore, $\lim\limits_{n\to\infty} \eta_n \neq \xi$, confirming $\overline{D} \subset \partial X \setminus \{\xi\}$ and its compactness.
    \end{proof}

    The existence of such a compact fundamental domain $\overline{D}$ means precisely that $\xi$ is a bounded parabolic point. Since $\xi \in \Lambda^{nc} G$ was arbitrary, every non-conical limit point is bounded parabolic. Applying Lemma~\ref{geomFinite} (or the standard dynamical characterization of geometric finiteness), we conclude that the action of $G$ on $\Lambda G$ is geometrically finite. The Sufficiency direction direction follows and Theorem \ref{geometricallyFiniteNearOrbit} is proved.
%\end{proof}

\section{Uncountably Many Non-conical Points}
This section is devoted to proving Theorem~\ref{geometricallyFiniteNonconicalCountable} based on Theorem~\ref{geometricallyFiniteNearOrbit}. 

Before turning to the proof, we note that the strategy originating in \cite{Bishop96}
(and further developed in \cite{KL19}) has by now become standard. 
However, at the level of generality considered here, a few additional arguments are needed.
In particular, one must verify that the geodesic rays produced by this method are genuinely
\emph{escaping}, so that their endpoints are non-conical.

In the case of pinched negatively curved manifolds, this is proved in \cite[Lemma~4.1]{KL19}
via delicate arguments that use manifold topology. 
By contrast, our proof below is purely metric: it avoids any manifold-specific topological input.

Throughout this section, let $G$ be a discrete group that admits a non-elementary, geometrically infinite, proper action on a proper $\delta$-hyperbolic space $X$ for some $\delta\ge 0$, and fix a basepoint $o\in X$. 
By Lemma~\ref{localQuasiGeodesic}, there exist constants $L>16\delta$, $\lambda \ge 1$, and $C\ge 0$ such that every $L$-local $(3, 2D_0)$-quasi-geodesic in $X$ is a $(\lambda, C)$-quasi-geodesic.
By Lemma~\ref{MorseLemma}, there exists a constant $B>0$ such that $d_{\mathrm{Haus}}(p, [p_-, p_+])\le B$ for every $(\lambda, C)$-quasi-geodesic $p$. 

\subsection{Existence of escaping hyperbolic elements in one direction}
By Theorem~\ref{geometricallyFiniteNearOrbit}, there exists a sequence $\{g_n\}$ of hyperbolic elements with axes $\gamma_n$ satisfying 
$$
\lim\limits_{n\to\infty} d(\gamma_n, Go)=\infty.
$$

After replacing each element by an appropriate conjugate, we may assume that $d(\gamma_n, o)$ is minimal among all its conjugates, i.e., $d(\gamma_n, o)=d(\gamma_n, Go)$.
By passing to a subsequence, we may further assume that $\lim\limits_{n\to\infty}g_n^+=\xi$ for some $\xi\in \partial X$, and that for every $n\in \mathbb{N}$,
$$d(\gamma_{n+1}, Go)> \sum\limits_{i=1}^n d(\gamma_i, Go)+\sum\limits_{i=1}^{n}\tau(g_i)+D_0 + D_1 +L + \lambda(2D_0+200\delta + B) +C .$$
See Figure \ref{fig:axes_sequence} for the illurstration.
\begin{figure}[htbp]
\centering
\begin{tikzpicture}[
    scale=1.5,
    point/.style={circle, fill=black, inner sep=1.2pt},
    boundarypoint/.style={circle, fill=red!70, inner sep=1.5pt},
    axis/.style={thick, blue!70},
    seqaxis/.style={thick, gray!50},
    segment/.style={thick, red!80, dashed},
    font=\small
]

% 单位圆（Poincaré圆盘边界）
\draw[thick] (0,0) circle (3);
\node at (0, -3.3) {$\partial X$};

% 边界点ξ（右侧）
\coordinate (xi) at (3,0);
\node[boundarypoint, label=right:$\xi$] at (xi) {};

% 参数：控制点偏移量
\def\d{1.5}

% 画一列趋于ξ的测地线（灰色）
% 角度从大到小，表示逐渐趋近
\foreach \angle in {50,40,30,20,10} {
    \pgfmathsetmacro{\cosangle}{3*cos(\angle)}
    \pgfmathsetmacro{\sinangle}{3*sin(\angle)}
    \coordinate (start) at ({\cosangle}, {\sinangle});
    \coordinate (end) at ({\cosangle}, {-\sinangle});
    \coordinate (ctrl) at ({\cosangle-\d}, 0);
    \draw[seqaxis] (start) .. controls (ctrl) .. (end);
}

% 突出两条特定的测地线：Axis(g_n) 和 Axis(g_{n+1})
% Axis(g_n) 对应角度40°
\pgfmathsetmacro{\angleN}{40}
\pgfmathsetmacro{\cosN}{3*cos(\angleN)}
\pgfmathsetmacro{\sinN}{3*sin(\angleN)}
\coordinate (startN) at ({\cosN}, {\sinN});
\coordinate (endN) at ({\cosN}, {-\sinN});
\coordinate (ctrlN) at ({\cosN-\d}, 0);
\draw[axis] (startN) .. controls (ctrlN) .. (endN);

% Axis(g_{n+1}) 对应角度30°
\pgfmathsetmacro{\angleNp}{30}
\pgfmathsetmacro{\cosNp}{3*cos(\angleNp)}
\pgfmathsetmacro{\sinNp}{3*sin(\angleNp)}
\coordinate (startNp) at ({\cosNp}, {\sinNp});
\coordinate (endNp) at ({\cosNp}, {-\sinNp});
\coordinate (ctrlNp) at ({\cosNp-\d}, 0);
\draw[axis] (startNp) .. controls (ctrlNp) .. (endNp);

% 在 Axis(g_n) 上取点 x_n，要求与ξ水平（y=0）
\path (startN) .. controls (ctrlN) .. (endN) coordinate[pos=0.5] (xn);
\node[point, label=left:$x_n$] at (xn) {};

% 在 Axis(g_{n+1}) 上取点 y_{n+1}，要求与ξ水平（y=0）
\path (startNp) .. controls (ctrlNp) .. (endNp) coordinate[pos=0.5] (yn);
\node[point, label=above right:$y_{n+1}$] at (yn) {};

% 连接 x_n 和 y_{n+1} 的线段（没有标签）
\draw[segment] (xn) -- (yn);

% 将 Axis(g_n) 标记往右下挪一点
\node[blue!70, above right] at (2.3, 1.9) {$\gamma_n$};

% 将 Axis(g_{n+1}) 标记往右上挪一点，但保持在 Axis(g_n) 的右下方
\node[blue!70, above right] at (2.6, 1.5) {$\gamma_{n+1}$};

\end{tikzpicture}
\caption{Choosing the escasping sequence of axes.}
\label{fig:axes_sequence}
\end{figure}

\begin{lemma}\label{contractingSystem}
    For any $g, h\in G$ and $n\ne m\in \mathbb{N}$, we have $d(g\gamma_n, h\gamma_m)>L+\lambda(2D_0+200\delta + B) + C$.
\end{lemma}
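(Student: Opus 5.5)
Set $M := L+\lambda(2D_0+200\delta+B)+C$. The plan is a direct distance estimate that uses the rapid growth of $d(\gamma_n, Go)$. We may assume $m>n$, since otherwise we swap the roles. Translating by $h^{-1}$ preserves distances, so
$$d(g\gamma_n, h\gamma_m) = d(k\gamma_n, \gamma_m) \quad\text{with } k := h^{-1}g.$$
It therefore suffices to bound $d(k\gamma_n,\gamma_m)$ from below for an arbitrary $k\in G$.

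First, locate $k\gamma_n$ relative to the orbit. By the normalization, $d(\gamma_n, o)=d(\gamma_n, Go)$, so there is a point $x\in\gamma_n$ with $d(x,o)=d(\gamma_n,Go)$. Then $kx\in k\gamma_n$ and $d(kx, ko)=d(\gamma_n, Go)$.

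Next, we must control every point of $k\gamma_n$, not just $kx$. This is where the translation length enters. The segment $[x, g_nx]_{\gamma_n}$ has length at most $\tau(g_n)+D_1$ by Lemma~\ref{translationDistance}. Its $\langle g_n\rangle$-translates cover an axis within Hausdorff distance $D_0$ of $\gamma_n$, since $g_n^j\gamma_n$ are axes of $g_n$. Hence every point $y\in\gamma_n$ lies within $\tau(g_n)+D_1+D_0$ of some $g_n^j x$. In turn, $g_n^jx$ is within $d(\gamma_n,Go)$ of the orbit point $g_n^j o$. Consequently
$$d(y, Go)\le d(\gamma_n,Go)+\tau(g_n)+D_0+D_1 \quad\text{for all } y\in\gamma_n,$$
and the same bound holds on $k\gamma_n$ because $Go$ is $G$-invariant.

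Now take any $y'\in k\gamma_n$ and $z\in\gamma_m$. By the triangle inequality,
$$d(y',z)\ge d(z,Go)-d(y',Go)\ge d(\gamma_m,Go)-d(\gamma_n,Go)-\tau(g_n)-D_0-D_1.$$
The growth condition imposed on the subsequence, applied at index $m-1\ge n$, gives
$$d(\gamma_m,Go)>\sum_{i<m}\bigl(d(\gamma_i,Go)+\tau(g_i)\bigr)+D_0+D_1+M,$$
and the sum on the right dominates $d(\gamma_n,Go)+\tau(g_n)$. Substituting yields $d(y',z)>M$. Taking the infimum requires a little care: strictness holds because the growth inequality is strict and the bound is uniform in $y'$ and $z$. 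If needed, this can also be justified by properness, which makes the infimum over these closed sets essentially attained.

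The main obstacle is the uniform bound on $d(y,Go)$ along the whole axis $\gamma_n$, which needs the covering of $\gamma_n$ by $\langle g_n\rangle$-translates of a fundamental segment, up to the $D_0$ discrepancy between axes. I would check that covering step carefully, possibly reusing the projection argument from the proof of Lemma~\ref{translationDistance} (the points $x_n\in\pi_\gamma(g^nz)$) to make it rigorous.
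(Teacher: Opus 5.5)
Your proof is correct and follows the paper's argument. Both bound $d(y,Go)\le d(\gamma_n,Go)+\tau(g_n)+D_0+D_1$ uniformly along every translate of the axis with the smaller index, then combine $d(y',z)\ge d(z,Go)-d(y',Go)$ with the growth condition on $d(\gamma_m,Go)$. Your covering step, made rigorous via projections $x_j\in\pi_{\gamma_n}(g_n^jx)$ spaced at most $2D_0+\tau(g_n)+D_1$ apart, supplies a detail the paper leaves implicit. It does give your stated bound, since $D_1\ge 4D_0$ implies $2D_0+(\tau(g_n)+D_1)/2\le \tau(g_n)+D_1+D_0$.
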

\begin{proof}
    Without loss of generality, assume that $n>m$.
    Observe that for any $m\in \mathbb{N}$ and $k\in \mathbb{Z}$, we have $d(g_m^k o, \gamma_m) \le d(\gamma_m, Go) + D_0$. 
    Consequently, for any point $x\in \gamma_m$, $d(x, Go)\le d(\gamma_m, Go)+\tau(g_m) + D_0+D_1$.
    
    The same estimate holds for any translate $h\gamma_m$. 
    Hence, applying the triangle inequality, for distinct indices $n> m\in \mathbb{N}$ and any $g, h\in G$, we obtain
    \begin{align}
    &d(g\gamma_n, h\gamma_m)\notag\\
    \ge &d(\gamma_n, Go)-\inf_{x\in h\gamma_m}d(x, Go)\notag\\
    \ge &d(\gamma_n, Go) - d(\gamma_m, Go) - \tau(g_m) - D_0 - D_1 \notag\\
    > &L+\lambda(2D_0+200\delta + B) + C.\qedhere \notag
    \end{align}
\end{proof}

Next, we fix reference points on these axes. For each $n\in \mathbb{N}$, let $x_n, y_{n+1}$ be points on $\gamma_n$ and $\gamma_{n+1}$, respectively, such that $d(x_n, y_{n+1})=d(\gamma_{n}, \gamma_{n+1})$.
For $n=0$, choose an arbitrary point $y_0\in \gamma_{0}$. 

The following result corresponds to \cite[Lemma 4.1]{KL19}. 
\begin{lemma}\label{qnFarAway}
    $$
    \lim\limits_{n\to\infty} d([x_n, y_{n+1}], Go)=\infty.
    $$
\end{lemma}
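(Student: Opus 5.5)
The plan is to argue by contradiction. Suppose there are $R>0$, infinitely many $n$, points $p_n\in[x_n,y_{n+1}]$ and elements $k_n\in G$ with $d(p_n,k_no)\le R$. Everything in the construction is $G$-equivariant except the normalization $d(\gamma_n,o)=d(\gamma_n,Go)$. So I first treat the untranslated case $k_n=1$ by a Gromov product estimate, and then reduce the general case to it.

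\emph{Untranslated case.} Since $d(\gamma_n,o)\to\infty$ and $g_n^+\to\xi$, the visual estimate \eqref{eqn: visual metric} applied to the pair $g_n^\pm$ shows that $g_n^-\to\xi$ as well. After a further subsequence (compatible with the growth condition already imposed), I would assume
$$\langle g_n^{\pm},\xi\rangle_o\ \ge\ d(\gamma_n,o)-10\delta .$$
The four-point condition then gives $\langle x,\xi\rangle_o\ge d(\gamma_n,o)-O(\delta)$ for every $x\in\gamma_n$, and likewise $\langle y,\xi\rangle_o\ge d(\gamma_{n+1},o)-O(\delta)$ for every $y\in\gamma_{n+1}$. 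Applying the four-point condition once more,
$$\langle x_n,y_{n+1}\rangle_o\ \ge\ \min\{d(\gamma_n,o),d(\gamma_{n+1},o)\}-O(\delta)=d(\gamma_n,o)-O(\delta).$$
By the basic estimate $d([x,y],z)\ge\langle x,y\rangle_z$, this gives $d(o,[x_n,y_{n+1}])\to\infty$. The same bound holds for every orbit point $ko$ whose projection to $[o,\xi]$ lies before the shadow of $\gamma_n$.

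\emph{General case.} Suppose $p_n$ is within $R$ of an orbit point $k_no$, which by the previous step must lie deep in the shadow of $\xi$. Here I would exploit that $[x_n,y_{n+1}]$ is a shortest bridge between $\gamma_n$ and $\gamma_{n+1}$. Consider the concatenation
$$\gamma_n\text{-ray}\ \cdot\ [x_n,y_{n+1}]\ \cdot\ \gamma_{n+1}\text{-ray}.$$
Because $x_n,y_{n+1}$ are closest points, this is an $L$-local $(3,2D_0)$-quasi-geodesic once $d(x_n,y_{n+1})>L$, which Lemma~\ref{contractingSystem} provides. By Lemma~\ref{localQuasiGeodesic} and Lemma~\ref{MorseLemma} it then lies within $B$ of a bi-infinite geodesic. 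Translating by $k_n^{-1}$ yields lines $k_n^{-1}\gamma_n$ and $k_n^{-1}\gamma_{n+1}$ joined by a bridge passing within $R$ of $o$, with
$$d(k_n^{-1}\gamma_n,o)\ge d(\gamma_n,Go)\qquad\text{and}\qquad d(k_n^{-1}\gamma_{n+1},o)\ge d(\gamma_{n+1},Go).$$
The aim is to get a contradiction from the fact that $d(p_n,x_n)\ge d(\gamma_n,Go)-R$ is large. I would look for an orbit point (for instance $g_n^jk_no$, or a point of $Go$ near the projection of $k_no$ to $\gamma_n$) that is too close to $\gamma_n$ or $\gamma_{n+1}$. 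Such a point would violate either the lower bound on $d(\gamma_{n+1},Go)$, which by the growth condition dominates all earlier quantities, or Lemma~\ref{contractingSystem}.

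The main obstacle is exactly this general case. In a tree an orbit point can sit in the middle of the bridge between two far-away axes, so the contradiction must come from the specific growth condition and the normalizations, not from hyperbolic geometry alone. I would try first to show that $k_no$ near the bridge forces $d(k_no,\gamma_{n+1})\le d(k_no,\gamma_n)+O(\tau(g_n)+D_0+D_1)$, using that every point of $\gamma_n$ is within $d(\gamma_n,Go)+\tau(g_n)+D_0+D_1$ of $Go$ (as in the proof of Lemma~\ref{contractingSystem}). Combined with the superlinear growth of $d(\gamma_{n+1},Go)$, this should contradict the choice of subsequence. If that fails, I would instead refine the subsequence so that the shadows of consecutive axes are nested, and argue with the translated shadows.
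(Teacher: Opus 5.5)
Your proposal has a genuine gap. The ``general case'' is the whole content of the lemma: your untranslated case only controls $d(o,[x_n,y_{n+1}])$, not $d(Go,[x_n,y_{n+1}])$. For that case you give a list of things you would try, and none of them is carried out. The mechanism you are looking for, namely an orbit point too close to $\gamma_n$ or $\gamma_{n+1}$ contradicting the growth condition, is not what makes the lemma true. Your tree heuristic is also misleading: under the normalization $d(\gamma_n,o)=d(\gamma_n,Go)$, no orbit point can sit in the middle of the bridge.

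Separately, the bound $\langle g_n^{\pm},\xi\rangle_o\ge d(\gamma_n,o)-10\delta$ cannot be obtained by passing to a subsequence. It says that $\xi$ lies in the shadow of $\gamma_n$, which is a condition on the configuration, not on the tail of the sequence. For example, in $\mathbb{H}^2$ it fails for every $n$ if both endpoints of each $\gamma_n$ lie on one side of $\xi$, at visual distance from $\xi$ much larger than their mutual visual distance. You do not need it anyway: for $x\in\gamma_n$ one has $\langle x,\xi\rangle_o\ge\min\{\langle x,g_n^+\rangle_o,\langle g_n^+,\xi\rangle_o\}-\delta$, and both terms diverge uniformly in $x$.

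The missing idea is that the normalization makes $o$ a nearest orbit point to $p_n\in\pi_{\gamma_n}(o)$, that is, $d(p_n,o)=d(p_n,Go)$. This propagates down $[o,p_n]$: for $y\in[o,p_n]$ and $g\in G$,
\[
d(y,o)=d(p_n,o)-d(p_n,y)\le d(p_n,go)-d(p_n,y)\le d(y,go).
\]
The same holds for $p_{n+1}$. Since $\triangle(o,p_n,p_{n+1})$ is $\delta$-thin, every $x\in[p_n,p_{n+1}]$ then satisfies $d(x,Go)\ge d(x,o)-2\delta$. Moreover $d(o,[p_n,p_{n+1}])\to\infty$ because $p_n,p_{n+1}\to\xi$; this is where your untranslated estimate belongs. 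Two applications of Lemma~\ref{fourPoints}, with intermediate points $p_n$ and $p_{n+1}$, give, simultaneously for all $g\in G$,
\[
d(go,[x_n,y_{n+1}])\ge\min\bigl\{d(go,[x_n,p_n]),\,d(go,[p_n,p_{n+1}]),\,d(go,[p_{n+1},y_{n+1}])\bigr\}-10\delta .
\]
The two outer pieces lie on $\gamma_n$ and $\gamma_{n+1}$, so they are far from $Go$. This is the paper's argument. It needs no contradiction, no translation by $k_n^{-1}$ and no local quasi-geodesics. From the growth condition it uses only $d(\gamma_n,Go)\to\infty$.
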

\begin{proof}
    For each $n\in \mathbb{N}$, let $p_n\in \pi_{\gamma_n}(o)$. Applying Lemma~\ref{fourPoints} to any $g\in G$, we obtain
    \begin{align}
        d(go, [x_n, y_{n+1}]) &\ge \min \left\{d(go, [x_n, p_n]), d(go, [p_n, y_{n+1}])\right\}-5\delta \notag \\
        &\ge \min \left\{d(go, [x_n, p_n]), d(go, [p_n, p_{n+1}]), d(go, [p_{n+1}, y_{n+1}])\right\} - 10\delta. \notag
    \end{align}
    Hence,
    $$
    d(Go, [x_n, y_{n+1}])\ge \min \left\{d(Go, [x_n, p_n]), d(Go, [p_n, p_{n+1}]), d(Go, [p_{n+1}, y_{n+1}])\right\} - 10\delta.
    $$
    Since $\lim\limits_{n\to\infty} d(\gamma_n, Go)=\infty$, and since $[x_n, p_n]\subset \gamma_n$ and $[p_{n+1}, y_{n+1}]\subset \gamma_{n+1}$, the terms $d(Go, [x_n, p_n])$ and $d(Go, [p_{n+1}, y_{n+1}])$ tend to infinity. See Figure \ref{fig:two_axes}.

% 需要在导言区添加以下包：
% \usepackage{tikz}
% \usetikzlibrary{calc}

\begin{figure}[htbp]
\centering
\begin{tikzpicture}[
    scale=1.2,
    point/.style={circle, fill=black, inner sep=1.2pt},
    axis/.style={thick, blue!70},
    segment/.style={thick, red!80, dashed},
    conn/.style={thin, black, densely dashed},
    font=\small
]

% 定义两条竖线的起点和终点（左边线比右边线高）
% 左边线 Axis(g_n) 的起点和终点
\coordinate (LnStart) at (-2, 2);   % 起点，稍高
\coordinate (LnEnd) at (-2, -1.5);  % 终点
% 右边线 Axis(g_{n+1}) 的起点和终点
\coordinate (RnStart) at (1, 1.5);  % 起点，稍低
\coordinate (RnEnd) at (1, -2);     % 终点

% 绘制两条略微向内弯的竖线（使用二次贝塞尔曲线，控制点使线向内弯曲）
% 左边线：控制点向右偏移，使其向右弯曲
\draw[axis] (LnStart) .. controls (-1.5, 0.25) .. (LnEnd);
\node[blue!70, above right] at ($(LnStart)+(0.2,0)$) {$\gamma_n$};

% 右边线：控制点向左偏移，使其向左弯曲
\draw[axis] (RnStart) .. controls (0.5, -0.25) .. (RnEnd);
\node[blue!70, above right] at ($(RnStart)+(0.2,0)$) {$\gamma_{n+1}$};

% 在左边线上标记点 p_n（中点）
\path (LnStart) .. controls (-1.5, 0.25) .. (LnEnd) coordinate[pos=0.5] (pn);
\node[point, label=left:$p_n$] at (pn) {};

% 在右边线上标记点 p_{n+1}（中点）
\path (RnStart) .. controls (0.5, -0.25) .. (RnEnd) coordinate[pos=0.5] (pnp1);
\node[point, label=right:$p_{n+1}$] at (pnp1) {};

% 在左边线上标记点 x_n（在 p_n 以下，例如 pos=0.7）
\path (LnStart) .. controls (-1.5, 0.25) .. (LnEnd) coordinate[pos=0.7] (xn);
\node[point, label=left:$x_n$] at (xn) {};

% 在右边线上标记点 y_{n+1}（在 p_{n+1} 以上，例如 pos=0.3）
\path (RnStart) .. controls (0.5, -0.25) .. (RnEnd) coordinate[pos=0.3] (yn);
\node[point, label=right:$y_{n+1}$] at (yn) {};

% 连接 x_n 和 y_{n+1} 的线段
\draw[segment] (xn) -- (yn);

% 添加点 o，放在左边线左侧，大约在竖直中间偏下位置
\coordinate (o) at (-3.5, -0.5);
\node[point, label=left:$o$] at (o) {};

% 连接 o 到 p_n 和 p_{n+1}
\draw[conn] (o) -- (pn);
\draw[conn] (o) -- (pnp1);

\end{tikzpicture}
\caption{}
\label{fig:two_axes}
\end{figure}

    It remains to analyze $d(Go, [p_n, p_{n+1}])$.
    First, note that 
    $$
    \lim\limits_{n\to\infty} g_n^+=\xi \quad \text{and} \quad \lim\limits_{n\to\infty} d(o, [p_n, g_n^+])=\infty,
    $$
    so the visual distance between $p_n$ and $g_n^+$ tends to $0$; that is,
    $$
    \lim\limits_{n\to\infty} p_n=\lim\limits_{n\to\infty} g_n^+=\xi.
    $$
    Consequently,
    $$
    \lim\limits_{n\to\infty} d(o, [p_n, p_{n+1}])=\infty.
    $$
    
    By construction, we have $d(p_n, o)=d(p_n, Go)$ and $d(p_{n+1}, o)=d(p_{n+1}, Go)$. We claim that for any $x\in [p_n, p_{n+1}]$,
    $$d(x, Go)\ge d(x, o)-2\delta.$$

    To prove this claim, fix $x\in [p_n, p_{n+1}]$ and choose $g\in G$ such that $d(x, go)=d(x, Go)$. Since the triangle $\triangle(xp_np_{n+1})$ is $\delta$-thin, there exists $y\in [o, p_n]\cup [o, p_{n+1}]$ with $d(x, y)\le \delta$. Without loss of generality, assume $y\in [o, p_n]$. Then,
    \begin{align}
        d(p_n, y)+d(y, o) &= d(p_n, o) \notag \\
        &\le d(p_n, go) \notag \\
        &\le d(p_n, x)+d(x, go) \notag \\
        &\le d(p_n, y)+d(x, y) + d(x, go) \notag \\
        &\le d(p_n, y)+\delta + d(x, go). \notag
    \end{align}
    Hence,
    \begin{align}
        d(x, go) &\ge d(y, o)-\delta \ge d(x, o)-d(x, y)-\delta \ge d(x, o)-2\delta, \notag
    \end{align}
    which proves the claim.

    The claim implies that 
    $$
    \lim\limits_{n\to\infty}d(Go, [p_n, p_{n+1}])=\lim\limits_{n\to\infty} d(o, [p_n, p_{n+1}])=\infty,
    $$ 
    completing the proof.
\end{proof}

\subsection{Construction of escaping geodesic rays}
By construction, we can choose $N_n\in \mathbb{N}$ such that $d(y_n, g_n^kx_n)\ge L$ for all $k\ge N_n$. Define
$$\mathcal{P}:=\{(k_n)\in \mathbb{N}^\mathbb{N}: k_n\ge N_n \text{ for every } n\in \mathbb{N}\}.$$

We define a map $\Phi: \mathcal{P}\to \partial X$ as follows. Given a sequence $(k_n)\in \mathcal{P}$, set
\begin{align}
p_n &= g_0^{k_0}\cdots g_{n-1}^{k_{n-1}}[y_n, g_n^{k_n}x_n], \notag \\
q_n &= g_0^{k_0}\cdots g_{n}^{k_n}[x_n, y_{n+1}], \notag \\
\gamma &=p_0\cup q_0\cup p_1\cup \cdots\cup p_n\cup q_n\cup \cdots. \notag
\end{align}

\begin{lemma}
    The infinite path $\gamma$ is an $L$-local $(3, 2D_0)$-quasi-geodesic, and therefore a $(\lambda, C)$-quasi-geodesic.
\end{lemma}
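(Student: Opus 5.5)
The plan is to check the local quasi-geodesic condition directly from how $\gamma$ is assembled, and then invoke Lemma~\ref{localQuasiGeodesic} together with the choice of $L,\lambda,C$ fixed at the start of this section.

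\textbf{Step 1: $\gamma$ is a concatenation of geodesic pieces of length at least $L$.} Write $h_n=g_0^{k_0}\cdots g_{n-1}^{k_{n-1}}$. The endpoint of $p_n$ is $h_n g_n^{k_n}x_n$, which is the initial point of $q_n$. The endpoint of $q_n$ is $h_{n+1}y_{n+1}$, which is the initial point of $p_{n+1}$. So $\gamma$ is a connected path made of geodesic segments. Each $p_n$ has length $d(y_n,g_n^{k_n}x_n)\ge L$ because $k_n\ge N_n$. Each $q_n$ has length $d(\gamma_n,\gamma_{n+1})$, which exceeds $L$ by Lemma~\ref{contractingSystem}. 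Consequently, a subpath $\sigma$ of length $L$ either lies inside a single piece, and is then geodesic, or meets exactly two consecutive pieces. In the second case $\sigma=a\cup b$, with $a$ a terminal segment of one piece and $b$ an initial segment of the next. If a middle piece has length exactly $L$, the subpath equals that piece and is geodesic, so this boundary case causes no trouble.

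\textbf{Step 2: the junction estimate.} Translate by $h_n^{-1}$ or $h_{n+1}^{-1}$ so that the junction sits at $x_n$ or at $y_{n+1}$. Consider first the junction between $p_n$ and $q_n$, after translating so that it is at $g_n^{k_n}x_n$. Translating further by $g_n^{-k_n}$, we may take $b\subset[x_n,y_{n+1}]$ starting at $x_n$. Then $a$ lies on the geodesic $g_n^{-k_n}[y_n,g_n^{k_n}x_n]$ and ends at $x_n$. This geodesic joins a point of the axis $g_n^{-k_n}\gamma_n$ to $x_n\in\gamma_n$, and these two axes are $D_0$-Hausdorff close, so the whole of $a$ stays within roughly $D_0+2\delta$ of $\gamma_n$ by thinness.

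Since $[x_n,y_{n+1}]$ realizes $d(\gamma_n,\gamma_{n+1})$, every point $z\in b$ has $x_n\in\pi_{\gamma_n}(z)$. Indeed, a shorter path from $z$ to $\gamma_n$ would shorten the connection between the two axes. Hence $d(z,\gamma_n)=d(z,x_n)=|b|$ when $z$ is the far endpoint of $b$. For the far endpoint $w$ of $a$ we then get two lower bounds:
\[
d(z,w)\ge |b|-(\text{distance from } w \text{ to } \gamma_n)
\quad\text{and}\quad
d(z,w)\ge |a|-|b|.
\]
Splitting into the cases $|a|\ge 2|b|$ and $|a|<2|b|$ gives $|a|+|b|\le 3d(z,w)+\text{const}$. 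The symmetric junction between $q_n$ and $p_{n+1}$ is handled the same way, now using $y_{n+1}\in\pi_{\gamma_{n+1}}(x_n)$.

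\textbf{Step 3: conclusion, and the main obstacle.} Since every subsegment of $\sigma$ is either geodesic or again of the form $a'\cup b'$, Step 2 shows that $\sigma$ is a $(3,2D_0)$-quasi-geodesic. Lemma~\ref{localQuasiGeodesic}, with the constants fixed at the start of the section, then shows that $\gamma$ is a $(\lambda,C)$-quasi-geodesic.

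The main obstacle is pinning the additive constant to exactly $2D_0$. The crude bound on the distance from $w$ to $\gamma_n$ gives a constant of order $3(D_0+2\delta)$. To sharpen it, I would first try two things. One is to use that $w$ lies on $a$, which is a subsegment of a geodesic whose endpoint $x_n$ is on $\gamma_n$ and whose other endpoint is within $D_0$ of $\gamma_n$. The other is to measure $|b|$ against $d(w,\gamma_n)$ more carefully, for example via Lemma~\ref{geodesicContracting}. If the sharp constant cannot be reached this way, I would instead rechoose $L$ through Lemma~\ref{localQuasiGeodesic} for a slightly larger additive constant. This loses nothing, since only some uniform $(\lambda,C)$ is used later.
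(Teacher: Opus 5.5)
Your argument is essentially the paper's. You use the same reduction: every piece has length at least $L$, so after translating by $h_n^{-1}$ and $g_n^{-k_n}$ only a single junction at $x_n$ needs checking. You also use the same key fact, that $x_n$ is a nearest point of $\gamma_n$ to every point of $[x_n,y_{n+1}]$. You additionally treat the junction $q_n\cup p_{n+1}$, which the paper leaves implicit.

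The constant obstacle you flag is caused by your case split, not by the estimate of $d(w,\gamma_n)$. The paper simply adds your two inequalities:
\[
|a|+|b|\le d(w,z)+2|b|\le d(w,z)+2\bigl(d(z,w)+d(w,\gamma_n)\bigr)=3d(w,z)+2d(w,\gamma_n).
\]
This gives the additive constant $2D_0$ once $d(w,\gamma_n)\le D_0$. The paper takes that bound from its assertion that $p_n$ is within $D_0$ of a subpath of $\gamma_n$. The extra $\delta$-term in your bound ($D_0+2\delta$, and in fact $D_0+\delta$ suffices) can be absorbed by enlarging $D_0$, which the paper only specifies as some constant depending on $\delta$. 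So there is no need to rechoose $L$.
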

\begin{proof}
    Note that the Hausdorff distance between $p_n$ and a subpath of $\gamma_n$ is bounded by $D_0$, and the lengths of $p_n$ and $q_n$ are at least $L$.
    It suffices to prove the inequality $d(u, g_0^{k_0}\cdots g_{n}^{k_n}x_n)+d(g_0^{k_0}\cdots g_{n}^{k_n}x_n, v)\le 3d(u, v)+2D_0$ for any $u\in p_n$ and $v\in q_n$.

    By translation, it suffices to show that $d(u^\prime, x_n)+d(x_n, v^\prime)\le 3d(u^\prime, v^\prime)+2D_0$ for $u^\prime\in [g_n^{-k_n}y_n, x_n]$ and $v^\prime\in [x_n, y_{n+1}]$.
    Now
    \begin{align}
        &d(u^\prime, x_n)+d(x_n, v^\prime)\notag\\
        \le & d(u^\prime, v^\prime)+2d(v^\prime, x_n)\notag\\
        \le & d(u^\prime, v^\prime)+2\bigl(d(v^\prime, u^\prime)+d(u^\prime, \gamma_n)\bigr)\notag\\
        \le & 3d(u^\prime, v^\prime)+2D_0,\notag
    \end{align}
    which completes the proof.
\end{proof}

Intuitively, the path $\gamma$ is constructed by concatenating alternating segments: the segments $p_n$ travel along (translates of) the axis $\gamma_n$ for a prescribed number of steps $k_n$, while the segments $q_n$ are geodesic bridges connecting these axes. The condition in Lemma~\ref{contractingSystem} ensures these bridges are long enough, and the parameters are tuned so that the resulting path is a uniform quasi-geodesic. The endpoint $\gamma_+$ is forced to be non-conical because both types of segments, $p_n$ and $q_n$, eventually travel arbitrarily far from the orbit $Go$.

We define $\Phi((k_n))$ as the point in $\partial X$ represented by $\gamma$.

\begin{lemma}\label{imageNonconical}
    The image of $\Phi$ is contained in $\Lambda^{nc}G$.
\end{lemma}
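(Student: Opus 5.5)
We need to show that the endpoint $\gamma_+=\Phi((k_n))$ is a limit point and is not conical. First, $\gamma_+\in\Lambda G$. The path $\gamma$ is a $(\lambda,C)$-quasi-geodesic, so by Lemma~\ref{MorseLemma} a geodesic ray $[y_0,\gamma_+]$ stays within Hausdorff distance $B$ of $\gamma$. The vertices $g_0^{k_0}\cdots g_n^{k_n}x_n$ on $\gamma$ lie within bounded distance of the orbit points $g_0^{k_0}\cdots g_n^{k_n}p_n$, and these tend to $\gamma_+$. Hence $\gamma_+$ is an accumulation point of translates of a bounded set, and so $\gamma_+\in\Lambda G$. (Alternatively, $\gamma_+$ is a limit of translates of the axis endpoints $g_n^{\pm}\in\Lambda G$, and $\Lambda G$ is closed.)

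For non-conicality we use the geometric criterion recalled in the preliminaries. It suffices to show that $d(\gamma(t),Go)\to\infty$ along $\gamma$. Then, since the ray $[y_0,\gamma_+]$ is eventually $B$-close to the tail of $\gamma$, the ray also satisfies $d(\cdot,Go)\to\infty$, so no constant $R$ and sequence $t_n\to\infty$ with $d(\gamma(t_n),Go)\le R$ can exist. Because $Go$ is $G$-invariant, the distance from a translated segment to $Go$ equals the distance from the untranslated segment. So $d(q_n,Go)=d([x_n,y_{n+1}],Go)$, which tends to infinity by Lemma~\ref{qnFarAway}. Likewise, $p_n$ is a translate of $[y_n,g_n^{k_n}x_n]\subset\gamma_n$, so $d(p_n,Go)\ge d(\gamma_n,Go)$, which tends to infinity by the choice of the escaping sequence. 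Since $\gamma$ is the union of the $p_n$ and $q_n$, every point of $\gamma$ beyond $p_n$ is at distance at least $\min_{m\ge n}\min\{d(p_m,Go),d(q_m,Go)\}$ from $Go$, and this bound tends to infinity.

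The main obstacle is making the transfer from $\gamma$ to a geodesic ray rigorous. A geodesic ray $\beta$ from $y_0$ to $\gamma_+$ is the limit of geodesics $[y_0,\gamma(t)]$, and each of these lies within $B$ of $\gamma$ by Lemma~\ref{MorseLemma}. Given $t$, the point $\beta(t)$ is therefore $B$-close to some $\gamma(s)$, and $s\to\infty$ as $t\to\infty$ because $\gamma$ is a quasi-geodesic (bounded $s$ would force $\beta(t)$ to stay bounded). Hence
\[
d(\beta(t),Go)\ge d(\gamma(s),Go)-B\to\infty.
\]
Finally, the criterion for conical points was stated for some (hence any) ray ending at $\gamma_+$, so $\gamma_+\in\Lambda^{nc}G$.
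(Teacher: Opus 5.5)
Your non-conicality argument is correct and is the paper's argument. You use $d(q_n,Go)\to\infty$ from Lemma~\ref{qnFarAway} and $d(p_n,Go)\to\infty$ because $p_n$ stays near a translate of $\gamma_n$. You also spell out the Morse-lemma transfer from $\gamma$ to a geodesic ray, which the paper leaves to ``the definition of conical points''. One minor point: $[y_n,g_n^{k_n}x_n]$ need not lie on $\gamma_n$, since $g_n^{k_n}x_n$ lies on the axis $g_n^{k_n}\gamma_n$, which is only $D_0$-close to $\gamma_n$. This costs an additive constant and is harmless.

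The gap is in showing $\gamma_+\in\Lambda G$. Your main argument asserts that the vertices $g_0^{k_0}\cdots g_n^{k_n}x_n$ lie within bounded distance of orbit points tending to $\gamma_+$. This is false: $d(g_0^{k_0}\cdots g_n^{k_n}x_n,Go)=d(x_n,Go)\ge d(\gamma_n,Go)\to\infty$. Also, the $p_n$ are not orbit points; they are segments of $\gamma$, or, in Lemma~\ref{qnFarAway}, projections of $o$ to $\gamma_n$. Worse, if points of $\gamma$ at parameters tending to infinity were uniformly close to $Go$, the Morse-lemma transfer in your last paragraph would show that $\gamma_+$ is \emph{conical}. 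So this argument contradicts the conclusion you want. The curve $\gamma$ keeps returning to the axes, i.e.\ to $\mathrm{CH}(\Lambda G)$, not to the orbit.

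Your parenthetical alternative is exactly the paper's route, but the convergence needs proof. Set $h_{n-1}=g_0^{k_0}\cdots g_{n-1}^{k_{n-1}}$. The paper uses the path $p_0\cup q_0\cup\cdots\cup q_{n-1}\cup h_{n-1}[y_n,g_n^+]$. For the same reason as $\gamma$, it is an $L$-local $(3,2D_0)$-quasi-geodesic, hence a $(\lambda,C)$-quasi-geodesic. It agrees with $\gamma$ up to $h_{n-1}y_n$ and ends at $h_{n-1}g_n^+\in\Lambda G$. By Lemma~\ref{MorseLemma}, the geodesic rays from $y_0$ to $\gamma_+$ and to $h_{n-1}g_n^+$ both pass within $B$ of $h_{n-1}y_n$, and $d(y_0,h_{n-1}y_n)\to\infty$. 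Hence $h_{n-1}g_n^+\to\gamma_+$, and closedness of $\Lambda G$ finishes. A more direct version: $h_{n-1}y_n\in\gamma$ lies on the bi-infinite geodesic $h_{n-1}\gamma_n\subset\mathrm{CH}(\Lambda G)$, these points converge to $\gamma_+$, and $\overline{\mathrm{CH}(\Lambda G)}\cap\partial X=\Lambda G$.
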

\begin{proof}
    We first prove that the image lies in $\Lambda G$. Fix a sequence $(k_n)\in \mathcal{P}$. For each $n\in \mathbb{N}$, define
    $$
    \gamma_n=p_0\cup q_0\cup \cdots \cup q_{n-1}\cup (g_0^{k_0}\cdots g_{n-1}^{k_{n-1}})[y_n, g_n^+].
    $$
    Then $\gamma_n$ is also an $L$-local $(3, 2D_0)$-quasi-geodesic and hence a $(\lambda, C)$-quasi-geodesic. 
    Moreover, the positive endpoint $(\gamma_n)_+ = g_0^{k_0}\cdots g_{n-1}^{k_{n-1}}g_n^+$ belongs to $\Lambda G$.

    Since $(\gamma_n)_- = \gamma_-$ for $n\ge 1$, the visual distance between $(\gamma_n)_+$ and $\gamma_+$, as measured from $\gamma_-$, tends to $0$ as $n\to \infty$. 
    Thus, $\gamma_+\in \Lambda G$.

    It remains to show that $\gamma_+$ is not conical. 
    Note that the Hausdorff distance between $p_n$ and a subpath of a translate of $\gamma_n$ is at most $D_0$. 
    Since $d(\gamma_n, Go)\to\infty$, it follows that $d(p_n, Go)\to \infty$. 
    Because $q_n$ is a translate of $[x_n, y_{n+1}]$, Lemma~\ref{qnFarAway} implies $d(q_n, Go)\to\infty$. 
    Therefore, by the definition of conical points, $\gamma_+$ is non-conical.
\end{proof}

\begin{lemma}\label{projectiveNear}
For any $n\in \mathbb{N}$ we have
    $$
    \operatorname{diam}\bigl(\pi_{g_0^{k_0}\cdots g_{n-1}^{k_{n-1}}\gamma_n}( g_0^{k_0}\cdots g_{n}^{k_n}x_n)\cup \pi_{g_0^{k_0}\cdots g_{n-1}^{k_{n-1}}\gamma_n}(\gamma_+) \bigr)< 2D_0 + 200\delta.
    $$
\end{lemma}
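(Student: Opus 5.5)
By translating with $(g_0^{k_0}\cdots g_{n-1}^{k_{n-1}})^{-1}$, I will reduce to the following statement. Let $\gamma'$ be the translate of $\gamma$. Its portion after $p_n$ becomes $[y_n, g_n^{k_n}x_n]\cup g_n^{k_n}[x_n,y_{n+1}]\cup g_n^{k_n}(\text{later pieces})$. It suffices to show that $\pi_{\gamma_n}(g_n^{k_n}x_n)$ and $\pi_{\gamma_n}(\gamma'_+)$ are within $2D_0+200\delta$. Since $g_n^{k_n}x_n$ lies on the axis $g_n^{k_n}\gamma_n$, which is within Hausdorff distance $D_0$ of $\gamma_n$, the projection $\pi_{\gamma_n}(g_n^{k_n}x_n)$ lies within $2D_0$ of $g_n^{k_n}x_n$ itself. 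The content is therefore that the tail of $\gamma'$ from $g_n^{k_n}x_n$ onward does not travel along $\gamma_n$.

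Let $\beta$ denote the tail of $\gamma'$ starting at $z:=g_n^{k_n}x_n$. I will apply Corollary~\ref{projectionFellowTravel} contrapositively. If the projections of $z$ and $\gamma'_+$ were more than $24\delta$ apart, more precisely more than $2D_0+200\delta$, then the geodesic $[z,\gamma'_+]$ would contain a subpath $24\delta$-fellow-travelling a subsegment of $\gamma_n$ of length at least roughly $2D_0+150\delta$. Since $\beta$ is a $(\lambda,C)$-quasi-geodesic (it is a subpath of $\gamma'$), the Morse Lemma~\ref{MorseLemma} places $[z,\gamma'_+]$ within $B$ of $\beta$. Hence $\beta$ would contain points within $B+24\delta$ of $\gamma_n$ at a definite distance from $z$.

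Next I will locate such points of $\beta$. The first piece of $\beta$ is $g_n^{k_n}[x_n,y_{n+1}]$. By the choice of $x_n,y_{n+1}$ as closest points between $\gamma_n$ and $\gamma_{n+1}$, this bridge leaves the axis $g_n^{k_n}\gamma_n$ orthogonally in the coarse sense: points on it project to within a few $\delta$ of $g_n^{k_n}x_n$. Consequently its projection to $\gamma_n$ stays within about $2D_0+O(\delta)$ of $z$, using Lemma~\ref{finiteHausdorffDistanceProjection}. All subsequent pieces of $\beta$ lie on or near translates $h\gamma_m$ with $m>n$, or on bridges $q_m$ that start and end on such translates. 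By Lemma~\ref{contractingSystem}, $d(h\gamma_m,\gamma_n)>L+\lambda(2D_0+200\delta+B)+C$, which is far larger than $B+24\delta$. The bridges $q_m$ for $m>n$ are likewise far from $\gamma_n$, either because they fellow-travel $\beta$-pieces adjacent to far axes, or by a quasi-geodesic length count using the $\lambda(\cdot)+C$ term. So the only part of $\beta$ that can come close to $\gamma_n$ is an initial segment near $z$. Its projection has diameter below the threshold, which contradicts the fellow-travelling conclusion.

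The main obstacle is the quantitative control of the bridge $g_n^{k_n}[x_n,y_{n+1}]$ and of the later bridges. I must show that no point of the tail far from $z$ (in quasi-geodesic parameter) comes within $B+24\delta$ of $\gamma_n$. For the first bridge, I plan to use that $x_n$ is a nearest point of $\gamma_n$ to $y_{n+1}$, so $x_n\in\pi_{\gamma_n}(y_{n+1})$, combined with Lemma~\ref{geodesicContracting}. For the remaining tail I will argue via the quasi-geodesic inequality $\operatorname{Len}\le\lambda d+C$: a subpath running from $\gamma_n$ out to $g\gamma_{n+1}$ and then back near $\gamma_n$ would have length exceeding $2d(g\gamma_{n+1},\gamma_n)$ while its endpoints are close. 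This contradicts the separation constant in Lemma~\ref{contractingSystem}, which was chosen precisely to absorb $\lambda(2D_0+200\delta+B)+C$.
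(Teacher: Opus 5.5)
Your reduction and your handling of the first bridge are sound. Every point of $g_n^{k_n}[x_n,y_{n+1}]$ has $z$ as a nearest point on $g_n^{k_n}\gamma_n$, so by Lemma~\ref{finiteHausdorffDistanceProjection} its projection to $\gamma_n$ stays near $z$.

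\textbf{Where it breaks.} The argument fails when you compare $\beta$ with the geodesic $[z,\gamma'_+]$ issuing from $z$ itself. Write $z'\in\pi_{\gamma_n}(z)$ and $w'\in\pi_{\gamma_n}(\gamma'_+)$. Your contrapositive hypothesis allows $d(z',w')$ to be only slightly larger than $2D_0+200\delta$. Then the subpath $\sigma\subset[z,\gamma'_+]$ given by Corollary~\ref{projectionFellowTravel} lies within distance $d(z',w')+D_0+24\delta$ of $z$. The Morse step only places $\sigma$ within $B$ of $\beta$, and $B$ is not bounded in terms of $D_0$ and $\delta$. If $B$ exceeds that radius, the point $z\in\beta$ itself is within $B$ of all of $\sigma$, so the Morse lemma tells you nothing new. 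You therefore cannot conclude that $\beta$ comes near $\gamma_n$ ``at a definite distance from $z$.'' A point $u\in\beta$ with $\pi_{\gamma_n}(u)$ near $z'$ is fully compatible with $d(u,w')\le B+24\delta$ whenever $d(z',w')\lesssim 2B$. At best this route gives a bound of order $2B+O(D_0+\delta)$, not $2D_0+200\delta$.

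Your control of the later tail is also unjustified. Lemma~\ref{contractingSystem} separates translates of axes, not the bridges $q_m$ with $m>n$. Your length count assumes the tail returns near $\gamma_n$ close to $z$, but a return near $\gamma_n$ far from $z$ is exactly what must be excluded.

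\textbf{The missing idea.} Start the comparison geodesic far out along the first bridge, so that the Morse constant is absorbed by the quasi-geodesic inequality.
\begin{enumerate}
\item Put $T=\lambda(2D_0+200\delta+B)+C$ and $y=\beta(T)$. By Lemma~\ref{contractingSystem} the first bridge is longer than $T$, so $y$ lies on it and $\pi_{\gamma_n}(y)$ is still close to $\pi_{\gamma_n}(z)$.
\item Suppose $\pi_{\gamma_n}(\gamma'_+)$ were at least $2D_0+200\delta$ away from $\pi_{\gamma_n}(z)$. Then it is also far from $\pi_{\gamma_n}(y)$, and Corollary~\ref{projectionFellowTravel} forces $[y,\gamma'_+]$ to pass within $24\delta$ of $\pi_{\gamma_n}(y)$, hence close to $z$.
\item The Morse lemma applied to $\beta|_{[T,\infty)}$ then gives $t\ge T$ with $d(\beta(t),\beta(0))<2D_0+200\delta+B$. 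This contradicts $t\le\lambda d(\beta(t),\beta(0))+C$.
\end{enumerate}
This is the paper's argument, and it needs no information about the pieces of $\beta$ beyond the first bridge.
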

\begin{proof}
    Let $x:=g_0^{k_0}\cdots g_{n}^{k_n}x_n$ be the point on $\gamma$, and let $p: [0, \infty)\to X$ be the subray of $\gamma$ starting at $x$. Set $\eta = g_0^{k_0}\cdots g_{n-1}^{k_{n-1}}\gamma_n$.
    Since $p$ begins with the segment $g_0^{k_0}\cdots g_{n}^{k_n}q_n$, and $\operatorname{Len}(q_n)\ge\lambda(2D_0+200\delta+B)+C$ by Lemma~\ref{contractingSystem}, the point $y:=p(\lambda(2D_0+200\delta+B)+C)$ lies in $g_0^{k_0}\cdots g_{n}^{k_n}q_n$.
    By Lemma~\ref{finiteHausdorffDistanceProjection} we have $\operatorname{diam}\bigl(\pi_{\eta}(y)\cup \pi_{\eta}(x)\bigr)\le 2D_0+100\delta$.

    Assume for contradiction that $\operatorname{diam}\bigl(\pi_{\eta}(x)\cup \pi_{\eta}(\gamma_+) \bigr)\ge 2D_0 + 200\delta$.
    Then $\operatorname{diam}\bigl(\pi_{\eta}(y)\cup \pi_{\eta}(\gamma_+)\bigr)\ge 100\delta\ge 24\delta$.
    By Corollary~\ref{projectionFellowTravel}, there exists $z\in [y, \gamma_+]$ such that $d(z, \pi_{\eta}(y))\le 24\delta$.
    Hence $d(z, x)\le d(z, \pi_{\eta}(y))+\operatorname{diam}\bigl(\pi_{\eta}(y)\cup \pi_{\eta}(x)\bigr)\le 2D_0+200\delta$.
    By the definition of the constant $B$, we have 
    $$d_{\mathrm{Haus}}(p([\lambda(2D_0+200\delta+B)+C,\infty)), [y, \gamma_+])\le B.$$
    Hence there exists $t\ge\lambda(2D_0+200\delta+B)+C$ such that $d(p(t), z)< B$.
    Therefore, $d(p(t), x)< 2D_0+200\delta + B$ for some $t\ge \lambda(2D_0+200\delta+B)+C$.
    However, since $\gamma$ (and thus $p$) is a $(\lambda, C)$-quasi-geodesic, we have 
    $$
    t\le \lambda d(p(t), p(0))+C<\lambda(2D_0+200\delta+B)+C,
    $$
    a contradiction.
\end{proof}

We choose a maximal subset $\mathcal{Q}\subset \mathcal{P}$ with the property that for any two distinct sequences $(k_n)$ and $(k_n^\prime)$ in $\mathcal{Q}$ and every $n\in \mathbb{N}$, either $k_n=k_n^\prime$ or $\lvert k_n-k_n^\prime \rvert\tau(g_n)> 4D_0 + 600\delta$. Clearly, $\mathcal{Q}$ has the cardinality of the continuum.

\begin{lemma}\label{injective}
    The restriction $\Phi|_{\mathcal{Q}}$ is injective.
\end{lemma}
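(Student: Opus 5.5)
Take two distinct sequences $(k_n)\neq(k_n')$ in $\mathcal{Q}$ and let $n$ be the first index with $k_n\neq k_n'$. By the defining property of $\mathcal{Q}$,
$$\lvert k_n-k_n'\rvert\,\tau(g_n)>4D_0+600\delta.$$
The associated paths $\gamma$ and $\gamma'$ agree up to and including the initial point of $p_n$. Write $h:=g_0^{k_0}\cdots g_{n-1}^{k_{n-1}}$, which is the same element for both sequences, and consider the common geodesic line $\eta:=h\gamma_n$, an axis of $hg_nh^{-1}$. The plan is to show that $\gamma_+$ and $\gamma'_+$ have projections to $\eta$ that are far apart, hence $\gamma_+\neq\gamma'_+$.

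\textbf{Step 1: projections of the ends.} Lemma~\ref{projectiveNear} gives that $\pi_\eta(\gamma_+)$ lies within $2D_0+200\delta$ of $\pi_\eta(hg_n^{k_n}x_n)$. Similarly, $\pi_\eta(\gamma'_+)$ lies within $2D_0+200\delta$ of $\pi_\eta(hg_n^{k_n'}x_n)$; the proof of that lemma only used the segment starting at $hg_n^{k_n'}x_n$ and the quasi-geodesic property of $\gamma'$.

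\textbf{Step 2: separation along the axis.} The points $hg_n^{k_n}x_n$ and $hg_n^{k_n'}x_n$ lie on the axes $hg_n^{k_n}\gamma_n$ and $hg_n^{k_n'}\gamma_n$, each within Hausdorff distance $D_0$ of $\eta$. So each is within $D_0$ of its projection to $\eta$, and, as in Lemma~\ref{finiteHausdorffDistanceProjection}, the projections are coarsely well defined. Hence
$$d\bigl(\pi_\eta(hg_n^{k_n}x_n),\pi_\eta(hg_n^{k_n'}x_n)\bigr)\ge d\bigl(g_n^{k_n}x_n,g_n^{k_n'}x_n\bigr)-2D_0.$$
Now use $d(x,g^mx)\ge\lvert m\rvert\tau(g)$. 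This holds because the stable translation length is at least $\tau(g)$; alternatively, since $g_n^mx_n$ lies near the axis, one can argue via the Morse lemma along the axis. This yields the lower bound $\lvert k_n-k_n'\rvert\tau(g_n)-2D_0>2D_0+600\delta$. I expect the main obstacle to be exactly this step: justifying that $d(x_n,g_n^mx_n)\ge\lvert m\rvert\tau(g_n)$, or at least that the displacement is this large up to an additive constant absorbed by the $600\delta$ slack. The natural route is to write $d(x_n,g_n^mx_n)\ge\tau(g_n^m)$ and use $\tau(g^m)\ge\lvert m\rvert\tau(g)-c\delta$ for hyperbolic isometries. If that bound is not clean, I would instead pass through the points $g_n^jx_n$ projected to $\gamma_n$, which advance monotonically along the axis by at least $\tau(g_n)$ each, with errors controlled by Lemma~\ref{finiteHausdorffDistanceProjection}.

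\textbf{Step 3: conclusion.} Combining the two steps with the triangle inequality, the projections of $\gamma_+$ and $\gamma'_+$ to $\eta$ are at distance greater than
$$2D_0+600\delta-2(2D_0+200\delta)>0,$$
after adjusting constants if needed. Since $\pi_\eta$ is coarsely well defined on $\overline{X}\setminus\{\eta_\pm\}$ with diameter at most $8\delta$, this forces $\gamma_+\neq\gamma'_+$. We also need $\gamma_+,\gamma'_+\notin\{\eta_\pm\}$; this holds because both points are non-conical by Lemma~\ref{imageNonconical}, while $\eta_\pm$ are fixed points of a hyperbolic element and hence conical. Therefore $\Phi|_{\mathcal{Q}}$ is injective.
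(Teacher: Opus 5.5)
Your proposal follows the paper's proof. Fix the first index $n$ where the sequences differ and project both endpoints to the common line $\eta=h\gamma_n$. Lemma~\ref{projectiveNear} applies verbatim to $(k_n')$, since the prefix $h$ is shared, so it places $\pi_\eta(\gamma_+)$ and $\pi_\eta(\gamma'_+)$ near $\pi_\eta(hg_n^{k_n}x_n)$ and $\pi_\eta(hg_n^{k_n'}x_n)$; then separate those two. Your loss of $2D_0$ in Step~2 is the honest estimate; the paper's bound $4D_0+500\delta$ does not follow from Corollary~\ref{projectionFellowTravel}. Your check that $\gamma_+,\gamma'_+\notin\{\eta_\pm\}$ fills a point the paper leaves implicit.

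The step you single out is a genuine gap, and the paper has the same one: it just says ``by the defining property of $\mathcal{Q}$''. With $\tau$ the infimal displacement, neither $d(x,g^mx)\ge\lvert m\rvert\tau(g)$ nor your proposed $\tau(g^m)\ge\lvert m\rvert\tau(g)-c\delta$ holds in general.

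For a counterexample, take $T\times S^1$ with $T$ a locally finite tree and the circle of circumference $c$; this space is $\delta$-hyperbolic with $\delta\le c/2$. Let $g=(a,\text{half-turn})$ with $a$ a unit translation of $T$. Then $\tau(g)=\sqrt{1+c^2/4}$, while $d(x,g^{2j}x)=2j$ for $x$ on an axis. The defect grows linearly in $\lvert m\rvert$, so no additive slack absorbs it. Your fallback of following projections of $g_n^jx_n$ along the axis fails for the same reason, since it accumulates an error at each step.

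The repair is to use the stable translation length $\ell(g)=\lim_k d(x,g^kx)/k$. Subadditivity gives $d(x,g^mx)\ge\ell(g^m)=\lvert m\rvert\,\ell(g)$ exactly, for every $x$ and $m$, and $\ell(g_n)>0$ because $g_n$ is hyperbolic. So define $\mathcal{Q}$ by $\lvert k_n-k_n'\rvert\,\ell(g_n)>T$; this still gives continuum many sequences.

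Then choose $T$ so that your final gap $T-6D_0-400\delta$ exceeds $8\delta$, the diameter bound for $\pi_\eta$ of a single boundary point. Showing the gap is merely ``$>0$'' is not enough to force $\gamma_+\neq\gamma'_+$. With the paper's threshold $T=4D_0+600\delta$, your chain only yields $200\delta-2D_0$.
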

\begin{proof}
    Take two distinct sequences $(k_n)$ and $(k_n^\prime)$ in $\mathcal{Q}$, and let $N$ be the minimal index for which $k_N \neq k_N^\prime$. Let $\eta = g_0^{k_0}\cdots g_{N-1}^{k_{N-1}}\gamma_{N}$. By the defining property of $\mathcal{Q}$, we have
    \begin{align}
        d(g_0^{k_0}\cdots g_{N}^{k_{N}}x_N, g_0^{k_0}\cdots g_{N}^{k^\prime_{N}}x_N) > 4D_0 + 600\delta.\notag
    \end{align}
    By Corollary~\ref{projectionFellowTravel}, we obtain
    \begin{align}
        d(\pi_{\eta}(g_0^{k_0}\cdots g_{N}^{k_{N}}x_N), \pi_{\eta}(g_0^{k_0}\cdots g_{N}^{k^\prime_{N}}x_N))>4D_0+500\delta.\notag
    \end{align}
    By Lemma~\ref{projectiveNear}, we have
    $$
    \operatorname{diam} \bigl(\pi_{\eta}(\gamma_+)\cup \pi_{\eta}(\gamma^\prime_+) \bigr)>100\delta.
    $$
    Therefore, $\gamma_+ \ne \gamma^\prime_+$, i.e., $\Phi((k_n))\ne \Phi((k_n^\prime))$.
\end{proof}

\subsection{Proof of Theorem~\ref{geometricallyFiniteNonconicalCountable}}
With injectivity established for the map $\Phi|_{\mathcal{Q}}$, we complete the proof of the main theorem.

%\begin{proof}[Proof of Theorem \ref{geometricallyFiniteNonconicalCountable}]
    If the action of $G$ on $X$ is geometrically finite, then every non-conical point in $\Lambda G$ is parabolic, and therefore the set of such points is countable.

    If the action is geometrically infinite, then by Lemma~\ref{injective}, the set $\Lambda^{nc}G$ contains the image $\Phi(\mathcal{Q})$, which has the cardinality of the continuum. Hence, $\Lambda^{nc}G$ is uncountable.
%\end{proof}

\iffalse
\subsection{Proof of Corollary~\ref{geometricallyFiniteConvergenceGroup}}
Suppose the action of $G$ on a compact metrizable space $M$ is a convergence group action. Let 
$$
\Theta^3(M)=\{(x_1, x_2, x_3)\in M^3: x_1\ne x_2, x_2\ne x_3, x_3\ne x_1\}
$$
be the space of distinct triples of points in $M$.
Then the diagonal action of $G$ on $\Theta^3(M)$ is proper.
Moreover, there exists a compatible $\delta$-hyperbolic metric on $\Theta^3(M)$ for some $\delta\ge0$, and the induced action on the boundary coincides with the original convergence group action; see \cite[Corollary 7.7]{sun2019dynamicalcharacterizationacylindricallyhyperbolic}. Therefore, by Theorem~\ref{geometricallyFiniteNonconicalCountable}, the proof is complete.
\fi

\bibliographystyle{alpha}
\bibliography{bibfile}

\end{document}